\crefname{hypothesis}{Hypothesis}{Hypotheses}
\title{First-Kind Boundary Integral Equations for the Dirac Operator in 3D Lipschitz domains\thanks{First version submitted on December 30, 2020.
\funding{The work of Erick Schulz was supported by SNF as part of the grant 200021\_184848/1.}}}
\author{Erick Schulz\thanks{Seminar in Applied Mathematics, ETH, Zurich, Switzerland 
  (\email{erick.schulz@sam.math.ethz.ch}, \email{ralf.hiptmair@sam.math.ethz.ch}).}
\and Ralf Hiptmair\footnotemark[2]}
\def\BibTeX{{\rm B\kern-.05em{\sc i\kern-.025em b}\kern-.08em
		T\kern-.1667em\lower.7ex\hbox{E}\kern-.125emX}}
\pgfplotsset{compat=1.15}
\newcommand{\highlight}[1]{%
	\colorbox{black!10!white}{$\displaystyle#1$}}
\newtcolorbox{greyFrame}{
	colframe=black!20!white,
}
\definecolor{alizarin}{rgb}{0.82, 0.1, 0.26}
\definecolor{ao}{rgb}{0.0, 0.5, 0.0}
\newcommand\bl[1]{\mathbf{#1}}
\newcommand\bra[1]{\left(#1\right)}
\newcommand{\id}{\mathrm{Id}}
\newcommand\starequal{\stackrel{\mathclap{\normalfont\mbox{($*$)}}}{=}}
\newcommand\ip[2]{\langle #1, #2 \rangle}
\newtheorem*{warning}{Warning}
\DeclareFontFamily{OMX}{MnSymbolE}{}
\DeclareSymbolFont{MnLargeSymbols}{OMX}{MnSymbolE}{m}{n}
\DeclareFontShape{OMX}{MnSymbolE}{m}{n}{
	<-6>  MnSymbolE5
	<6-7>  MnSymbolE6
	<7-8>  MnSymbolE7
	<8-9>  MnSymbolE8
	<9-10> MnSymbolE9
	<10-12> MnSymbolE10
	<12->   MnSymbolE12
}{}
\DeclareFontShape{OMX}{MnSymbolE}{b}{n}{
	<-6>  MnSymbolE-Bold5
	<6-7>  MnSymbolE-Bold6
	<7-8>  MnSymbolE-Bold7
	<8-9>  MnSymbolE-Bold8
	<9-10> MnSymbolE-Bold9
	<10-12> MnSymbolE-Bold10
	<12->   MnSymbolE-Bold12
}{}
\let\llangle\@undefined
\let\rrangle\@undefined
\DeclareMathDelimiter{\llangle}{\mathopen}%
{MnLargeSymbols}{'164}{MnLargeSymbols}{'164}
\DeclareMathDelimiter{\rrangle}{\mathclose}%
{MnLargeSymbols}{'171}{MnLargeSymbols}{'171}
\def\Xint#1{\mathchoice
	{\XXint\displaystyle\textstyle{#1}}%
	{\XXint\textstyle\scriptstyle{#1}}%
	{\XXint\scriptstyle\scriptscriptstyle{#1}}%
	{\XXint\scriptscriptstyle\scriptscriptstyle{#1}}%
	\!\int}
\def\XXint#1#2#3{{\setbox0=\hbox{$#1{#2#3}{\int}$ }
		\vcenter{\hbox{$#2#3$ }}\kern-.6\wd0}}
\def\dashint{\Xint-}
\newcommand{\surfgrad}{\nabla_{\Gamma}}
\newcommand{\surfcurl}{\text{curl}_{\Gamma}}
\newcommand{\surfcurlbl}{\bl{curl}_{\Gamma}}
\newcommand{\LR}{\mathcal{L}_{\mathsf{R}}}
\newcommand{\LT}{\mathcal{L}_{\mathsf{T}}}
\newcommand{\extd}{\bl{d}}
\newcommand{\extdel}{\bm{\delta}}
\newcommand{\traceT}{\gamma_{\mathsf{T}}}
\newcommand{\traceR}{\gamma_{\mathsf{R}}}
\newcommand{\Dirac}{\mathsf{D}}
\newcommand{\vecU}{\vec{\bl{U}}}
\newcommand{\vecV}{\vec{\bl{V}}}
\newcommand{\vecb}{\vec{\bl{b}}}
\newcommand{\veca}{\vec{\bl{a}}}
\newcommand{\vecd}{\vec{\bl{d}}}
\newcommand{\vecc}{\vec{\bl{c}}}
\newcommand{\biT}{\mathcal{B}_{\mathsf{T}}}
\newcommand{\curl}{\mathbf{curl}}
\newcommand{\Hhalf}{H^{\frac{1}{2}}(\Gamma)}
\newcommand{\Hhalfcurl}{\bl{H}^{-\frac{1}{2}}(\text{curl}_{\Gamma},\Gamma)}
\newcommand{\HT}{\mathcal{H}_{\mathsf{T}}}
\newcommand{\HR}{\mathcal{H}_{\mathsf{R}}}
\newcommand{\Hcurl}{\bl{H}(\bl{curl},\Omega)}
\newcommand{\Hd}{\mathbf{H}(\bl{d},\Omega)}
\newcommand{\Hdnot}{\mathbf{H}_0(\bl{d},\Omega)}
\newcommand{\Hdelta}{\mathbf{H}(\bm{\delta},\Omega)}
\newcommand{\HD}{\bl{H}(\mathsf{D},\Omega)}
\newcommand{\intOmega}[1]{\int_{\Omega}#1\dif\bl{x}}
\begin{document}

\maketitle

\begin{abstract}
We develop novel first-kind boundary integral equations for Euclidean Dirac operators in 3D Lipschitz domains. They comprise square-integrable potentials and involve only weakly singular kernels. Generalized G{\aa}rding inequalities are derived and we establish that the obtained boundary integral operators are Fredholm of index zero. Their finite dimensional nullspaces are characterized and we show that their dimensions are equal to the number of topological invariants of the domain's boundary, in other words, to the sum of its Betti numbers. This is explained by the fundamental discovery that the associated bilinear forms agree with those induced by the 2D Dirac operators for surface de Rham Hilbert complexes whose underlying inner-products are the non-local inner products defined through the classical single-layer boundary integral operators for the Laplacian. Decay conditions for well-posedness in natural energy spaces of the Dirac system in unbounded exterior domains are also presented. 
\end{abstract}

\begin{keywords}
  Dirac, Hodge--Dirac, potential representation, representation formula, jump relations, first-kind boundary integral equations, coercive boundary integral equations
\end{keywords}

\begin{AMS}
  31A10, 45A05, 45E05, 45P05, 35F15, 34L40, 35Q61
\end{AMS}

\section{Introduction}
We develop first-kind boundary integral equations for the Hodge-Dirac operator in 3-dimensional Euclidean space
\begin{equation}\label{eq: Dirac decomposition}
    \mathsf{D}:=\bl{d}+\bm{\delta}:\bl{H}(\bl{d},\Omega^{\mp})\cap \bl{H}(\bm{\delta},\Omega^{\mp})\rightarrow L^2(\Omega^{\mp})^8,
\end{equation}
involving the exterior derivative and codifferential
\begin{align}
\bl{d}&:=\begin{pmatrix}
0                    & \bl{0}^\top                  & \bl{0}^\top                 & 0       \\
\highlight{\nabla} & \bl{0}_{3\times 3}                           & \bl{0}_{3\times 3}       &\bl{0}       \\
\bl{0}                   &  \highlight{\mathbf{curl}}& \bl{0}_{3\times 3}                       &\bl{0}    \\
0                    & \bl{0}^\top                            & \highlight{\text{div}} & 0
\end{pmatrix}
&\text{and} & & \bm{\delta}&:=
\begin{pmatrix}
0                    & \highlight{-\,\text{div}}                 & \bl{0}^\top                 & 0       \\
\bl{0} & \bl{0}_{3\times 3}                           & \highlight{\mathbf{curl}}      &\bl{0}       \\
\bl{0}                   &  \bl{0}_{3\times 3}  & \bl{0}_{3\times 3}                       &\highlight{-\nabla}    \\
0                    & \bl{0}^\top                            & \bl{0}^\top  & 0
\end{pmatrix}.
\end{align}
We are concerned with the partial differential equations $\mathsf{D}\vec{\bl{U}}=\vec{\bl{F}}$, which  in components $\vecU=\bra{U_0,\bl{U}_1,\bl{U}_2,U_3}^\top$ and $\vec{\bl{F}}=\bra{F_0,\bl{F}_1,\bl{F}_2,F_3}^\top$ read
\begin{equation}\label{eq: Dirac PDE explicit}
\begin{aligned}
-\,\text{div}\,\bl{U}_1 &= F_0,\\
\nabla\,U_0 +\mathbf{curl}\,\bl{U}_2 &= \bl{F}_1, \\
-\,\nabla U_3 +  \mathbf{curl}\,\bl{U}_1 & = \bl{F}_2,\\
\text{div}\,\bl{U}_2 &= F_3. 
\end{aligned}
\end{equation}

We will consider both interior and exterior boundary value problems, and assume that \eqref{eq: Dirac PDE explicit} is either posed on a bounded domain $\Omega^-$ having a Lipschitz boundary $\Gamma:=\partial\Omega^-$, or on the unbounded complement $\Omega^+:=\mathbb{R}^3\backslash\overline{\Omega^{-}}$. In the latter case, suitable decay conditions at infinity will be needed. Throughout, $\Omega\in\{\Omega^-,\Omega^+\}$.

\subsection{Related work}
	Current work discussing Dirac operators from the point of view of Hodge theory offers solutions to boundary value problems for \eqref{eq: Dirac PDE explicit} and related eigenvalue problems based on domain variational formulations \cite{leopardi2016abstract,christiansen2018eigenmode}.
	
	The operator matrix in \eqref{eq: Dirac decomposition} appears under a change of variables in the works of M.~Taskinen, S.~V{\"a}nsk{\"a} and  P.~Yl\"{a}-Oijala \cite{taskinen2006current,taskinen2007current, taskinen2007Picard} as R.~Picard's extended Maxwell operator. It was originally assembled by R.~Picard by combining the first-order Maxwell operator with the principal part of the equations of linear acoustics \cite{picard1984low,picard1985structural,leis2013initial}. In \cite{taskinen2006current,taskinen2007current, taskinen2007Picard}, Helmholtz-like boundary value problems for Picard's operator are studied with a focus on \emph{second-kind} boundary integral equations.

	Eigenvalue problems related to acoustic and electromagnetic scattering, that is transmission problems for the so-called perturbed Dirac operator, have also guided the study of second-kind boundary integral equations in the literature of harmonic and hypercomplex analysis. Important contributions were made in that direction by E.~Marmolejo-Olea, I.~Mitrea, M.~Mitrea, Q.~Shi \cite{marmolejo2012transmission}, A.~Axelsson, A.~Ros\'en and J.~Helsing  \cite{axelsson2006transmission,helsing2019dirac,rosengeometric}. There, the Dirac operator enters larger systems of equations that encompass or correspond to Maxwell's equations \cite{marmolejo2012transmission,helsing2019dirac}. An extensive body of work, created by these authors together with R. Grognard and J.~Hogan \cite{axelsson2001harmonic}, S.~Keith \cite{axelsson2006quadratic}, A.~McIntosh and S.~Monniaux \cite{mcintosh1999clifford,mcintosh2016hodge}, is devoted to the harmonic analysis of Dirac operators in $L_p$ spaces \cite{axelsson2012hodge,marmolejo2004harmonic}.

\subsection{Our contributions}
In this work, we derive novel \emph{first-kind} boundary integral equations for the Dirac equation $\mathsf{D}\vecU=0$ with suitable boundary and decay conditions. Two boundary integral operators are obtained and shown to satisfy generalized G{\aa}rding inequalities, making them Fredholm of index $0$. Their finite dimensional nullspaces are characterized in \Cref{sec: Kernels}, where we show that their dimension equals the number of topological invariants of the boundary---counted as the sum of its Betti numbers. Indeed, the integral representations of their associated bilinear forms turn out to be related to the variational formulations of the surface Dirac operators introduced in \Cref{surface dirac operators}. Recognizing these surface operators will simultaneously reveal how the boundary integral operators introduced in \Cref{Sec: boundary integral operators}, which are related to two different sets of boundary conditions, arise as ``rotated" versions of one another. The exterior representation formula of \Cref{lem: rep formula exterior domain} and the condition at infinity identified in \eqref{eq: decay condition lemma} eventually lead, together with the coercivity results of \Cref{sec: T-coercivity}, to well-posedness of Euclidean Dirac exterior boundary value problems in natural energy spaces in the complement of the finite dimensional nullspaces.

The new integral formulas display desirable properties: the surface potentials are
square-integrable and the kernels of the bilinear forms associated with the boundary
integral operators are merely weakly singular, i.e. they are bounded by
$\abs{x-y}^{-\alpha}$, $\alpha<2$, cf. \cite[Sec. 2.4]{kress1999linear}. Nevertheless, we
want to emphasize that the main result is the discovery that they relate to the
Hodge--Dirac operators of surface de Rham Hilbert complexes equipped with the non-local
inner products defined as the bilinear forms associated with the classical single-layer
potential for the Laplacian. As a consequence, we already know a lot about these
first-kind boundary integral operators for the Dirac operator. Moreover, this relationship
suggests that they are related to the first-kind boundary integral operators for the
Hodge--Laplacian.

For the sake of readability, we adopt the framework of classical vector analysis rather than exterior calculus. It is in this framework that the structural relationship between the following development and the standard theory for second-order elliptic operators seemed most explicit.

In summary, our main contributions are:
\begin{itemize}
	\item[$\triangleright$] We derive representation formulas for the Dirac equation posed on domains having a Lipschitz boundary by following the approach pioneered by M. Costabel \cite{costabel1988boundary}. The novelty here is to follow and extend the elegant strategy used in \cite{claeys2017first}---there used to find a representation formula for Hodge--Laplace and Helmholtz operators---that leads to potentials having simple explicit expressions. By adapting the arguments in the now classical monographs by W. McLean \cite[Chap. 7]{mclean2000strongly} and A. Sauter and C. Schwab \cite[Chap. 3]{sauter2010boundary}, we also establish an exterior representation formula. We will observe that the development of this theory is possible due to the strong structural similarity between integration by parts for the first-order Dirac operator and Green's second formula for second-order elliptic operators.
	\item[$\triangleright$] A sneak peek at the potentials presented in \eqref{eq: wedge potential} and \eqref{eq: vee potential} will already convince the reader that the approach we have adopted leads to simple formulas for the \emph{square-integrable} potentials involved in the representation formula. Some terms are recognizable from \cite{claeys2017first,claeys2018first}, while others occur in well-known theory for elliptic second-order operators. The simplicity that comes with the calculation procedure provided by \Cref{lem: int rep of boundary potential} allows for a straightforward analysis of their mapping and jump properties.
	\item[$\triangleright$] Given the previous items, it is not surprising that decay conditions at infinity for exterior boundary value problems posed on the unbounded domain $\Omega^+$ can be easily established by adapting the approach for second-order elliptic operators presented in \cite[Chap. 7]{mclean2000strongly}.
	\item[$\triangleright$] The crux of our calculations are the formulas \eqref{eq: wedge wedge bilinear form} and \eqref{eq: vee vee bilinear form} for the bilinear forms associated with the obtained weakly-singular first-kind boundary integral operators. We provide generalized G{\aa}rding inequalities for the two operators and characterize their null-spaces.
	\item[$\triangleright$] Our main discovery is presented in \Cref{surface dirac operators}, where we expose the relationship between these boundary integral operators and surface Dirac operators in an Hilbert complex framework.
\end{itemize} 

\section{Function spaces and traces}\label{Function spaces and traces}
As usual, $L^2(\Omega)$ and $\mathbf{L}^2(\Omega)$ denote the Hilbert spaces of complex square-integrable scalar and vector-valued functions defined over $\Omega$. We denote their inner products using round brackets, e.g. $(\cdot,\cdot)$. The spaces $H^1(\Omega)$ and $\mathbf{H}^1(\Omega)$ refer to the corresponding Sobolev spaces. The notation $C^\infty\bra{\Omega}$ is used for smooth functions. The subscript in $C^{\infty}_0\bra{\Omega}$ further specifies that these smooth functions have compact support in $\Omega$. $C^{\infty}(\overline{\Omega})$ is defined as
the space of uniformly continuous functions over Ω that have uniformly continuous
derivatives of all order. A subscript is used to identify spaces of locally integrable functions/vector  fields, e.g. $U\in L^2_{\text{loc}}(\Omega)$ if and only if $\varphi U$ is square-integrable for all $\varphi\in C^{\infty}_0(\mathbb{R}^3)$. We denote with an asterisk the spaces of functions with zero mean, e.g. $H^1_*(\Omega)$.

In general, given an operator $\mathsf{L}$ acting on square-integrable fields in the sense of distributions, we equip
\begin{equation}
\bl{H}\bra{\mathsf{L},\Omega}:=\{ \bl{U}\in\bra{\bl{L}^2\bra{\Omega}}^{\bullet}\Big\vert\,\mathsf{L}\bl{U}\in\bra{\bl{L}^2\bra{\Omega}}^{\dag}\}
\end{equation}
with the natural graph norm, where $\bullet=8$ or $3$ and $\dag=8$, $3$ or $1$. Important specimens are
\begin{align}
\mathbf{H}(\text{div}, \Omega) &:=\left\{\mathbf{U}\in \bra{L^2(\Omega)}^3 \,\vert\, \text{div}\,\mathbf{U}\in L^2(\Omega)\right\},\\
\mathbf{H}(\mathbf{curl}, \Omega) &:=\left\{\mathbf{U}\in \bra{L^2(\Omega)}^3 \,\vert\, \mathbf{curl}\,\mathbf{U}\in \bra{L^2(\Omega)}^3\right\}.
\end{align}
Of course, in all of the above definitions, $\Omega$ can be replaced by $\mathbb{R}^3$, or any other domain. We understand restrictions in the sense of distributions when working with domains having disconnected components. For example, in line with the above notation we mean in particular
\begin{equation}
\bl{H}\bra{\mathsf{D},\mathbb{R}^3\backslash\Gamma} := \bl{H}\bra{\mathsf{D},\Omega}\times \bl{H}\bra{\mathsf{D},\mathbb{R}^3\backslash\Omega} \subset \bra{L^2(\mathbb{R}^3)}^8.
\end{equation}

We use a prime superscript to denote dual spaces, for instance $C^{\infty}_0\bra{\Omega}'$ is the space of distributions in $\Omega$. Angular brackets indicate duality pairings, e.g. $\langle\cdot,\cdot\rangle_{\Omega}$ or $\llangle\cdot,\cdot\rrangle_{\Gamma}$. The former will be used for domain-based quantities in $\Omega$, while the latter will pair spaces on $\Gamma$.

Trace-related theory for Lipschitz domains can be found in \cite{buffa2001traces_a,buffa2001traces_b,buffa2002traces} and \cite{mclean2000strongly,girault2012finite}, where it is established that the traces
\begin{subequations}\label{ed: traces smooth}
	\begin{align}
	\gamma W&:=\bl{W}\big\vert_{\Gamma}, &&\forall\, W\in C^{\infty}(\overline{\Omega}),\\
	\gamma_n\bl{W}&:=\gamma\bl{W}\cdot\bl{n}, &&\forall\,\bl{W}\in \mathbf{C}^{\infty}(\overline{\Omega}),\label{eq: def neumann trace}\\
	\gamma_{\tau}\bl{W}&:=\gamma\bl{W}\times\bl{n}, &&\forall\,\bl{W}\in \mathbf{C}^{\infty}(\overline{\Omega}),\label{eq: def tau trace}\\
	\gamma_{t}\bl{W}&:=\mathbf{n}\times\bra{\gamma_{\tau}\bl{W}}, &&\forall\,\bl{W}\in \mathbf{C}^{\infty}(\overline{\Omega}),\label{eq: def t trace}
	\end{align}
\end{subequations}
extend to continuous and surjective linear operators
\begin{subequations}\label{eq:extension interior traces}
\begin{align}
	\gamma &: H^1\bra{\Omega}\rightarrow H^{1/2}\bra{\Gamma}, &&\text{\cite[Thm. 4.2.1]{Hsiao2008}}\\
	\gamma_n &: \mathbf{H}(\text{div}, \Omega)\rightarrow H^{-1/2}\bra{\Gamma},&&\text{\cite[Thm. 2.5, Cor. 2.8]{girault2012finite}}\\
	\gamma_{\tau}&:\mathbf{H}\bra{\mathbf{curl},\Omega} \rightarrow \mathbf{H}^{-1/2}(\text{div}_\Gamma,\Gamma),&&\text{\cite[Thm. 4.1]{buffa2002traces}}\\
	\gamma_{t}&:\mathbf{H}\bra{\mathbf{curl},\Omega} \rightarrow \mathbf{H}^{-1/2}(\text{curl}_\Gamma,\Gamma), &&\text{\cite[Thm. 4.1]{buffa2002traces}}
\end{align}
\end{subequations}
with nullspaces 
\begin{align}
    H^1_0(\Omega)&:=\overline{C_0^{\infty}(\Omega)}^{H^1(\Omega)}=\ker \gamma, &&\text{\cite[Thm 3.40]{mclean2000strongly}}\\
    \mathbf{H}_0(\text{div}, \Omega)&:=\overline{C_0^{\infty}(\Omega)^3}^{\mathbf{H}\bra{\text{div},\Omega}}=\ker \gamma_n, && \text{\cite[Thm. 3.25]{monk2003finite}}\\
    \mathbf{H}_0(\mathbf{curl}, \Omega)&:=\overline{C_0^{\infty}(\Omega)^3}^{\mathbf{H}\bra{\mathbf{curl},\Omega}}=\ker \gamma_\tau=\ker\gamma_t. && \text{\cite[Thm. 3.33]{monk2003finite}}
\end{align}
Here, $\mathbf{n}\in\mathbf{L}^{\infty}(\Gamma)$ is the essentially bounded unit normal vector field on $\Gamma$ directed toward the exterior of $\Omega^-$. Detailed definitions can be found in \cite{buffa2001traces_a,buffa2001traces_b,buffa2002traces} together with a study of the involved surface differential operators. Short practical summaries are also provided in \cite{buffa2003galerkin, claeys2017first, schulz2020coupled,kirchhart2020div}.

Similarly as for the Hodge--Laplace operator \cite{claeys2017first, claeys2018first, schulz2020coupled,schulz2020spurious}, a theory of boundary value problems for the Hodge--Dirac problem in three dimensions entails partitioning our collection of traces into two ``dual" pairs. Accordingly, we assemble the traces into 
	\begin{align}\label{eq: little wedge and vee mappings}
	&{\color{blue}\gamma_{\mathsf{T}}\bra{\vec{\bl{U}}} :=
		\begin{pmatrix}
		\gamma\bra{U_0} \\ \,\gamma_{t}\bra{\bl{U}_1} \\ \gamma_n\bra{\bl{U}_2}
		\end{pmatrix}}
	&\text{and}&
	&{\color{red}\gamma_{\mathsf{R}}\bra{\vec{\bl{U}}} :=
		\begin{pmatrix}
		\gamma_n\bra{\bl{U}_1} \\\gamma_{\tau}\bra{\bl{U}2} \\ \,\gamma\bra{U_3}
		\end{pmatrix}}.
	\end{align}
	
\begin{warning}
	We want to highlight that in spite of the notation, $\gamma_{\mathsf{T}}$ and $\gamma_{\mathsf{R}}$ are \textbf{not} defined as in \cite{claeys2017first}, \cite{claeys2018first} and related work.
\end{warning}
The trace spaces
\begin{subequations}
	\begin{align}
	{\color{blue}\mathcal{H}_{\mathsf{T}}}&{\color{blue}:= H^{1/2}\bra{\Gamma}\times\mathbf{H}^{-1/2}(\text{curl}_\Gamma,\Gamma) \times H^{-1/2}\bra{\Gamma}},\\
	{\color{red}\mathcal{H}_{\mathsf{R}}}&{\color{red}:= H^{-1/2}\bra{\Gamma}\times\mathbf{H}^{-1/2}(\text{div}_\Gamma,\Gamma) \times H^{1/2}\bra{\Gamma}},
	\end{align}
\end{subequations}
are dual to each other with respect to the $L^2\bra{\Gamma}$ duality pairing (c.f. \cite[Lem. 5.6]{buffa2002traces}). In this sense, we can identify
\begin{align}\label{eq: duality of trace spaces}
\HT'=\HR &&\text{and} &&\HR'=\HT.
\end{align}

Naturally, the traces can also be taken from the exterior domain. The extensions
\eqref{eq:extension interior traces} will be tagged with a minus subscript (only when
required to avoid confusion), e.g. $\gamma^-$, to distinguish them from the extensions
obtained from \eqref{ed: traces smooth} by replacing $\Omega$ with
$\Omega^+:=\mathbb{R}^3\backslash\overline{\Omega}$, which we will label with a plus
superscript, e.g. $\gamma^+$.
\begin{lemma}[{See \cite[Lem. 6.4]{claeys2017first}}]\label{lem: traces are continuous and surjective}
	The linear mappings
	\begin{align}
	&\gamma^{\pm}_{\mathsf{T}}:\bl{H}_{\emph{\text{loc}}}\bra{\mathsf{D},\Omega^{\pm}}\rightarrow \mathcal{H}_{\mathsf{T}},&
	&\gamma^{\pm}_{\mathsf{R}}:\bl{H}_{\emph{\text{loc}}}\bra{\mathsf{D},\Omega^{\pm}}\rightarrow \mathcal{H}_{\mathsf{R}},
	\end{align}
	defined  by \eqref{eq: little wedge and vee mappings} are continuous and surjective. There exist continuous lifting maps $\mathcal{E}_{\mathsf{T}}:\mathcal{H}_{\mathsf{T}}\rightarrow\bl{H}_{\emph{\text{loc}}}(\mathsf{D},\mathbb{R}^3\backslash\Gamma)$ and $\mathcal{E}_{\mathsf{R}}:\mathcal{H}_{\mathsf{R}}\rightarrow\bl{H}_{\emph{\text{loc}}}(\mathsf{D},\mathbb{R}^3\backslash\Gamma)$ such that $\gamma_\mathsf{T}\circ\mathcal{E}_{\mathsf{T}} = \id$ and $\gamma_\mathsf{R}\circ\mathcal{E}_{\mathsf{R}} = \id$.
\end{lemma}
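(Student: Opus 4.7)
The plan is to reduce everything to the individual continuous and surjective trace maps listed in \eqref{eq:extension interior traces}. Since $\HD = \Hd \cap \Hdelta$ by definition, any $\vecU = (U_0, \bl{U}_1, \bl{U}_2, U_3)^\top \in \HD$ automatically satisfies $U_0, U_3 \in \Hone$ and $\bl{U}_1, \bl{U}_2 \in \Hcurl \cap \Hdiv$, with the component norms all controlled by the graph norm of $\mathsf{D}$. Continuity of $\traceT$ and $\traceR$ then follows componentwise from \eqref{eq:extension interior traces}, since $\HT$ (resp.\ $\HR$) is exactly the product of the ranges of the relevant individual traces. The argument is identical on $\Omega^+$ once one reads the trace results locally.

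For surjectivity, I would construct a lifting $\vecU = (U_0, \bl{U}_1, \bl{U}_2, 0)^\top$ of $(\xi, \bl{\zeta}, \eta) \in \HT$ component by component, assembling standard right inverses. The scalar part $U_0 \in \Hone$ comes from the Dirichlet lifting $\Hhalf \to \Hone$. For the normal-trace part, I would take $\bl{U}_2 := \nabla \psi$, where $\psi \in \Hone$ solves a Neumann problem $-\Delta\psi = c$ in $\Omega$, $\partial_n \psi = \eta$ on $\Gamma$, with the constant $c$ chosen so that $c|\Omega| + \llangle \eta, 1 \rrangle_\Gamma = 0$; then $\mathbf{curl}\,\bl{U}_2 = 0$ and $\text{div}\,\bl{U}_2 = -c \in \Ltwo$, so $\bl{U}_2 \in \Hcurl \cap \Hdiv$ with $\gamma_n \bl{U}_2 = \eta$.

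The only nontrivial step, and the expected main obstacle, is constructing $\bl{U}_1 \in \Hcurl \cap \Hdiv$ with prescribed tangential trace $\bl{\zeta} \in \Hhalfcurl$: the individual lifting provided by \eqref{eq:extension interior traces} only guarantees $\Hcurl$ membership. The plan is to first obtain $\tilde{\bl{U}}_1 \in \Hcurl$ with $\gamma_t \tilde{\bl{U}}_1 = \bl{\zeta}$ from that lifting, and then apply a curl-free correction to enforce divergence-freeness: solve $\Delta \phi = \text{div}\,\tilde{\bl{U}}_1 \in H^{-1}(\Omega)$ for $\phi \in \Honenot$ (uniquely solvable by the Poincar\'e inequality) and set $\bl{U}_1 := \tilde{\bl{U}}_1 - \nabla\phi$. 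The correction leaves $\mathbf{curl}\,\bl{U}_1$ unchanged, kills the divergence, and preserves the tangential trace because $\gamma\phi = 0$ forces $\gamma_t(\nabla\phi) = \surfgrad(\gamma \phi) = 0$. Every intermediate operator is linear and bounded, so their composition is a continuous right inverse of $\traceT$ into $\HD$. The construction for $\traceR$ is structurally identical after swapping $\gamma_n \leftrightarrow \gamma$ and $\gamma_t \leftrightarrow \gamma_\tau$, with the corresponding spaces pairing naturally via the duality \eqref{eq: duality of trace spaces}.

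For the global lifting $\mathcal{E}_{\mathsf{T}} : \HT \to \bl{H}_{\text{loc}}(\mathsf{D}, \mathbb{R}^3 \setminus \Gamma)$, I would run the same construction independently on $\Omega^-$ and on a bounded shell about $\Gamma$ inside $\Omega^+$, localizing the exterior piece with a cut-off so that it belongs to $\bl{H}_{\text{loc}}(\mathsf{D}, \Omega^+)$, and then paste the two pieces across $\Gamma$. This simultaneously establishes surjectivity of both $\gamma^\pm_{\mathsf{T}}$. Once the gradient-correction device of the previous paragraph is available, the rest is a bookkeeping exercise in composing continuous operators.
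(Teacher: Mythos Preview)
The paper does not prove this lemma; it simply cites \cite[Lem.~6.4]{claeys2017first}. Your argument is a genuine constructive proof, and the core device---lifting $\bl{\zeta}\in\Hhalfcurl$ first into $\Hcurl$ and then correcting by $\nabla\phi$ with $\phi\in H^1_0$ solving $\Delta\phi=\text{div}\,\tilde{\bl U}_1$ to land in $\Hcurl\cap\Hdiv$ without disturbing $\gamma_t$---is exactly the right idea and is carried out correctly.

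There is one point you should not gloss over. You write ``$\HD=\Hd\cap\Hdelta$ by definition'', but under the paper's general convention $\bl{H}(\mathsf{L},\Omega):=\{\bl U:\mathsf{L}\bl U\in L^2\}$ this is \emph{not} definitional: $\mathsf{D}\vecU\in L^2$ gives only $\nabla U_0+\mathbf{curl}\,\bl U_2\in L^2$ and $\mathbf{curl}\,\bl U_1-\nabla U_3\in L^2$, not each summand separately. On $\mathbb{R}^3$ the equality does hold (in Fourier variables $\xi\hat U_0\perp\xi\times\hat{\bl U}_2$, so the mixed terms decouple), and the paper's own \eqref{eq: Dirac decomposition} explicitly takes $\Hd\cap\Hdelta$ as the domain, so your reading is consistent with the authors' intent; but you should either invoke \eqref{eq: Dirac decomposition} explicitly or supply the decoupling argument rather than calling it a definition. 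Once this identification is in hand, your continuity argument and the componentwise lifting construction go through as written.
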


\begin{lemma}[{See \cite[Lem. 6.4]{claeys2017first}}]\label{lem: properties of surface div and curl}
	The surface divergence extends to a continuous surjection $\text{\emph{div}}_{\Gamma}:\mathbf{H}^{-1/2}(\text{\emph{div}}_\Gamma,\Gamma)\rightarrow H^{-1/2}_*(\Gamma)$, while $\mathbf{curl}_{\Gamma}:H^{1/2}_*\rightarrow\mathbf{H}^{-1/2}(\text{\emph{div}}_\Gamma,\Gamma)$ is a bounded injection with closed range such that $\mathbf{curl}_{\Gamma}\xi=\nabla_{\Gamma}\xi\times\mathbf{n}$ for all $\xi\in H^{1/2}(\Gamma)$. These operators satisfy $\text{\emph{div}}_{\Gamma}\circ\mathbf{curl}_{\Gamma} = 0$.
\end{lemma}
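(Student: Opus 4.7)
The plan is to follow the standard theory of surface differential operators on Lipschitz boundaries developed by Buffa, Ciarlet and Costabel, adapting \cite[Lem. 6.4]{claeys2017first} and \cite[Sec. 5]{buffa2002traces} to the present notation. I would address the four assertions in sequence: continuity of $\surfdiv$, surjectivity of $\surfdiv$ onto $\Hstarminushalf$, the properties of $\surfcurlbl$, and finally the vanishing composition.

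Continuity of $\surfdiv\colon\Hhalfdiv\to\Hminushalf$ is essentially tautological, since the norm on $\Hhalfdiv$ is by construction the graph norm making this map bounded. To refine the range to $\Hstarminushalf$, I would exploit the characterization
\begin{equation*}
\llangle\surfdiv\bl{v},\varphi\rrangle_{\Gamma}=-\llangle\bl{v},\surfgrad\varphi\rrangle_{\Gamma}\qquad\forall\,\varphi\in\Hhalf,
\end{equation*}
apply it with the constant function $\varphi\equiv 1$ (a legitimate $\Hhalf$ function since $\Gamma$ is compact), and use $\surfgrad 1=0$ to conclude that $\surfdiv\bl{v}$ annihilates constants. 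For surjectivity, given $\phi\in\Hstarminushalf$ I would solve the Laplace--Beltrami variational problem of finding $u\in\Hstarhalf$ with $(\surfgrad u,\surfgrad\varphi)_{\Gamma}=-\llangle\phi,\varphi\rrangle_{\Gamma}$ for all $\varphi\in\Hstarhalf$; well-posedness is supplied by the surface Poincar\'e inequality and Lax--Milgram on the closed Lipschitz surface. Setting $\bl{v}:=\surfgrad u$ yields a preimage in $\Hhalfdiv$ with $\surfdiv\bl{v}=\phi$, and continuity of a right-inverse follows from the open mapping theorem.

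Regarding $\surfcurlbl$, for smooth $\xi\in C^{\infty}(\overline{\Omega})$ the identity $\surfcurlbl\xi=\surfgrad\xi\times\bl{n}$ holds by direct classical calculation, and extends to all of $\Hhalf$ by density together with the boundedness of rotation by $\bl{n}$ on tangential trace spaces (a consequence of $\bl{n}\in\bl{L}^{\infty}(\Gamma)$ combined with the duality pairings of \cite[Lem. 5.6]{buffa2002traces}). The composition $\surfdiv\circ\surfcurlbl=0$ then follows from the surface calculus identity $\surfdiv(\bl{w}\times\bl{n})=-\surfcurl\bl{w}$ applied to $\bl{w}=\surfgrad\xi$, in conjunction with $\surfcurl\circ\surfgrad=0$; this simultaneously certifies that $\surfcurlbl$ takes values in $\Hhalfdiv$ with bounded norm. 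Injectivity on $\Hstarhalf$ then proceeds by noting that $\surfcurlbl\xi=0$ forces $\surfgrad\xi\times\bl{n}=0$, which, since $\surfgrad\xi$ is already tangential and rotation by $\bl{n}$ is an isomorphism on tangent planes, yields $\surfgrad\xi=0$; the surface Poincar\'e inequality on $\Hstarhalf$ then finishes the job, and closed range follows from the resulting a priori estimate via the bounded inverse theorem.

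The main obstacle I anticipate is that the Lipschitz regularity of $\Gamma$ prohibits naive manipulations in local coordinates; every identity involving $\surfgrad$, $\surfdiv$, or rotation by $\bl{n}$ must be interpreted through the duality/trace-space framework of \cite{buffa2001traces_a,buffa2001traces_b,buffa2002traces}, and the vector identity $\surfdiv(\bl{w}\times\bl{n})=-\surfcurl\bl{w}$ must be lifted from smooth functions by density. A secondary subtlety concerns $\Gamma$ with several connected components: a function constant on each component but of total mean zero lies in $\Hstarhalf\setminus\{0\}$ yet is annihilated by $\surfcurlbl$, so strict injectivity on $\Hstarhalf$ ultimately requires either connectedness of $\Gamma$ or a componentwise reformulation of the mean-zero condition.
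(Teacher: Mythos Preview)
The paper does not supply its own proof of this lemma; it is stated with a bare citation to \cite[Lem.~6.4]{claeys2017first} and no argument follows. There is therefore nothing in the paper to compare your sketch against, but your outline is essentially the standard route through the Buffa--Costabel theory and is sound in spirit.

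Two technical points deserve attention. First, your surjectivity step poses the Laplace--Beltrami problem in $H^{1/2}_*(\Gamma)$, but the correct variational space is $H^1_*(\Gamma)$: the surface gradient $\surfgrad$ maps $H^1(\Gamma)$ into $L^2_t(\Gamma)$, which is what gives meaning to the $L^2$ pairing $(\surfgrad u,\surfgrad\varphi)_\Gamma$. Once $u\in H^1_*(\Gamma)$ solves $-\Delta_\Gamma u=\phi$, the field $\bl{v}=-\surfgrad u$ lies in $\Hhalfdiv$ because $\surfdiv\bl{v}=-\Delta_\Gamma u=\phi\in H^{-1/2}(\Gamma)$ by construction; so the argument goes through after this correction. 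Second, your closing caveat about disconnected $\Gamma$ is entirely justified: injectivity of $\surfcurlbl$ on $H^{1/2}_*(\Gamma)$ as stated fails when $\Gamma$ has several components, since locally constant functions of global mean zero are annihilated. The paper does not address this and simply defers to the cited reference; in practice the lemma is used only through the existence of the pseudo-inverses $(\surfdiv)^\dag$ and $(\surfcurlbl)^\dag$ in \Cref{sec: T-coercivity}, where the componentwise interpretation is the one that matters.
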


\begin{lemma}\label{eq: IBP weakform}
	For all $\vecU\in\bl{H}(\bl{d},\Omega^\mp)$ and $\vecV\in\bl{H}(\bm{\delta},\Omega^\mp)$,
	\begin{equation}
	\int_{\Omega^\mp} \bl{d}\vecU\cdot\vecV\dif\bl{x}= \int_{\Omega^\mp}\vecU\cdot\bm{\delta}\vecV\dif\bl{x} \pm\llangle\gamma_{\mathsf{T}}\vecU,\gamma_{\mathsf{R}}\vecV\rrangle_{\Gamma}.
	\end{equation}
\end{lemma}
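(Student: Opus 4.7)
The plan is to establish the identity first for smooth fields by component-wise classical Green's identities, and then to extend it by density; the exterior case will follow from the interior version by a truncation argument.

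First, for smooth $\vecU, \vecV \in C^{\infty}(\overline{\Omega^-})^8$, I would expand $\bl{d}\vecU \cdot \vecV$ and $\vecU \cdot \bm{\delta}\vecV$ using the explicit block forms of $\bl{d}$ and $\bm{\delta}$. Their difference splits into three decoupled pieces: a $\nabla$--$\Div$ pair coupling $U_0$ with $\bl{V}_1$, a $\curl$--$\curl$ pair coupling $\bl{U}_1$ with $\bl{V}_2$, and a $\Div$--$\nabla$ pair coupling $\bl{U}_2$ with $V_3$. Integrating over $\Omega^-$ and applying in turn the three classical Green identities of vector calculus produces, respectively, the boundary contributions $\llangle \gamma U_0, \gamma_n \bl{V}_1 \rrangle_{\Gamma}$, $\llangle \gamma_t \bl{U}_1, \gamma_\tau \bl{V}_2 \rrangle_{\Gamma}$, and $\llangle \gamma_n \bl{U}_2, \gamma V_3 \rrangle_{\Gamma}$. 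Recognising that this sum is precisely $\llangle \gamma_{\mathsf{T}} \vecU, \gamma_{\mathsf{R}} \vecV \rrangle_{\Gamma}$ under the duality pairing \eqref{eq: duality of trace spaces} and the definitions of $\gamma_{\mathsf{T}}, \gamma_{\mathsf{R}}$ in \eqref{eq: little wedge and vee mappings} will settle the smooth interior case.

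Next, I would extend the identity to all of $\bl{H}(\bl{d}, \Omega^-) \times \bl{H}(\bm{\delta}, \Omega^-)$ by a density argument. The block structure of $\bl{d}$ shows that a field in $\bl{H}(\bl{d}, \Omega^-)$ has its components in $H^1(\Omega^-) \times \bl{H}(\curl, \Omega^-) \times \bl{H}(\Div, \Omega^-) \times L^2(\Omega^-)$, with the dual situation for $\bl{H}(\bm{\delta}, \Omega^-)$. Each of these component spaces admits $C^{\infty}(\overline{\Omega^-})$ as a dense subspace in its graph norm over a Lipschitz domain, so density of $C^{\infty}(\overline{\Omega^-})^8$ in the graph norms of $\bl{H}(\bl{d}, \Omega^-)$ and of $\bl{H}(\bm{\delta}, \Omega^-)$ will follow. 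Both sides of the identity are continuous in $(\vecU, \vecV)$: the volume terms via the $L^2$ Cauchy--Schwarz inequality, and the boundary pairing via continuity of $\gamma_{\mathsf{T}}, \gamma_{\mathsf{R}}$ from Lemma \ref{lem: traces are continuous and surjective} together with the duality on $\HT \times \HR$ given by \eqref{eq: duality of trace spaces}. The identity therefore passes to the limit.

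For the exterior case on $\Omega^+$, I would intersect with a large open ball $B_R$ containing $\overline{\Omega^-}$ and apply the interior identity on the bounded Lipschitz domain $\Omega^+ \cap B_R$. The outward unit normal to this intersection along $\Gamma$ is $-\bl{n}$, which accounts for the $-$ sign prefacing the $\Gamma$ pairing in the exterior version. The extra contribution from $\partial B_R$ is pointwise controlled by $|\vecU||\vecV|$; since $\int_0^{\infty} \|\vecU\|_{L^2(\partial B_R)}^2 \dif R \leq \|\vecU\|_{L^2(\Omega^+)}^2 < \infty$ and likewise for $\vecV$, Cauchy--Schwarz yields a sequence $R_n \to \infty$ along which the sphere integral vanishes, and passing to this limit will complete the proof. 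The only delicate point throughout is bookkeeping the signs: the minus signs built into $\bm{\delta}$ must be tracked carefully so that they combine with the sign conventions of the three classical Green identities to reassemble exactly the pairing $\llangle \gamma_{\mathsf{T}} \vecU, \gamma_{\mathsf{R}} \vecV \rrangle_{\Gamma}$---this is precisely the structural compatibility alluded to in the paper's introduction between first-order Dirac integration by parts and Green's second formula for second-order elliptic operators.
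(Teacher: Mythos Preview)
Your approach is correct and matches the paper's: expand $\bl{d}\vecU\cdot\vecV$ component-wise and apply the three classical Green identities (for the grad--div, curl--curl, and div--grad pairs), then recognise the resulting boundary terms as $\llangle\gamma_{\mathsf{T}}\vecU,\gamma_{\mathsf{R}}\vecV\rrangle_{\Gamma}$. The paper's version is terser---it applies these Green identities directly in their weak form on the graph spaces (where they are already established, cf.\ the references around \eqref{eq:extension interior traces}), so your separate density and exterior-truncation steps are not spelled out there.
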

\begin{proof}
	We integrate by parts using Green's identities to obtain
	\begin{align*}
	\int_{\Omega^\mp}\bl{d}\bl{U}\cdot\bl{V}\dif\bl{x}
	&=\int_{\Omega^\mp}\nabla U_0\cdot\bl{V}_1\dif\bl{x} + \int_{\Omega^\mp}\bl{curl}\,\bl{U}_1\cdot\bl{V}_2\dif\bl{x} +\int_{\Omega^\mp}\bra{\text{div}\,\bl{U}_2}V_3\dif\bl{x}\\
	&=-\int_{\Omega^\mp}\bl{U}_0\,\bra{\text{div}\,\bl{V}_1}\dif\bl{x}
	+\int_{\Omega^\mp}\bl{U}_1\cdot\bl{curl}\,\bl{V}_2\dif\bl{x}
	-\int_{\Omega^\mp}\bl{U}_2\cdot\nabla V_3\dif\bl{x}\\
	&\qquad+\ip{\gamma U_0}{\gamma_n\bl{V}_1}_{\Gamma}
	+\ip{\gamma_t\bl{U}_1}{\gamma_{\tau}\bl{V}_2}_{\Gamma}
	+\ip{\gamma_n\bl{U}_2}{\gamma V_3}_{\Gamma}\\
	&=\int_{\Omega^\mp}\bl{U}\cdot\bm{\delta}\bl{V}\dif\bl{x}
	+\llangle\gamma_{\mathsf{T}}\bl{U},\gamma_{\mathsf{R}}\bl{V}\rrangle_{\Gamma}.
	\end{align*}
\end{proof}

\begin{corollary}[Green's formula for Dirac operator]
  \label{cor. second green identity}
	For all $\vec{\bl{U}},\vec{\bl{V}}\in\bl{H}(\mathsf{D},\Omega^\mp)$, we have
	\begin{equation}\label{eq: IBP Dirac}
	\int_{\Omega^\mp}\mathsf{D}\vec{\bl{U}}\cdot\vec{\bl{V}}\dif\bl{x} = \int_{\Omega^\mp}\vec{\bl{U}}\cdot\mathsf{D}\vec{\bl{V}}\dif\bl{x}\pm \llangle\gamma_{\mathsf{T}}\vec{\bl{U}},\gamma_{\mathsf{R}}\vec{\bl{V}}\rrangle_{\Gamma} \mp
	\llangle\gamma_{\mathsf{T}}\vec{\bl{V}},\gamma_{\mathsf{R}}\vec{\bl{U}}\rrangle_{\Gamma}.
	\end{equation}
\end{corollary}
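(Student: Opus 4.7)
The plan is to derive the corollary by applying Lemma \ref{eq: IBP weakform} twice, once to the pair $(\vecU,\vecV)$ and once to the swapped pair $(\vecV,\vecU)$, and then combining the resulting identities. The key observation is that $\mathsf{D}=\bl{d}+\bm{\delta}$ splits additively, so that
\begin{equation*}
\int_{\Omega^{\mp}}\mathsf{D}\vecU\cdot\vecV\dif\bl{x}
= \int_{\Omega^{\mp}}\bl{d}\vecU\cdot\vecV\dif\bl{x} + \int_{\Omega^{\mp}}\bm{\delta}\vecU\cdot\vecV\dif\bl{x},
\end{equation*}
and I will handle the two pieces separately. Note that if $\vecU\in\bl{H}(\mathsf{D},\Omega^\mp)$ is understood, as in \eqref{eq: Dirac decomposition}, as $\vecU\in\bl{H}(\bl{d},\Omega^\mp)\cap\bl{H}(\bm{\delta},\Omega^\mp)$, then the hypotheses of Lemma \ref{eq: IBP weakform} are satisfied for either pairing.

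First I would apply Lemma \ref{eq: IBP weakform} directly to rewrite the first summand as
\begin{equation*}
\int_{\Omega^{\mp}}\bl{d}\vecU\cdot\vecV\dif\bl{x} = \int_{\Omega^{\mp}}\vecU\cdot\bm{\delta}\vecV\dif\bl{x} \pm \llangle\gamma_{\mathsf{T}}\vecU,\gamma_{\mathsf{R}}\vecV\rrangle_{\Gamma}.
\end{equation*}
Next, to transform the second summand, I would apply the same lemma but with the roles of $\vecU$ and $\vecV$ exchanged: this gives $\int_{\Omega^\mp}\bl{d}\vecV\cdot\vecU\dif\bl{x} = \int_{\Omega^\mp}\vecV\cdot\bm{\delta}\vecU\dif\bl{x} \pm \llangle\gamma_{\mathsf{T}}\vecV,\gamma_{\mathsf{R}}\vecU\rrangle_{\Gamma}$, which I would rearrange into
\begin{equation*}
\int_{\Omega^{\mp}}\bm{\delta}\vecU\cdot\vecV\dif\bl{x} = \int_{\Omega^{\mp}}\vecU\cdot\bl{d}\vecV\dif\bl{x} \mp \llangle\gamma_{\mathsf{T}}\vecV,\gamma_{\mathsf{R}}\vecU\rrangle_{\Gamma}.
\end{equation*}

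Finally, adding the two identities and recognizing $\bm{\delta}\vecV+\bl{d}\vecV=\mathsf{D}\vecV$ on the right-hand side yields the claim \eqref{eq: IBP Dirac} with exactly the asserted signs. There is no genuine obstacle here, as the corollary is a direct symmetrization of the preceding lemma; the only point requiring a brief justification is that the graph-space regularity $\vecU,\vecV\in\bl{H}(\mathsf{D},\Omega^\mp)$ suffices for both invocations of Lemma \ref{eq: IBP weakform}, which is guaranteed by the domain of $\mathsf{D}$ fixed in \eqref{eq: Dirac decomposition}.
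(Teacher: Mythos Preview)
Your argument is correct and is exactly the intended derivation: the paper states this result as an immediate corollary of Lemma~\ref{eq: IBP weakform}, and the two applications of that lemma you describe (once to $(\vecU,\vecV)$ and once to $(\vecV,\vecU)$) followed by addition is precisely how the identity \eqref{eq: IBP Dirac} is obtained.
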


\begin{remark}
	It is remarkable that despite the fact that $\mathsf{D}$ is a first-order operator, \cref{eq: IBP Dirac} nevertheless resembles Green's classical second formula for the Laplacian. This induces profound structural similarities between the representation formula, potentials and boundary integral equations for the Dirac operator established in the next sections and the already well-known theory for second-order elliptic operators.  As emphasized in \cite{schulz2020spurious}, a formula such as \cref{eq: IBP Dirac} paves the way for harnessing powerful established techniques.
\end{remark}

We will indicate with curly brackets the average $\{\gamma_{\bullet}\}:= \frac{1}{2}(\gamma_{\bullet}^+ + \gamma_{\bullet}^-)$ of a trace and with square brackets its jump $[\gamma_{\bullet}]:=\gamma_{\bullet}^- - \gamma_{\bullet}^+$ over the interface $\Gamma$.
\begin{warning}
	Notice the sign in the jump $[\gamma]=\gamma^- - \gamma^+$, which is often taken to be the opposite in the literature!
\end{warning}

\section{Boundary value problems}\label{sec: Hodge--Dirac operators in 3D Euclidean space}
In light of \Cref{lem: traces are continuous and surjective} and the duality in \eqref{eq: duality of trace spaces}, the integration by parts formula \eqref{eq: IBP Dirac} points towards two types of boundary conditions. Consider the boundary value problems of finding $\vecU\in\HD$ satisfying
\begin{align}\label{pb: T}
\color{blue}
\begin{cases}
    \Dirac\vecU &=\vec{\bl{0}},  \qquad\text{in }\Omega,\\
    \traceT\vecU &=\vec{\bl{b}}, \qquad\text{on }\Gamma,
\end{cases} &&\vecb\in\HT,\tag{\textsf{T}}
\end{align}
or
\begin{align}\label{pb: R}
\color{red}
\begin{cases}
    \Dirac\vecU &=\vec{\bl{0}},  \qquad\text{in }\Omega,\\
    \traceR\vecU &=\veca, \qquad\text{on }\Gamma,
\end{cases}&&\veca\in\HR.\tag{\textsf{R}}
\end{align}
For $\Omega=\Omega^+$, also impose the decay condition that $\vecU(\bl{x})\rightarrow 0$ uniformly as $\bl{x}\rightarrow\infty$, cf. \Cref{lem: decay condition M=0}. In the following sections, development related to problem \eqref{pb: T} will be colored in{ \color{blue}blue}, while {\color{red}red} will be used for \eqref{pb: R}.

When $\Omega$ is bounded, the self-adjoint Dirac operator behind \eqref{pb: R} is 
\begin{equation}
\color{red}\mathsf{D}^{\Omega}_{\mathsf{R}}=\bl{d}+\bl{d}^*,    
\end{equation} 
where $\bl{d}:L^2(\Omega)^8\rightarrow L^2(\Omega)^8$ is the closed densely defined Fredholm-nilpotent linear operator associated with the $L^2$ de Rham cochain complex \cite{arnold2018finite,leopardi2016abstract}
\begin{equation}\label{domain de Rham complex d 3D}
\xymatrix{
	\color{red}H^1\bra{\Omega}\ar@[red][r]^-{\color{red}\nabla}&\color{red} \mathbf{H}\bra{\mathbf{curl},\Omega}\ar@[red][r]^-{\color{red}\mathbf{curl}}& \color{red}\mathbf{H}\bra{\text{div},\Omega}  \ar@[red][r]^-{\color{red}\text{div}} & \color{red}L^2\bra{\Omega},
	}
\end{equation}
cf. \cite[Chap. 3-4]{arnold2018finite}, \cite[Sec. 2]{leopardi2016abstract}. The Hilbert space adjoint $\bl{d}^*$ is the nilpotent operator associated with the dual chain complex \cite[Sec. 4.3, Thm. 6.5]{arnold2018finite} 
\begin{equation}\label{domain d adjoint complex}
\xymatrix{
	\color{red}L_*^2\bra{\Omega}
	&\ar@[red][l]^-{\color{red}\,\,-\text{div}} \color{red}\mathbf{H}_0\bra{\text{div},\Omega}
	&\ar@[red][l]^-{\color{red}\mathbf{curl}} \color{red}\mathbf{H}_0\bra{\mathbf{curl},\Omega} 
	&\ar@[red][l]^-{\color{red}-\nabla} \color{red}\mathbf{H}_0^1\bra{\Omega}.
}
\end{equation}
The mapping properties of $\mathsf{D}_{\mathsf{R}}$ and its domain are detailed in \Cref{fig: natural boundary conditions}.

Similarly, the self-adjoint operator
\begin{equation}
    \color{blue}\mathsf{D}^{\Omega}_{\mathsf{T}}:= \bm{\delta} + \bm{\delta}^*
\end{equation}
behind \eqref{pb: T} arises from the dual perspective, where we view the codifferential operator $\bm{\delta}:L^2(\Omega)^8\rightarrow L^2(\Omega)^8$ as the nilpotent operator associated with the Hilbert chain complex
\begin{equation}\label{domain de Rham complex delta 3D}
\xymatrix{
	\color{blue}L^2\bra{\Omega}
	&\ar@[blue][l]^-{\color{blue}\,\,-\text{div}} \color{blue}\mathbf{H}\bra{\text{div},\Omega}
	&\ar@[blue][l]^-{\color{blue}\mathbf{curl}} \color{blue}\mathbf{H}\bra{\mathbf{curl},\Omega} 
	&\ar@[blue][l]^-{\color{blue}-\nabla} \color{blue}\mathbf{H}^1\bra{\Omega}.
}
\end{equation}
The adjoint $\bm{\delta}^*$ is spawned by the chain complex
\begin{equation}\label{domain delta adjoint complex}
\xymatrix{
	\color{blue}H_0^1\bra{\Omega}\ar@[blue][r]^-{\color{blue}\nabla}&\color{blue} \mathbf{H}_0\bra{\mathbf{curl},\Omega}\ar@[blue][r]^-{\color{blue}\mathbf{curl}}& \color{blue}\mathbf{H}_0\bra{\text{div},\Omega}  \ar@[blue][r]^-{\color{blue}\text{div}} & \color{blue}L_*^2\bra{\Omega}.
	}
\end{equation}
See \Cref{fig: essential boundary conditions} for the explicit mapping properties of $\mathsf{D}^{\Omega}_{\mathsf{T}}$ and its domain of definition.

\begin{figure}
	\begin{center}
		\makebox[\textwidth]{\includegraphics[width=0.6\linewidth]{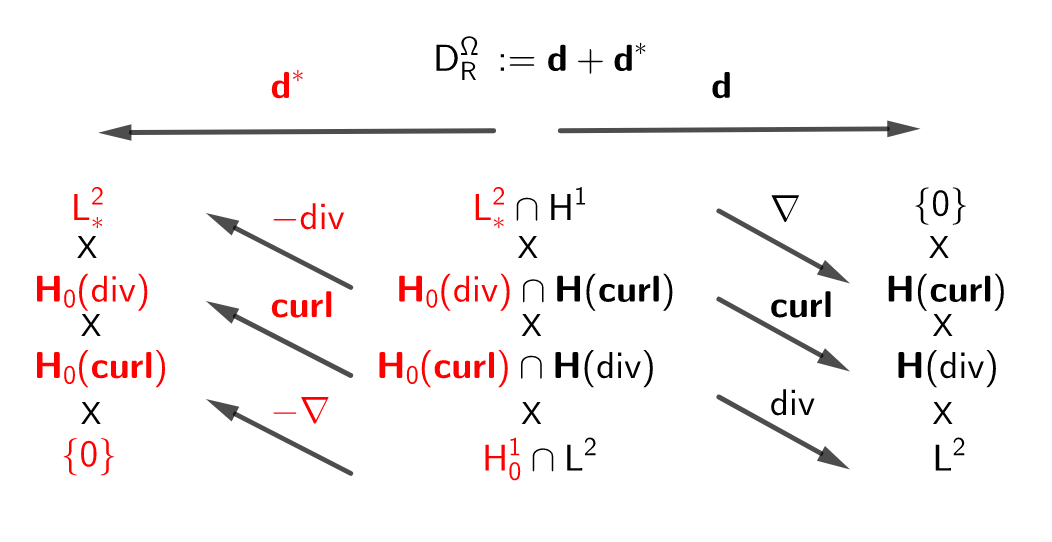}}
	\end{center}
	\caption{This diagram shows the mapping properties of the exterior derivatives and their Hilbert space adjoints corresponding to the functional analytic setting of \cite{leopardi2016abstract} for problem \eqref{pb: R} in $\Omega^-$. In the figure, the operators on the left-hand side are to be understood as the adjoint operators ${\color{red}-\text{div}}=\nabla^*$, ${\color{red}\mathbf{curl}}=\mathbf{curl}^*$ and ${\color{red}-\nabla}=\text{div}^*$.}\label{fig: natural boundary conditions}
\end{figure} 

\begin{figure}
	\begin{center}
		\makebox[\textwidth]{\includegraphics[width=0.6\linewidth]{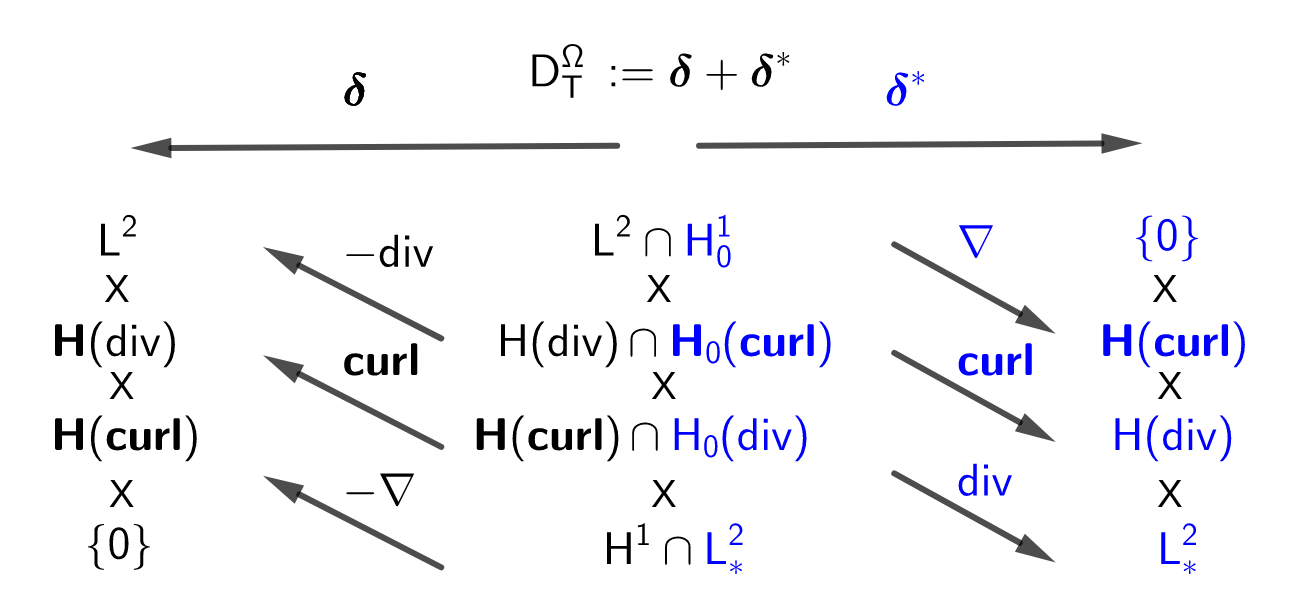}}
	\end{center}
	\caption{This diagram shows the mapping properties of the codifferentials and their Hilbert space adjoints corresponding to the functional analytic setting of \cite{leopardi2016abstract} for problem \eqref{pb: T} in $\Omega^-$. In the figure, the operators on the left-hand side are to be understood as the adjoint operators ${\color{blue}\nabla}=-\text{div}^*$, ${\color{blue}\mathbf{curl}}=\mathbf{curl}^*$ and ${\color{blue}\text{div}}=-\nabla$.}\label{fig: essential boundary conditions}
\end{figure}

So unlike second-order operators, the Hodge--Dirac operator admits two distinct fundamental symmetric bilinear forms
\begin{subequations}
\begin{align}
\color{blue}\mathcal{A}_{\extdel}(\vecU,\vecV)&=\color{blue}\intOmega{\extdel\vecU\cdot\vecV+\vecU\cdot\extdel\vecV}, & \vecU,\vecV\in\Hdelta,\\
\color{red}\mathcal{A}_{\extd}(\vecU,\vecV) &=\color{red}\intOmega{\extd\vecU\cdot\vecV+\vecU\cdot\extd\vecV}, &\vecU,\vecV\in\Hd,
\end{align}
\end{subequations}
that rest on an equal footing. They readily appear upon integrating by parts with \Cref{eq: IBP weakform} and they are involved in the first-order analogs of Green's identities
\begin{subequations}
\begin{align}
\int_{\Omega^{\mp}}{\Dirac\vecU\cdot\vecV}&=\mathcal{A}_{\extdel}(\vecU,\vecV)\pm\llangle\traceT\vecU,\traceR\vecV\rrangle_{\Gamma},\label{eq: Green first delta}\\
\int_{\Omega^{\mp}}{\Dirac\vecU\cdot\vecV}&=\mathcal{A}_{\extd}(\vecU,\vecV)\mp\llangle\traceT\vecV,\traceR\vecU\rrangle_{\Gamma},\label{eq: Green first d}
\end{align}
\end{subequations}
which hold for all $\vecU,\vecV\in\HD$. 

These identities lead to the variational problems:
\begin{align}
\color{blue}
&\vecU\in\Hdelta\,: &\quad \color{blue}\mathcal{A}_{\extdel}(\vecU,\vecV)&=\color{blue}-\llangle\vecb,\traceR\vecV\rrangle_{\Gamma}, &\quad &\forall\,\vecV\in\Hdelta,\tag{V\textsf{T}}\label{pb: VT}
\end{align}
and
\begin{align}
&\vecU\in\Hd\,: &\quad \color{red}\mathcal{A}_{\extd}(\vecU,\vecV)&=\color{red}\llangle\veca,\traceT\vecV\rrangle_{\Gamma}, &\quad &\,\,\,\,\forall\,\vecV\in\Hd.\tag{V\textsf{R}}\label{pb: VR}
\end{align}

\subsection{Compatibility conditions} Either from  Green's second formula for the Dirac operator \eqref{eq: IBP Dirac} or the variational problems themselves, we see that the boundary values $\vecb\in\HT$ and $\veca\in\HR$ must fulfill compatibility conditions. For the problems to admit solutions, we require that
\begin{align}\label{CCT}
   {\color{blue} \llangle\vecb,\traceR\vecV\rrangle_{\Gamma}=\vec{\bl{0}},\qquad\qquad\forall\,\vecV\in\mathfrak{H}_{T}\tag{CC\textsf{T}},}
\end{align}
and
\begin{align}\label{CCR}
   {\color{red} \llangle\veca,\traceT\vecV\rrangle_{\Gamma}=\vec{\bl{0}}, \qquad\qquad\forall\,\vecV\in\mathfrak{H}_{R}\tag{CC\textsf{R}},}
\end{align}
where
\begin{subequations}
\begin{align}
\color{blue}\mathfrak{H}_{\mathsf{T}}(\Omega)&:=\color{blue} \left\{\vecV\in\HD : \Dirac\vecV =0, \traceT\vecV=\vec{\bl{0}}\right\}
\end{align}
and
\begin{align}
\color{red}\mathfrak{H}_{\mathsf{R}}(\Omega) &\color{red}:= \{\vecV\in\HD : \Dirac\vecV =0, \traceR\vecV=\vec{\bl{0}}\}
\end{align}
\end{subequations}
are spaces of harmonic vector-fields. We refer to \cite{arnold2006finite,arnold2010finite,arnold2018finite} and \cite{leopardi2016abstract} for explanations on how these spaces exactly correspond to the nullspaces of the Hodge-Laplacian with natural and essential boundary conditions.

The fact that there are two distinct bilinear forms in the expressions \eqref{pb: VT} and \eqref{pb: VR} is one of the appealing use of the dual perspective involving the codifferential $\bm{\delta}$. It points to the symmetry presented in \Cref{sec: Duality and symmetry} below, and it highlights the necessity of imposing compatibility conditions on the data. For example, we could alternatively formulate \eqref{pb: T} as the variational problem
\begin{align}\label{eq: essential boundary condition}
&\vecU\in\Hd\,\quad\text{with}\quad\traceT\vecU=\vecb: &\quad \mathcal{A}_{\extd}(\vecU,\vecV)&=0, &\quad &\,\,\,\,\forall\,\vecV\in\Hdnot,
\end{align}
where $\Hdnot= H^1_0(\Omega)\times \mathbf{H}_0(\mathbf{curl}, \Omega)\times \mathbf{H}_0(\text{div}, \Omega)\times L^2(\Omega)$. But according to \eqref{eq: IBP Dirac} the condition \eqref{CCT} must remain, and it now appears less obviously so when the type of boundary condition is \textit{essential}. Anyway, in a formulation such as \eqref{eq: essential boundary condition}, one proceeds with a lifting of the boundary data and is left with the solvability of the problem
\begin{align}
&\vecU_0\in\Hdnot\,: &\quad \mathcal{A}_{\extd}(\vecU,\vecV)&=-\mathcal{A}_{\extd}(\mathcal{E}_{\mathsf{T}}\vecb,\vecV), &\quad &\,\,\,\,\forall\,\vecV\in\Hdnot.
\end{align}
So the question of compatibly cannot be avoided: integrating by parts with the right-hand side evaluated at a nullspace element in $\mathfrak{H}_{\mathsf{T}}$ using \eqref{eq: Green first d} leads to \eqref{CCT}. We discuss in greater details the reason why the two boundary conditions can be formulated both as \textit{natural} and \textit{essential} in \Cref{sec: Duality and symmetry}.

\subsection{Well-posedness}
Since the bilinear form $\color{blue}\mathcal{A}_{\extdel}$ is associated with the self-adjoint operator $\color{blue}\mathsf{D}_{\mathsf{T}}$ obtained from the chain complex \eqref{domain delta adjoint complex} and $\color{red}\mathcal{A}_{\bl{d}}$ to the self-adjoint operator $\color{red}\mathsf{D}_{\mathsf{R}}$ spawned by the cochain complex \eqref{domain de Rham complex d 3D}, they fit the framework of \cite[Sec. 2]{leopardi2016abstract}. The abstract inf-sup inequality supplied in \cite[Thm. 6]{leopardi2016abstract} applies to both bilinear forms and leads to well-posedness of the mixed variational problems:
\begin{align}
\begin{split}\label{pb: MVT}
    \color{blue}\mathcal{A}_{\extdel}(\vecU,\vecV) + \bra{\vec{\bl{P}},\vecV}_{\Omega} &\color{blue}=-\llangle\vecb,\traceR\vecV\rrangle_{\Gamma}  \quad\qquad\forall\,\vecV\in\bl{H}\bra{\bm{\delta},\Omega^-},\\
    \color{blue}\bra{\vecU,\vec{\bl{W}}}_{\Omega}&\color{blue}= 0 \qquad\qquad\qquad\qquad\forall\,\vec{\bl{W}}\in\ker\mathsf{D}^{\Omega}_{\mathsf{T}},
    \end{split}\tag{MV\textsf{T}}
\end{align}
and
\begin{align}
\begin{split}\label{pb: MVR}
    \color{red}\mathcal{A}_{\bl{d}}(\vecU,\vecV) + \bra{\vec{\bl{Q}},\vecV}_{\Omega} &\color{red}=\llangle\veca,\traceT\vecV\rrangle_{\Gamma}  \quad\qquad\forall\,\vecV\in\bl{H}\bra{\bl{d},\Omega^-},\\
    \color{red}\bra{\vecU,\vec{\bl{W}}}_{\Omega}&\color{red}= 0 \quad\qquad\qquad\qquad\forall\,\vec{\bl{W}}\in\ker\mathsf{D}^{\Omega}_{\mathsf{R}},
    \end{split}\tag{MV\textsf{R}}
\end{align}
for unknown pairs $(\vecU,\vec{\bl{P}})\in\bl{H}(\bm{\delta},\Omega^-)\times\ker\mathsf{D}_{\mathsf{T}}$ and $(\vecU,\vec{\bl{Q}})\in\bl{H}(\bl{d},\Omega^-)\times\ker\mathsf{D}_{\mathsf{R}}$.

Consistency of the right-hand side in \eqref{pb: VT} exactly corresponds to requiring that \eqref{CCT} holds for the given data $\vecb\in\HT$, while \eqref{CCR} similarly guarantees consistency of the right-hand side in \eqref{pb: VR}. We conclude that if the compatibility conditions are satisfied, solutions to \eqref{pb: VT} and \eqref{pb: VR} in $\Omega^-$ are unique up to contributions of harmonic vector-fields in $\ker\mathsf{D}_{\mathsf{T}}$ and $\ker\mathsf{D}_{\mathsf{R}}$. Moreover, they continuously depend on the boundary data.

\section{Representation formulas}
We derive interior and exterior representation formulas for solutions of the Dirac equation. It is expressed through known boundary potentials, whose jump properties across $\Gamma$ are elaborated.
\subsection{Fundamental solution}\label{subsec: Fundamental solution}
Convolution of a vector field $\vec{\bl{U}}:\mathbb{R}^3\rightarrow\mathbb{R}^8$ by a matrix-valued function $\mathsf{K}:\mathbb{R}^3\backslash\{\bl{0}\}\rightarrow\mathbb{R}^{8,8}$ possibly having a singularity at the $\bl{0}\in\mathbb{R}^3$ is defined, if the limit exists, as the Cauchy principal value
\begin{equation}\label{eq: conv by matrix}
\bra{\mathsf{K}*\vec{\bl{U}}}\bra{\mathbf{x}} :=\lim_{\epsilon\rightarrow 0}\int_{\mathbb{R}^3\backslash B_{\epsilon}(\bl{0})}\mathsf{K}(\bl{x}-\bl{y})\vec{\bl{U}}\bra{\bl{y}}\dif\bl{y}\in\mathbb{R}^8,
\end{equation}
where $B_{\epsilon}\bra{\bl{0}}\subset\mathbb{R}^3$ is a ball of radius $\epsilon$ centered at the origin.

Let $G:\mathbb{R}^3\backslash\left\{\bl{0}\right\}\rightarrow\mathbb{R}$ be given by $G\bra{\bl{z}}:=\bra{4\pi\abs{\bl{z}}}^{-1}$, and set 
\begin{equation}
\mathsf{G}\bra{\mathbf{z}} := G\bra{\bl{z}}
\mathsf{I}_{8}\in \mathbb{R}^{8,8},\qquad \bl{z}\neq\bl{0},
\end{equation} 
where $\mathsf{I}_{8}$ is the identity matrix on $\mathbb{R}^8$. Then, define $\Phi:\mathbb{R}^3\backslash\{\bl{0}\}\rightarrow \mathbb{R}^{8,8}$ by applying the Dirac operator to the columns of $\mathsf{G}$ as
\begin{equation*}
	\Phi\bra{\bl{z}} :=
	\begin{pmatrix}
	0 & - \bra{\nabla G}^{\top}\bra{\bl{z}} & \bl{0}^\top & 0\\
	\bra{\nabla G}\bra{\bl{z}} & \bl{0}_{\,3\times 3} &
	\mathsf{A}_{3\times 3}(\bl{z})
	& \bl{0}\\
	\bl{0} & \mathsf{A}_{3\times 3}(\bl{z}) & \bl{0}_{\,3\times 3} & - \bra{\nabla G}\bra{\bl{z}}\\
	0 & \bl{0}^{\top} & \bra{\nabla G}^{\top}\bra{\bl{z}} & 0
	\end{pmatrix}\in\mathbb{R}^{8\times 8},\,\,\bl{z}\neq \bl{0},
\end{equation*}
\noindent where the anti-symmetric blocks
\begin{equation}
\mathsf{A}_{3\times 3}(\bl{z}) := \begin{pmatrix}
0 & - \bra{\partial_3G}\bra{\bl{z}} & \bra{\partial_2G}\bra{\bl{z}}\\
\bra{\partial_3G}\bra{\bl{z}} & 0 & -\bra{\partial_1G}\bra{\bl{z}}\\
-\bra{\partial_2G}\bra{\bl{z}} & \bra{\partial_1G}\bra{\bl{z}} & 0
\end{pmatrix}\in\mathbb{R}^{3\times 3}, \qquad \bl{z}\neq \bl{0},
\end{equation}
are associated with the curl operator.

\begin{lemma}\label{lem: anti-symmetric properties of fundamental sol}
	For $\bl{z}\neq \bl{0}$,
	\begin{align}
	\Phi\bra{-\bl{z}} = - \Phi\bra{\bl{z}} &&\text{and}
	 &&\Phi\bra{\bl{z}}\vec{\bl{U}}\cdot\vec{\bl{V}}=-\vec{\bl{U}}\cdot\Phi\bra{\bl{z}}\vec{\bl{V}}
	\end{align}
	for all $\vec{\bl{U}},\vec{\bl{V}}\in\mathbb{R}^8$.
\end{lemma}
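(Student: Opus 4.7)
The plan is to reduce both identities to direct inspection of the matrix $\Phi(\bl{z})$, exploiting the symmetry of the Newton kernel $G$.

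For the first identity $\Phi(-\bl{z})=-\Phi(\bl{z})$, the key observation is that $G(\bl{z})=(4\pi|\bl{z}|)^{-1}$ is an even function of $\bl{z}$, so that $\nabla G$ — and hence each partial derivative $\partial_i G$ — is odd. Substituting $-\bl{z}$ for $\bl{z}$ in every block of $\Phi$ therefore flips the sign of the entries $\pm\nabla G$ appearing in the first column/row and last column/row, as well as of the partial derivatives $\partial_i G$ that make up $\mathsf{A}_{3\times 3}$. Collecting these sign flips immediately gives the claimed identity.

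For the second identity, I would recast it as the assertion that $\Phi(\bl{z})$ is antisymmetric, since for any matrix $M\in\mathbb{R}^{8\times 8}$ one has $Mu\cdot v=-u\cdot Mv$ for all $u,v\in\mathbb{R}^8$ if and only if $M^\top=-M$. Antisymmetry of $\Phi(\bl{z})$ can then be verified block-by-block against its definition: the $(1,2)$-block $-(\nabla G)^\top$ is minus the transpose of the $(2,1)$-block $\nabla G$; likewise, the $(3,4)$- and $(4,3)$-blocks carrying $-\nabla G$ and $(\nabla G)^\top$ are each other's negative transpose; the diagonal blocks and scalar diagonal entries all vanish; and the off-diagonal $3\times 3$ blocks at positions $(2,3)$ and $(3,2)$ both equal $\mathsf{A}_{3\times 3}(\bl{z})$, which is antisymmetric by its explicit definition, so that $\mathsf{A}_{3\times 3}^\top=-\mathsf{A}_{3\times 3}$, as required.

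There is essentially no analytic or geometric obstacle here, since $\bl{z}\neq\bl{0}$ keeps us away from the singularity and the argument is a finite algebraic check. The only care needed is bookkeeping: tracking the block decomposition $\vecU=(U_0,\bl{U}_1,\bl{U}_2,U_3)^\top$ consistently, and recognising that the transpose of a column vector block appears as a row block in the transposed matrix. Once antisymmetry of $\Phi(\bl{z})$ is established, the bilinear identity $\Phi(\bl{z})\vecU\cdot\vecV=-\vecU\cdot\Phi(\bl{z})\vecV$ is the elementary characterisation of antisymmetric matrices recalled above.
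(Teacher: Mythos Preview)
Your proof is correct and essentially matches the paper's own argument: the second identity is dispatched in the paper with ``clear by definition,'' which is precisely your block-by-block verification of antisymmetry. For the first identity the paper packages the same observation---$G$ even implies $\nabla G$ odd---via the chain rule applied to the definition $\Phi=\mathsf{D}\mathsf{G}$ rather than by entrywise inspection, but the content is identical.
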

\begin{proof}
	Let $\mathsf{s}:\mathbb{R}^3\rightarrow\mathbb{R}^3$ be the sign flip operation $\mathsf{s}(\bl{z})=-\bl{z}$. For the fist identity, we simply rely on the fact that $G\bra{\bl{x}}=G\bra{\,\abs{\bl{x}}}$ to verify that for any $\vec{\bl{U}}\in\mathbb{R}^8$, 
	\begin{multline}
	\Phi\bra{-\bl{z}}\vec{\bl{U}}=\mathsf{D}\bra{\mathsf{G}\vec{\bl{U}}}\Big\vert_{\mathsf{s}\bra{\bl{z}}}= -\mathsf{D}_{\bl{x}}\bra{\mathsf{G}\bra{\sf{s}\bra{\bl{x}}}\vec{\bl{U}}}\Big\vert_{\bl{x}=\bl{z}} \\= -\mathsf{D}_{\bl{x}}\bra{G\bra{\sf{s}\bra{\bl{x}}}\vec{\bl{U}}}\Big\vert_{\bl{x}=\bl{z}}
	= -\mathsf{D}_{\bl{x}}\bra{G\bra{\bl{x}}\vec{\bl{U}}}\Big\vert_{\bl{x}=\bl{z}}= -\Phi\bra{\bl{z}}\vec{\bl{U}}.
	\end{multline}
    The second identity is clear by definition.
\end{proof}

This lemma allows to extend the domain of the Newton-type potential
\begin{align*}
\mathsf{N}:C_0^{\infty}(\mathbb{R}^3)^{8}&\rightarrow C^{\infty}(\mathbb{R}^3)^{8}\\
\vec{\bl{U}}&\mapsto\Phi * \vec{\bl{U}}
\end{align*}
to distributions.

\begin{lemma}\label{lem: symmetry Newton}
	For all $\vec{\bl{U}},\vec{\bl{V}}\in C^{\infty}_0(\mathbb{R}^3)^8$,
	\begin{equation}
	\left(\mathsf{N}\,\vec{\bl{U}},\vec{\bl{V}}\right) = \left(\vec{\bl{U}},\mathsf{N}\,\vec{\bl{V}}\right).
	\end{equation}
\end{lemma}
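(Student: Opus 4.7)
The plan is to unfold the pairing on the left into a double integral, swap the roles of $\bl{x}$ and $\bl{y}$, and then apply the two antisymmetry properties established in \Cref{lem: anti-symmetric properties of fundamental sol} so that the two resulting sign flips cancel.

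Concretely, I would first observe that the entries of $\Phi$ are bounded in absolute value by a constant times $|\bl{z}|^{-2}$, which is locally integrable in $\mathbb{R}^3$. Combined with the compact supports of $\vec{\bl{U}}$ and $\vec{\bl{V}}$, this shows that the double integral
\begin{equation*}
\bigl(\mathsf{N}\vec{\bl{U}},\vec{\bl{V}}\bigr)
=\int_{\mathbb{R}^3}\int_{\mathbb{R}^3}\Phi(\bl{x}-\bl{y})\vec{\bl{U}}(\bl{y})\cdot\vec{\bl{V}}(\bl{x})\dif\bl{y}\dif\bl{x}
\end{equation*}
converges absolutely. In particular the Cauchy principal value in \eqref{eq: conv by matrix} coincides with the ordinary integral and Fubini's theorem applies. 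This mild integrability argument is the only technicality of the proof.

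Next I would swap the order of integration and rename the integration variables via $\bl{x}\leftrightarrow\bl{y}$, which replaces $\Phi(\bl{x}-\bl{y})$ by $\Phi(\bl{y}-\bl{x})$. By the first identity of \Cref{lem: anti-symmetric properties of fundamental sol}, $\Phi(\bl{y}-\bl{x})=-\Phi(\bl{x}-\bl{y})$, producing one minus sign. Then, for fixed $\bl{x},\bl{y}$, the second identity of that lemma yields
\begin{equation*}
\Phi(\bl{x}-\bl{y})\vec{\bl{V}}(\bl{y})\cdot\vec{\bl{U}}(\bl{x}) = -\vec{\bl{V}}(\bl{y})\cdot\Phi(\bl{x}-\bl{y})\vec{\bl{U}}(\bl{x}),
\end{equation*}
contributing the second minus sign. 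The two signs cancel and one reads off the right-hand side as $(\vec{\bl{U}},\mathsf{N}\vec{\bl{V}})$ after recognizing the inner integral as $\mathsf{N}\vec{\bl{V}}(\bl{x})$ (again using absolute convergence to write it as a standard integral rather than a principal value).

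The main potential obstacle, which is quite mild, is verifying that the double integral is genuinely absolutely convergent so that Fubini and the swap are legal; once that is established, the conclusion is an immediate two-line manipulation based entirely on \Cref{lem: anti-symmetric properties of fundamental sol}. No other structure of $\Phi$, such as its block form or its relation to $\bl{d}$ and $\bm{\delta}$, is needed in this step.
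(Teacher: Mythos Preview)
Your proposal is correct and follows essentially the same route as the paper: both proofs write out the double integral, invoke Fubini (you add the explicit absolute-integrability justification, which the paper leaves implicit), and apply the two antisymmetry identities of \Cref{lem: anti-symmetric properties of fundamental sol} so that the resulting sign flips cancel. One small slip: after your variable swap the integrand is $\Phi(\bl{x}-\bl{y})\vec{\bl{U}}(\bl{x})\cdot\vec{\bl{V}}(\bl{y})$, not $\Phi(\bl{x}-\bl{y})\vec{\bl{V}}(\bl{y})\cdot\vec{\bl{U}}(\bl{x})$, so the second identity should be applied with $\vec{\bl{U}}(\bl{x})$ and $\vec{\bl{V}}(\bl{y})$ interchanged from what you wrote---but this is only a labeling typo and the argument goes through unchanged.
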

\begin{proof}
	Using \Cref{lem: anti-symmetric properties of fundamental sol}, we can change the order of integration using  Fubini's theorem and evaluate
	\begin{align}
	\left(\mathsf{N}\vec{\bl{U}},\vec{\bl{V}}\right) &= \int_{\mathbb{R}^3}\int_{\mathbb{R}^3}\Phi\bra{\bl{x}-\bl{y}}\vec{\bl{U}}\bra{\bl{y}}\cdot\vec{\bl{V}}\bra{\bl{x}}\dif\bl{x}\dif\bl{y}\\
	&=\int_{\mathbb{R}^3}\int_{\mathbb{R}^3}\vec{\bl{U}}\bra{\bl{y}}\cdot\Phi\bra{\bl{y}-\bl{x}}\vec{\bl{V}}\bra{\bl{x}}\dif\bl{x}\dif\bl{y}\\
	&=\int_{\mathbb{R}^3}\vec{\bl{U}}\bra{\bl{y}}\cdot\int_{\mathbb{R}^3}\Phi\bra{\bl{y}-\bl{x}}\vec{\bl{V}}\bra{\bl{x}}\dif\bl{x}\dif\bl{y}\\
	&=\left(\vec{\bl{U}},\mathsf{N}\,\vec{\bl{V}}\right).
	\end{align}
\end{proof}

\begin{remark}\Cref{lem: symmetry Newton} reflects the fact that the Dirac operator is symmetric as an unbounded operator on $(L^2(\mathbb{R}^3))^8$.
\end{remark} 

The extension 
\begin{equation}
\mathsf{N}:(C^{\infty}(\mathbb{R}^3)^8)'\rightarrow (C^{\infty}_0(\mathbb{R}^3)^8)'
\end{equation}
is obtained as in \cite[Sec. 3.1.1]{sauter2010boundary} via dual mapping by defining the action of the distribution $\mathsf{N}\vec{\bl{U}}\in (C^{\infty}_0(\mathbb{R}^3)^8)'$ on $\vec{\bl{V}}\in C^{\infty}_0(\mathbb{R}^3)^8$ as
\begin{equation}
\langle\mathsf{N}\,\vec{\bl{U}}, \vec{\bl{V}}\rangle := \langle\vec{\bl{U}}, \mathsf{N}\,\vec{\bl{V}}\rangle.
\end{equation}

\begin{proposition}[{Fundamental solution}]\label{prop: fundamental sol}
For all compactly supported distributions $\vec{\bl{U}}\in (C^\infty(\mathbb{R}^3)^8)'$,
	\begin{equation}
	\mathsf{N}\,\mathsf{D}\,\vec{\bl{U}} =\vec{\bl{U}} =\mathsf{D}\,\mathsf{N}\,\vec{\bl{U}}
	\end{equation}
holds in $(C^{\infty}_0(\mathbb{R}^3)^8)'$.
\end{proposition}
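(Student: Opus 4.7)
The plan is to reduce the claim to the classical identity $-\Delta G=\delta_0$ via the factorization $\Phi=\mathsf{D}\mathsf{G}$, where $\mathsf{G}=G\,\mathsf{I}_8$, combined with the algebraic identity $\mathsf{D}^2=-\Delta\,\mathsf{I}_8$. This latter identity follows from the block structure of $\bl{d}$ and $\bm{\delta}$: direct inspection shows $\bl{d}^2=\bm{\delta}^2=0$ (encoding $\mathbf{curl}\,\nabla=\bl{0}$ and $\text{div}\,\mathbf{curl}=0$), whence $\mathsf{D}^2=\bl{d}\bm{\delta}+\bm{\delta}\bl{d}$, and a short componentwise check using $\mathbf{curl}\,\mathbf{curl}=\nabla\,\text{div}-\Delta$ produces $-\Delta\,\mathsf{I}_8$. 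By definition of $\Phi$ as the result of applying $\mathsf{D}$ column-wise to $\mathsf{G}$, one also has $\Phi=\mathsf{D}\mathsf{G}$ pointwise on $\mathbb{R}^3\setminus\{\bl{0}\}$.

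First I would verify the two identities for smooth compactly supported $\vec{\bl{U}}\in C_0^{\infty}(\mathbb{R}^3)^8$. There, single derivatives commute with convolution against the locally integrable kernels that constitute $\Phi$, and can be transferred onto the smooth factor, yielding $\mathsf{N}\vec{\bl{U}}=(\mathsf{D}\mathsf{G})*\vec{\bl{U}}=\mathsf{G}*(\mathsf{D}\vec{\bl{U}})$. Applying $\mathsf{D}$ once more and invoking $\mathsf{D}^2=-\Delta\,\mathsf{I}_8$ gives
\[
\mathsf{D}\,\mathsf{N}\,\vec{\bl{U}}=\mathsf{G}*\mathsf{D}^2\vec{\bl{U}}=-\mathsf{G}*\Delta\vec{\bl{U}}=(-\Delta\,\mathsf{G})*\vec{\bl{U}}=\vec{\bl{U}},
\]
where the last equality uses $-\Delta G=\delta_0$. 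Symmetrically, $\mathsf{N}\,\mathsf{D}\,\vec{\bl{U}}=\vec{\bl{U}}$.

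For a compactly supported distribution $\vec{\bl{U}}$, I would test against arbitrary $\vec{\bl{V}}\in C_0^{\infty}(\mathbb{R}^3)^8$. The formal self-adjointness of $\mathsf{D}$ on such test fields, noted in the remark following \Cref{lem: symmetry Newton}, together with the distributional definition of $\mathsf{N}$ obtained from that same lemma, gives
\[
\langle\mathsf{D}\,\mathsf{N}\,\vec{\bl{U}},\vec{\bl{V}}\rangle=\langle\mathsf{N}\,\vec{\bl{U}},\mathsf{D}\,\vec{\bl{V}}\rangle=\langle\vec{\bl{U}},\mathsf{N}\,\mathsf{D}\,\vec{\bl{V}}\rangle=\langle\vec{\bl{U}},\vec{\bl{V}}\rangle,
\]
the last step invoking the smooth case just proved applied to $\vec{\bl{V}}$. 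The same reasoning handles $\mathsf{N}\,\mathsf{D}\,\vec{\bl{U}}=\vec{\bl{U}}$.

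The main obstacle is the singularity of $\Phi$: a direct computation of $\mathsf{D}\Phi$ would involve second distributional derivatives of $G$ and demand careful principal-value analysis. The factorization $\Phi=\mathsf{D}\mathsf{G}$ circumvents this by letting every differentiation be shifted onto the smooth factor, reducing everything at the end to the classical scalar fundamental solution identity $-\Delta G=\delta_0$.
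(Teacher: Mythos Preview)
Your proof is correct and considerably more economical than the paper's. Both arguments ultimately reduce to the scalar identity $-\Delta G=\delta_0$, but the routes differ substantially.

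The paper proceeds by explicit component-wise computation: it writes out $\Phi(\bl{y})\,\mathsf{D}\vec{\bl{V}}(\bl{x}-\bl{y})$ in full, identifies five structurally distinct scalar/vector terms, and for each performs an $\epsilon$-ball excision argument with integration by parts over $\mathbb{R}^3\setminus B_\epsilon(\bl{0})$, tracking boundary contributions on $\partial B_\epsilon$ and showing they either vanish or recover the point value. Only at the end do the surviving pieces recombine into $-G*\bm{\Delta}\bl{V}$, where the Hodge--Laplacian is recognized.

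You instead exploit the factorization $\Phi=\mathsf{D}\mathsf{G}$ at the outset, together with $\mathsf{D}^2=-\Delta\,\mathsf{I}_8$, and shift all derivatives onto the smooth factor via standard convolution rules. This bypasses the entire $\epsilon$-ball analysis: because the entries of $\Phi$ are first derivatives of $G$ and hence locally integrable in three dimensions, the distributional and pointwise derivatives coincide, and no principal-value subtlety arises. The paper's hands-on approach has the merit of being fully explicit and self-contained, but yours isolates the essential algebraic content ($\mathsf{D}^2=-\Delta$) and delegates the analysis to the classical scalar result, which is cleaner and generalizes more readily.
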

\begin{proof}
		We first show that for $\vec{\bl{U}}\in (C^{\infty}(\mathbb{R}^3)^8)'$, 
	\begin{equation}
	\langle\mathsf{N}\,\mathsf{D}\vec{\mathbf{U}},\vec{\mathbf{V}} \rangle = \langle\vec{\mathbf{U}},\vec{\mathbf{V}} \rangle
	\end{equation}
	for all $\vec{\bl{V}}\in C^{\infty}_0(\mathbb{R}^3)^8$.
	
	The argument is inspired by the proof of \cite[Thm.1]{evans2010partial}. Let $\bl{e}_i\in\mathbb{R}^3$ be the vector with $1$ at the i-th entry and zeros elsewhere, $i=1,2,3$. Since
	\begin{equation}
	\mathsf{N}\vec{\bl{V}}=\int_{\mathbb{R}^3}\Phi\bra{\bl{x}-\bl{y}}\vec{\bl{V}}\bra{\bl{y}}\dif\bl{y} = \int_{\mathbb{R}^3}\Phi\bra{\bl{y}}\vec{\bl{V}}\bra{\bl{x}-\bl{y}}\dif\bl{y},
	\end{equation}
	we have 
	\begin{equation}
	\frac{\mathsf{N}\vec{\bl{V}}\bra{\bl{x}+h\bl{e}_i}- \mathsf{N}\vec{\bl{V}}\bra{\bl{x}}}{h}=\int_{\mathbb{R}^3}\Phi\bra{\bl{y}}    \frac{\vec{\bl{V}}\bra{\bl{x}+h\bl{e}_i-\bl{y}}- \vec{\bl{V}}\bra{\bl{x}-\bl{y}}}{h}\dif\bl{y}.
	\end{equation}
	Hence,
	\begin{equation}\label{eq: Dirac of Newton inside integral}
	\mathsf{D}_{\bl{x}}\mathsf{N}\vec{\bl{V}}\bra{\bl{x}}=\int_{\mathbb{R}^3}\Phi\bra{\bl{y}}\mathsf{D}\vec{\bl{V}}\bra{\bl{x}-\bl{y}}\dif\bl{y},
	\end{equation}
	because the assumption that $\vec{\bl{V}}$ is smooth and compactly supported guarantees that
	\begin{equation}
	\frac{\vec{\bl{V}}\bra{\bl{x}+h\bl{e}_i-\bl{y}}- \vec{\bl{V}}\bra{\bl{x}-\bl{y}}}{h} \rightarrow \frac{\partial}{\partial\bl{x}_i}\vec{\bl{V}}\bra{\bl{x}-\bl{y}}
	\end{equation}
	uniformly for $h\rightarrow 0$. The main idea is to isolate $\Phi$'s singularity at the origin by splitting the right hand side of \cref{eq: Dirac of Newton inside integral} into two integrals as
	\begin{equation}\label{eq: split integral isolate singularity}
	\mathsf{D}_{\bl{x}}\mathsf{N}\vec{\bl{V}}\bra{\bl{x}} = \underbrace{\int_{B_{\epsilon}\bra{\bl{0}}}\Phi\bra{\bl{y}}\mathsf{D}\vec{\bl{V}}\bra{\bl{x}-\bl{y}}\dif\bl{y}}_{I_{\epsilon}} + \underbrace{\int_{\mathbb{R}^3\backslash B_{\epsilon}\bra{\bl{0}}}\Phi\bra{\bl{y}}\mathsf{D}\vec{\bl{V}}\bra{\bl{x}-\bl{y}}\dif\bl{y}}_{J_{\epsilon}}
	\end{equation}
	whose limits as $\epsilon\rightarrow 0$ we can control. 
	
	The main  difficulty is that we cannot readily mimic the standard proof commonly given for the Poisson equation, because the integration by parts formula supplied for the product of two vectors by \cref{eq: IBP Dirac} is not applicable to the matrix--vector multiplication involved in the integrands of \cref{eq: split integral isolate singularity}. The analysis of
	\begin{multline}
	\Phi\bra{\bl{y}}\mathsf{D}\vec{\bl{V}}\bra{\bl{x}-\bl{y}}=\\ 
	\begin{pmatrix}
	-{\color{alizarin}\nabla G\bra{\bl{y}}\cdot \nabla V_0\bra{\bl{x}-\bl{y}}} -{\color{ao}\nabla G\bra{\bl{y}}\cdot \mathbf{curl}\,\bl{V}_1\bra{\bl{x}-\bl{y}}}\\
	- {\color{orange}\text{div}\,\bl{V}_1\bra{\bl{x}-\bl{y}}\nabla G\bra{\bl{y}}}-{\color{violet}\nabla G\bra{\bl{y}}\times \nabla V_3\bra{\bl{x}-\bl{y}}} + {\color{teal}\nabla G\bra{\bl{y}}\times \bl{curl}\, \bl{V}_1\bra{\bl{x}-\bl{y}}}\\
	-{\color{violet}\nabla G\bra{\bl{y}}\times\nabla V_0\bra{\bl{x}-\bl{y}}}+{\color{teal}\nabla G\bra{\bl{y}}\times\bl{curl}\,\mathbf{V}_2\bra{\bl{x}-\bl{y}}} - {\color{orange}\text{div}\,\bl{V}_2\bra{\bl{x}-\bl{y}}\nabla G\bra{\bl{y}}}\\
	-{\color{alizarin}\nabla G\bra{\bl{y}}\cdot \nabla V_3\bra{\bl{x}-\bl{y}}} +{\color{ao} \nabla G\bra{\bl{y}}\cdot \bl{curl}\,\bl{V}_2\bra{\bl{x}-\bl{y}}}
	\end{pmatrix}
	\end{multline}
	is carried out component-wise.
	
	There are five different types of terms whose limit need to be investigated. Let $\bl{V}\in (C^{\infty}_0(\mathbb{R}^3))^3$ and $V\in C^{\infty}_0(\mathbb{R}^3)$ be arbitrary fields. To ease the reading, we write $V_{\bl{x}}(\bl{y}):=V(\bl{x}-\bl{y})$ and $\bl{V}_{\bl{x}}(\bl{y}):=\bl{V}(\bl{x}-\bl{y})$ . We denote by $\bl{n}_{\epsilon}$ the unit normal vector field pointing towards the interior of $B_{\epsilon}\bra{\bl{0}}$.
	
	Integrating by parts using that $\Delta G = 0$ in $\mathbb{R}^3\backslash\bra{\bl{0}}$ and $\bl{curl}\circ\nabla \equiv \bl{0}$, we find that
	\begin{multline}
	\int_{\mathbb{R}^3\backslash B_{\epsilon}\bra{\bl{0}}}{\color{alizarin}\nabla G\bra{\bl{y}}\cdot \nabla V\bra{\bl{x}-\bl{y}}}\dif\bl{y} =  \int_{\partial B_{\epsilon}\bra{\bl{0}}}\nabla G\bra{\bl{y}}\cdot\bl{n}_{\epsilon}\bra{\bl{y}}V\bra{\bl{x}-\bl{y}}\dif\sigma\bra{\bl{y}}\\
	=\frac{1}{4\pi}\int_{\partial B_{\epsilon}\bra{\bl{0}}}\frac{V\bra{\bl{x}-\bl{y}}}{\abs{\bl{y}}^3}\bra{-\bl{y}\cdot\frac{\bl{y}}{\abs{\bl{y}}}}\dif\sigma\bra{\bl{y}}
	=-\frac{1}{4\pi\epsilon^2}\int_{\partial B_{\epsilon}\bra{\bl{0}}}V\bra{\bl{x}-\bl{y}}\dif\sigma\bra{\bl{y}}\\
	=-\dashint_{\partial B_{\epsilon}\bra{\bl{x}}}V\bra{\bl{y}}\dif\sigma\bra{\bl{y}}\xrightarrow[\epsilon\rightarrow 0]{} - V\bra{\bl{x}}
	\end{multline}
	and
	\begin{multline}
	\int_{\mathbb{R}^3\backslash B_{\epsilon}\bra{\bl{0}}}{\color{ao}\nabla G\bra{\bl{y}}\cdot \mathbf{curl}\,\bl{V}\bra{\bl{x}-\bl{y}}}\dif\bl{y} \\
	=-\int_{\partial B_{\epsilon}\bra{\bl{0}}}\bra{\nabla  G\bra{\bl{y}}\times\bl{n}_{\epsilon}\bra{\bl{y}}}\cdot\bl{V}\bra{\bl{x}-\bl{y}}\dif\sigma\bra{\bl{y}}\\
	=-\frac{1}{4\pi\epsilon^4}\int_{\partial B_{\epsilon}\bra{\bl{0}}}\bra{\bl{y}\times\bl{y}}\cdot\bl{V}\bra{\bl{x}-\bl{y}}\dif\sigma\bra{\bl{y}}
	=0.
	\end{multline}
	Similarly, integrating by parts component-wise yields
	\begin{multline}
	\int_{\mathbb{R}^3\backslash B_{\epsilon}\bra{\bl{0}}}{\color{violet}\nabla G\bra{\bl{y}}\times \nabla V_{\bl{x}}(\bl{y})} \dif\bl{y}\\
	 =
	\int_{\mathbb{R}^3\backslash B_{\epsilon}\bra{\bl{0}}}
	\begin{pmatrix}
	\partial_2 G\bra{\bl{y}}\partial_3 V_{\bl{x}}(\bl{y}) - \partial_3 G\bra{\bl{y}}\partial_2V_{\bl{x}}(\bl{y})\\
	\partial_3 G\bra{\bl{y}}\partial_1 V_{\bl{x}}(\bl{y}) - \partial_1 G\bra{\bl{y}}\partial_3V_{\bl{x}}(\bl{y})\\
	\partial_1 G\bra{\bl{y}}\partial_2 V_{\bl{x}}(\bl{y}) - \partial_2 G\bra{\bl{y}}\partial_1V_{\bl{x}}(\bl{y})
	\end{pmatrix} \dif\bl{y}\\
	=
	\int_{\mathbb{R}^3\backslash B_{\epsilon}\bra{\bl{0}}}
	\begin{pmatrix}
	G\bra{\bl{y}}\partial_2\partial_3 V_{\bl{x}}(\bl{y}) - G\bra{\bl{y}}\partial_3\partial_2V_{\bl{x}}(\bl{y})\\
	G\bra{\bl{y}}\partial_3\partial_1 V_{\bl{x}}(\bl{y}) - G\bra{\bl{y}}\partial_1\partial_3V_{\bl{x}}(\bl{y})\\
	G\bra{\bl{y}}\partial_1\partial_2 V_{\bl{x}}(\bl{y}) - G\bra{\bl{y}}\partial_2\partial_1V_{\bl{x}}(\bl{y})
	\end{pmatrix} \dif\bl{y}\\
	+
	\int_{\partial B_{\epsilon}\bra{\bl{0}}}
	\begin{pmatrix}
	- \bra{\bl{n}_{\epsilon}}_2\bra{\bl{y}}G\bra{\bl{y}}\partial_3 V_{\bl{x}}(\bl{y}) + \bra{\bl{n}_{\epsilon}}_3\bra{\bl{y}}G\bra{\bl{y}}\partial_2V_{\bl{x}}(\bl{y})\\
	- \bra{\bl{n}_{\epsilon}}_2\bra{\bl{y}}G\bra{\bl{y}}\partial_1 V_{\bl{x}}(\bl{y}) + \bra{\bl{n}_{\epsilon}}_1\bra{\bl{y}}G\bra{\bl{y}}\partial_3V_{\bl{x}}(\bl{y})\\
	-\bra{\bl{n}_{\epsilon}}_1\bra{\bl{y}} G\bra{\bl{y}}\partial_2 V_{\bl{x}}(\bl{y}) + \bra{\bl{n}_{\epsilon}}_2\bra{\bl{y}}G\bra{\bl{y}}\partial_1V_{\bl{x}}(\bl{y})
	\end{pmatrix} \dif\bl{y}.
	\end{multline}
	Since $V$ is smooth everywhere in $\mathbb{R}^3$, partial derivatives commute and the volume integral vanishes, leading to
	\begin{align}
	\int_{\mathbb{R}^3\backslash B_{\epsilon}\bra{\bl{0}}}{\color{violet}\nabla G\bra{\bl{y}}\times \nabla V_{\bl{x}}(\bl{y})} \dif\bl{y} = - \int_{\partial B_{\epsilon}\bra{\bl{0}}} G\bra{\bl{y}}\bl{n}_{\epsilon}\bra{\bl{y}}\times\nabla V_{\bl{x}}(\bl{y})\dif\sigma\bra{\bl{y}}.
	\end{align}
	This integral vanishes under the limit $\epsilon\rightarrow 0$, because
	\begin{multline}\label{eq: boundary term is O epsilon}
	\sup_{\bl{x}\in\mathbb{R}^3} \,\abs{\int_{\partial B_{\epsilon}\bra{\bl{0}}} G\bra{\bl{y}}\bl{n}_{\epsilon}\bra{\bl{y}}\times\nabla V\bra{\bl{x}-\bl{y}}\dif\sigma\bra{\bl{y}}}\\
	 \leq \norm{\nabla V}_{\infty} \int_{\partial B_{\epsilon}\bra{\bl{0}}} \abs{G\bra{\bl{y}}}\dif\sigma\bra{\bl{y}}=\mathcal{O}\bra{\epsilon}.
	\end{multline}
	Moving on to the next term, one eventually obtains from similar calculations that
	\begin{multline}
	\int_{\mathbb{R}^3\backslash B_{\epsilon}\bra{\bl{0}}}{\color{teal}\nabla G\bra{\bl{y}}\times\bl{curl}\,\mathbf{V}_\bl{x}(\bl{y})} \dif\bl{y} = \int_{\mathbb{R}^3\backslash B_{\epsilon}\bra{\bl{0}}}G\bra{\bl{y}}\bl{curl}\,\bl{curl}\,\mathbf{V}_\bl{x}(\bl{y}) \dif\bl{y}\\
	+\int_{\partial B_{\epsilon}\bra{\bl{0}}} G\bra{\bl{y}}\bra{ \bl{curl}\,\mathbf{V}_\bl{x}(\bl{y})\times\bl{n}_{\epsilon}\bra{\bl{y}}}\dif\sigma\bra{\bl{y}}. 
	\end{multline}
	Since $\norm{\bl{curl}\,\bl{V}}_{\infty} < \infty$, the boundary integral on the right hand side vanishes under the limit by repeating the argument of \cref{eq: boundary term is O epsilon}.
	Finally, commuting partial derivatives after integrating by parts also yields
	\begin{multline}
	\int_{\mathbb{R}^3\backslash B_{\epsilon}\bra{\bl{0}}}{\color{orange}\text{div}\,\bl{V}\bra{\bl{x}-\bl{y}}\nabla G\bra{\bl{y}}}\dif\bl{y}\\
	 = \int_{\mathbb{R}^3\backslash B_{\epsilon}\bra{\bl{0}}} G\bra{\bl{y}}\nabla\text{div}\bl{V}\bra{\bl{x}-\bl{y}}
	-\int_{\partial B_{\epsilon}\bra{\bl{0}}} G\bra{\bl{y}}\text{div}\bl{V}\bra{\bl{x}-\bl{y}}\bl{n}_{\epsilon}\bra{\bl{y}}\dif\sigma\bra{\bl{y}}
	\end{multline}
	
	Putting the two previous calculations together, we find that
	\begin{multline}
	\lim_{\epsilon\rightarrow 0}\int_{\mathbb{R}^3\backslash B_{\epsilon}\bra{\bl{0}}}{\color{teal}\nabla G\bra{\bl{y}}\times\bl{curl}\,\mathbf{V}\bra{\bl{x}-\bl{y}}}  -{\color{orange}\text{div}\,\bl{V}\bra{\bl{x}-\bl{y}}\nabla G\bra{\bl{y}}}\dif\bl{y}
	\\= - \lim_{\epsilon\rightarrow 0}\int_{\mathbb{R}^3\backslash B_{\epsilon}\bra{\bl{0}}}G\bra{\bl{y}}\bm{\Delta}\bl{V}\bra{\bl{x}-\bl{y}}\dif\bl{y}
	= \bl{V}\bra{\bl{x}},
	\end{multline}
	where we recognized the vector (Hodge-) Laplace operator $-\bm{\Delta}\equiv\bl{curl}\,\bl{curl}-\nabla\,\text{div}$.
	
	We have found that $J_{\epsilon}\longrightarrow \vec{\bl{V}}\bra{\bl{x}}$ as $\epsilon\rightarrow 0$. Meanwhile,
	\begin{equation}
	\norm{I_{\epsilon}}_{\infty} \leq \norm{\mathsf{D}\vec{\bl{V}}}_{\infty} \int_{B_{\epsilon}\bra{\bl{0}}}\norm{\Phi}_{\infty}\dif\bl{y} = \mathcal{O}\bra{\int_{B_{\epsilon}\bra{\bl{0}}}\norm{\nabla G}_{\infty}\dif\bl{y}}=\mathcal{O}\bra{\epsilon}.
	\end{equation}
	
	The calculations for $\vec{\bl{U}} =\mathsf{D}\,\mathsf{N}\,\vec{\bl{U}}$ follow similarly starting from \eqref{eq: Dirac of Newton inside integral}.
\end{proof}

In light of \Cref{prop: fundamental sol}, we say that the kernel $\Phi$ of $\mathsf{N}$ is a fundamental solution for the Dirac operator.

\subsection{Surface potentials}\label{sec: properties of the potentials}
Adopting the perspective on first-kind boundary integral operators from \cite{costabel1988boundary}, \cite{mclean2000strongly}, \cite{sauter2010boundary} and \cite{claeys2017first}---in the later works for the study of second-order elliptic operators---for the first-order Dirac operator, we define the surface potentials
\begin{align}
\mathcal{L}_{\mathsf{T}}\bra{\vec{\bl{a}}} &:= \mathsf{N}\bra{ \gamma_{\mathsf{T}}'\vec{\bl{a}}},   &\forall\,\vec{\bl{a}}=\bra{\mathcal{a}_0,\bl{a}_1,a_2}\in\mathcal{H}_{\mathsf{R}},\\
\mathcal{L}_{\mathsf{R}}(\vec{\bl{b}}) &:= -\mathsf{N}\bra{ \gamma_{\mathsf{R}}'\vec{\bl{b}}},  &\forall\,\vec{\bl{b}}=\bra{b_0,\bl{b}_1,\mathcal{b}_2}\in\mathcal{H}_{\mathsf{T}},
\end{align}
where the mappings $\gamma_{\mathsf{T}}':\mathcal{H}_{\mathsf{R}}=\HT'\rightarrow \bl{H}_{\text{loc}}(\mathsf{D},\mathbb{R}^3\backslash\overline{\Omega})'$ and $\gamma_{\mathsf{R}}':\mathcal{H}_{\mathsf{T}}=\HR'\rightarrow \bl{H}_{\text{loc}}(\mathsf{D},\mathbb{R}^3\backslash\overline{\Omega})'$ are adjoint to the trace operators $\gamma_{\mathsf{T}}$ and $\gamma_{\mathsf{R}}$ defined in \eqref{eq: little wedge and vee mappings}.

It will be convenient to denote by $\Phi_{\bl{x}}$ the map $\bl{y}\mapsto\Phi\bra{\bl{x}-\bl{y}}$. Let $\vec{\mathbf{E}}_j\in\mathbb{R}^8$ denote the constant vector with $1$ at the $j$-th entry and zeros elsewhere, $j=1,...,8$. Similarly for $\bl{E}_k\in\mathbb{R}^3$, $k=1,2,3$.

Adapting the calculations found in \cite[Sec. 4.2]{claeys2017first}, we will establish integral representation formulas for these potentials by splitting the pairings into their components.	

\begin{lemma}\label{lem: int rep of boundary potential}
	Given  $\vec{\bl{a}}\in\mathcal{H}_{\mathsf{R}}$ and $\vec{\bl{b}}\in\mathcal{H}_{\mathsf{T}}$, it holds for $\bl{x}\in\Omega\backslash\Gamma$ that
	\begin{subequations}
		\begin{align}
		\mathcal{L}_{\mathsf{T}}\bra{\vec{\bl{a}}}\bra{\bl{x}}\cdot \vec{\bl{E}}_j &= -\llangle\vec{\bl{a}}\,,\,\gamma^-_{\mathsf{T}}\bra{\Phi_{\bl{x}}\,\vec{\bl{E}}_j}\rrangle_{\Gamma},\\
		\mathcal{L}_{\mathsf{R}}(\vec{\bl{b}})\bra{\bl{x}}\cdot \vec{\bl{E}}_j &=\llangle\vec{\bl{b}}\,,\,\gamma^-_{\mathsf{R}}\bra{\Phi_{\bl{x}}\,\vec{\bl{E}}_j}\rrangle_{\Gamma}.
		\end{align}
	\end{subequations}
\end{lemma}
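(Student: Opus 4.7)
\medskip
\noindent\textbf{Proof proposal.} The plan is to unravel the definition of $\mathcal{L}_{\mathsf{T}}$ and $\mathcal{L}_{\mathsf{R}}$, transform the matrix--vector product in the Newton potential using the anti-symmetry of $\Phi$, and then invoke the definition of the adjoint trace. Throughout, the standing observation is that for a fixed $\bl{x}\in\Omega\setminus\Gamma$, the map $\bl{y}\mapsto \Phi_{\bl{x}}(\bl{y})\vec{\bl{E}}_j$ is smooth in a neighborhood of $\Gamma$, so both its interior trace $\gamma_{\mathsf{T}}^-(\Phi_{\bl{x}}\vec{\bl{E}}_j)\in\HT$ and $\gamma_{\mathsf{R}}^-(\Phi_{\bl{x}}\vec{\bl{E}}_j)\in\HR$ are defined classically and the duality pairings on $\Gamma$ in the statement make sense.

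\medskip
First I would interpret the value $\mathcal{L}_{\mathsf{T}}(\vec{\bl{a}})(\bl{x})\cdot\vec{\bl{E}}_j$ as the pairing of the distribution $\gamma_{\mathsf{T}}'\vec{\bl{a}}$, which is supported on $\Gamma$, against the test vector field $\Phi_{\bl{x}}^{\top}\vec{\bl{E}}_j$. Indeed, writing $\mathsf{N}(\gamma_{\mathsf{T}}'\vec{\bl{a}})=\Phi*\gamma_{\mathsf{T}}'\vec{\bl{a}}$ and isolating the $j$-th component yields, at least formally,
\begin{equation*}
\mathcal{L}_{\mathsf{T}}(\vec{\bl{a}})(\bl{x})\cdot\vec{\bl{E}}_j=\bigl\langle\gamma_{\mathsf{T}}'\vec{\bl{a}}\,,\,\Phi_{\bl{x}}^{\top}\vec{\bl{E}}_j\bigr\rangle_{\mathbb{R}^3}.
\end{equation*}
To make this rigorous one can either approximate $\gamma_{\mathsf{T}}'\vec{\bl{a}}$ by smooth compactly supported fields and pass to the limit using continuity of $\mathsf{N}$, or restrict the convolution to a neighborhood of $\Gamma$ avoiding $\bl{x}$ (where $\Phi_{\bl{x}}\vec{\bl{E}}_j\in\bl{H}_{\text{loc}}(\mathsf{D},\cdot)$) and use \Cref{lem: symmetry Newton} together with the extension of $\mathsf{N}$ by duality.

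\medskip
Next, I would invoke \Cref{lem: anti-symmetric properties of fundamental sol}, specifically the pointwise relation $\Phi(\bl{z})\vec{\bl{U}}\cdot\vec{\bl{V}}=-\vec{\bl{U}}\cdot\Phi(\bl{z})\vec{\bl{V}}$, which is equivalent to $\Phi(\bl{z})^{\top}=-\Phi(\bl{z})$. Applied to the expression above this produces the crucial sign flip
\begin{equation*}
\mathcal{L}_{\mathsf{T}}(\vec{\bl{a}})(\bl{x})\cdot\vec{\bl{E}}_j=-\bigl\langle\gamma_{\mathsf{T}}'\vec{\bl{a}}\,,\,\Phi_{\bl{x}}\vec{\bl{E}}_j\bigr\rangle_{\mathbb{R}^3}.
\end{equation*}
Then the definition of the adjoint trace, $\langle\gamma_{\mathsf{T}}'\vec{\bl{a}},\vec{\bl{V}}\rangle_{\mathbb{R}^3}=\llangle\vec{\bl{a}},\gamma_{\mathsf{T}}^-\vec{\bl{V}}\rrangle_{\Gamma}$, applied with $\vec{\bl{V}}=\Phi_{\bl{x}}\vec{\bl{E}}_j$ (which is legitimate by the smoothness observation at the start) gives the first identity. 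The second identity follows verbatim by the same route, the only differences being the extra minus sign built into the definition $\mathcal{L}_{\mathsf{R}}(\vec{\bl{b}}):=-\mathsf{N}(\gamma_{\mathsf{R}}'\vec{\bl{b}})$, which cancels the sign produced by the anti-symmetry, and the use of $\gamma_{\mathsf{R}}'$ in place of $\gamma_{\mathsf{T}}'$.

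\medskip
The main obstacle is strictly a bookkeeping one: one has to justify that applying the distribution $\gamma_{\mathsf{T}}'\vec{\bl{a}}$ (which, a priori, is only continuous on $\bl{H}_{\text{loc}}(\mathsf{D},\mathbb{R}^3\setminus\overline{\Omega})$-type spaces) to the vector field $\Phi_{\bl{x}}\vec{\bl{E}}_j$ is the same operation as extracting the $j$-th component of the convolution $\mathsf{N}(\gamma_{\mathsf{T}}'\vec{\bl{a}})$ at the point $\bl{x}$. The cleanest way to resolve this is to cut out a ball around $\bl{x}$ and argue componentwise, exactly as in the matching step of \cite[Sec.~4.2]{claeys2017first}, where this duality-based representation is carried out for the surface potentials of the Hodge--Laplacian. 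The analogy is essentially verbatim once \Cref{lem: anti-symmetric properties of fundamental sol} plays the role of the symmetry of the scalar Laplace fundamental solution.
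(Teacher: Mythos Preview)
Your argument is correct and follows the same route as the paper: both hinge on the anti-symmetry of $\Phi$ from \Cref{lem: anti-symmetric properties of fundamental sol} to produce the sign flip, followed by the definition of the adjoint trace, with a density step to pass from smooth boundary data to general $\vec{\bl{a}}\in\HR$. The only cosmetic difference is that the paper justifies the pointwise identity by testing $\mathcal{L}_{\mathsf{T}}(\vec{\bl{a}})$ against $V\vec{\bl{E}}_j$ for arbitrary $V\in C_0^\infty(\mathbb{R}^3)$ and swapping the order of integration via Fubini, which avoids the ``bookkeeping'' issue you flag about pairing $\gamma_{\mathsf{T}}'\vec{\bl{a}}$ directly with $\Phi_{\bl{x}}\vec{\bl{E}}_j$ at a fixed $\bl{x}$.
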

\begin{proof}
	Let $V\in C^{\infty}_{0}(\mathbb{R}^3)$ and suppose that $\vec{\bl{a}}$ is the trace of a smooth $8$-dimensional vector-field. Using Fubini's theorem and the fact that $\Phi$ is smooth away from the origin,
	\begin{align}
	\langle\mathsf{N} \bra{\gamma_{\mathsf{T}}'\vec{\bl{a}}},\,V\vec{\bl{E}}_j\rangle_{\mathbb{R}^3}
	 &= \llangle\vec{\bl{a}},\gamma_{\mathsf{T}}\mathsf{N}\bra{V\vec{\bl{E}}_j}\rrangle_{\Gamma}\\
	&=\int_{\Gamma}\vec{\bl{a}}\bra{\bl{y}}\cdot\gamma_{\mathsf{T}}\int_{\mathbb{R}^3}\Phi\bra{\bl{y}-\bl{x}}V\bra{\bl{x}}\vec{\bl{E}}_j\bra{\bl{x}}\dif\bl{x}\dif\sigma\bra{\bl{y}}\\
	&\starequal-\int_{\mathbb{R}^3} V\bra{\bl{x}}\bra{\int_{\Gamma}\vec{\bl{a}}\bra{\bl{y}}\cdot\gamma_{\mathsf{T}}\Phi{\bra{\bl{x}-\bl{y}}}\vec{\bl{E}}_j\dif\sigma\bra{\bl{y}}}\dif\bl{x},\label{eq: int rep formula of components}
	\end{align}
	where the sign was obtained in ($*$) thanks to \Cref{lem: anti-symmetric properties of fundamental sol}. The integrals on the right-hand side of \eqref{eq: int rep formula of components} can be extended to duality pairings by a standard density argument exploiting \Cref{lem: traces are continuous and surjective}.
	
	Similar calculations can be carried out for $\mathcal{L}_{\mathsf{R}}$.
\end{proof}
In particular,
\begin{align}
\Phi_{\bl{x}}\bra{\bl{y}}\,\vec{\mathbf{E}}_1
&= 
\begin{pmatrix}
0\\
\nabla G\bra{\bl{x}-\bl{y}}\\
\bl{0}\\
0
\end{pmatrix},
&
\Phi_{\bl{x}}\bra{\bl{y}}\,\vec{\mathbf{E}}_8&=
\begin{pmatrix}
0\\
\bl{0}\\
-\nabla G\bra{\bl{x}-\bl{y}}\\
0
\end{pmatrix},
\end{align}
\begin{align}
\Phi_{\bl{x}}\bra{\bl{y}}\,\vec{\mathbf{E}}_i &= 
\begin{pmatrix}
-\frac{\partial}{\partial\bl{z}_{\mu(i)}}G\bra{\bl{z}}\, \\
\bl{0}\\
\nabla G\bra{\bl{z}}\times\bl{E}_{\mu\bra{i}}\\
0
\end{pmatrix}\Big\vert_{\bl{z}=\bl{x}-\bl{y}},
&
\Phi_{\bl{x}}\bra{\bl{y}}\,\vec{\mathbf{E}}_k &=
\begin{pmatrix}
0\\
\nabla G\bra{\bl{z}}\times\bl{E}_{\nu\bra{k}}\\
\bl{0}\\
\frac{\partial}{\partial\bl{z}_{\nu(k)}}G\bra{\bl{z}}\,
\end{pmatrix}\Big\vert_{\bl{z}=\bl{x}-\bl{y}},
\end{align}
for $i=2,3,4$, $k=5,6,7$, $\mu\bra{i} = i-1$ and $\nu\bra{k}=k-4$. 

Therefore, we can evaluate
\begin{subequations}
	\begin{align}
	\mathcal{L}_{\mathsf{T}}\bra{\vec{\bl{a}}}\bra{\bl{x}}\cdot \vec{\bl{E}}_1 
	&= -\int_{\Gamma}\bl{a}_1\bra{\bl{y}}\cdot\nabla G\bra{\bl{x}-\bl{y}}\dif\sigma\bra{\bl{y}}
	\\
	\mathcal{L}_{\mathsf{T}}\bra{\vec{\bl{a}}}\bra{\bl{x}}\cdot\vec{\bl{E}}_i &= \int_{\Gamma}\mathcal{a}_0\bra{\bl{y}}\,\partial_{\mu(i)}G\bra{\bl{x}-\bl{y}}\dif\sigma\bra{\bl{y}}\\
	&\qquad\qquad-\int_{\Gamma}a_2\bra{\bl{y}}\,\bra{\nabla G\bra{\bl{x}-\bl{y}}\times\bl{E}_{\mu\bra{i}}}\cdot\bl{n}\bra{\bl{y}}\dif\sigma\bra{\bl{y}}\nonumber\\
	&=\partial_{\mu(i)}\int_{\Gamma}\mathcal{a}_0\bra{\bl{y}}G\bra{\bl{x}-\bl{y}}\dif\sigma\bra{\bl{y}}\nonumber\\
	&\qquad\qquad+\bl{E}_{\mu\bra{i}}\cdot\int_{\Gamma}a_2\bra{\bl{y}}\nabla G\bra{\bl{x}-\bl{y}}\times\bl{n}\bra{\bl{y}}\dif\sigma\bra{\bl{y}}\nonumber
\end{align}
\begin{align}
	\mathcal{L}_{\mathsf{T}}\bra{\vec{\bl{a}}}\bra{\bl{x}}\cdot\vec{\bl{E}}_k
	&=-\int_{\Gamma}\bl{a}_1\bra{\bl{y}}\cdot\bra{\nabla G\bra{\bl{x}-\bl{y}}\times\bl{E}_{\nu\bra{k}}}\dif\sigma\bra{\bl{y}}\\
	&=\bl{E}_{\nu\bra{k}}\cdot\int_{\Gamma}\nabla_{\bl{y}} G\bra{\bl{x}-\bl{y}}\times\bl{a}_1\bra{\bl{y}}\dif\sigma\bra{\bl{y}}\nonumber
	\\
	\mathcal{L}_{\mathsf{T}}\bra{\vec{\bl{a}}}\bra{\bl{x}}\cdot \vec{\bl{E}}_8 &= \int_{\Gamma}a_2\bra{\bl{y}}\nabla_{\bl{y}} G_{\bl{x}}\bra{\bl{y}}\cdot\bl{n}\bra{\bl{y}}\dif\sigma\bra{\bl{y}},
	\end{align}
\end{subequations}
where we have used the fact that $\bl{a}_1\in\mathbf{H}^{-1/2}(\text{div}_\Gamma,\Gamma)$ was ``tangential" to safely drop the trace $\gamma_t$ everywhere. Similarly as in the proof of \Cref{lem: int rep of boundary potential}, all these integrals should be understood as duality pairings and the following explicit representations do not only hold in the sense of distributions, but also pointwise on $\mathbb{R}^3\backslash\Gamma$.

We collect the above entries to obtain
\begin{greyFrame}
	\begin{equation}\label{eq: wedge potential}
	\mathcal{L}_{\mathsf{T}}\bra{\vec{\bl{a}}} =
	\begin{pmatrix}
	-\,\text{div}\,\bl{\Psi}\bra{\bl{a}_1}\\
	\nabla\psi\bra{\mathcal{a}_0} + \bl{curl}\,\bl{\Upsilon}\bra{a_2}\\
	\,\bl{curl}\,\bl{\Psi}\bra{\bl{a}_1}\\
	\text{div}\,\bl{\Upsilon}\bra{a_2}
	\end{pmatrix},\quad \text{pointwise on }\mathbb{R}^3\backslash\Gamma,
	\end{equation}
\end{greyFrame}
\noindent where we respectively recognize in
\begin{subequations}\label{eq: known scalar potentials}
	\begin{align}
	\psi(q)(\bl{x})&:=\int_{\Gamma}q(\bl{y})G\bra{\bl{x}-\bl{y}}\dif\sigma(\mathbf{y}), &\bl{x}\in\mathbb{R}^3\backslash\Gamma, \label{scalar single layer}\\
	\bm{\Psi}\bra{\mathbf{p}}\bra{\bl{x}}&:=\int_{\gamma}\mathbf{p}(\mathbf{y})G\bra{\bl{x}-\bl{y}}\dif\sigma(\mathbf{y}), &\bl{x}\in\mathbb{R}^3\backslash\Gamma,\label{vector single layer}\\
	\bl{\Upsilon}\bra{q}\bra{\bl{x}}&:=\int_{\Gamma}q\bra{\bl{y}}G\bra{\bl{x}-\bl{y}}\bl{n}(\bl{y})\dif\sigma\bra{\bl{y}}
	&\bl{x}\in\mathbb{R}^3\backslash\Gamma,\label{normal vector single layer}
	\end{align}
\end{subequations}
the well-known single layer,  vector single layer and normal vector single layer potentials. They notably enter \cref{eq: wedge potential} in the expression for the classical double layer potential $\text{div}\,\bl{\Upsilon}\bra{q}$ and for the Maxwell double layer potential $\bl{curl}\bl{\Psi}\bra{\bl{p}}$ as they arise in acoustic and electromagnetic scattering respectively.

Similarly, for $i=2,3,4$ and $k=5,6,7$,
\begin{subequations}
	\begin{align}
	\mathcal{L}_{\mathsf{R}}(\vec{\bl{b}})\bra{\bl{x}}\cdot \vec{\bl{E}}_1 &= \int_{\Gamma} b_0\bra{\bl{y}} \nabla G\bra{\bl{x}-\bl{y}}\cdot\bl{n}(\mathbf{y})\dif\sigma\bra{\bl{y}}\\
	\mathcal{L}_{\mathsf{R}}(\vec{\bl{b}})\bra{\bl{x}}\cdot \vec{\bl{E}}_i &=\int_{\Gamma}\bl{b}_1\bra{\bl{y}}\cdot\bra{\nabla G\bra{\bl{x}-\bl{y}}\times \bl{E}_{\mu\bra{i}}}\times \bl{n}(\bl{y})\dif\sigma\bra{\bl{y}}\\
	&=\int_{\Gamma}\bra{\nabla G\bra{\bl{x}-\bl{y}}\times \bl{E}_{\mu\bra{i}}}\cdot\bl{n}(\bl{y})\times\bl{b}_1\bra{\bl{y}}\dif\sigma\bra{\bl{y}}\nonumber\\
	&=\,\bl{E}_{\mu\bra{i}}\cdot\int_{\Gamma}\bra{\bl{n}(\bl{y})\times\bl{b}_1\bra{\bl{y}}}\times\nabla G\bra{\bl{x}-\bl{y}}\dif\sigma\bra{\bl{y}}\nonumber
	\end{align}
\begin{align}
	\mathcal{L}_{\mathsf{R}}(\vec{\bl{b}})\bra{\bl{x}}\cdot \vec{\bl{E}}_k&=\int_{\Gamma}b_0\bra{\bl{y}}\bra{\nabla G\bra{\bl{x}-\bl{y}}\times\bl{E}_{\nu\bra{k}}}\cdot\bl{n}(\bl{y})\dif\sigma\bra{\bl{y}}\\
	&\qquad\qquad + \int_{\Gamma}\mathcal{b}_2\bra{\bl{y}}\partial_jG\bra{\bl{x}-\bl{y}}\dif\sigma\bra{\bl{y}}\nonumber\\
	&=\,\bl{E}_{\nu\bra{k}}\cdot\int_{\Gamma}b_0\bra{\bl{y}}\bl{n}(\bl{y})\times\nabla G\bra{\bl{x}-\bl{y}}\dif\sigma\bra{\bl{y}}\nonumber\\
	&\qquad\qquad  + \int_{\Gamma}\mathcal{b}_2\bra{\bl{y}}\partial_jG\bra{\bl{x}-\bl{y}}\dif\sigma\bra{\bl{y}}\nonumber\\
	\mathcal{L}_{\mathsf{R}}(\vec{\bl{b}})\bra{\bl{x}}\cdot\vec{\bl{E}}_8 &=-\int_{\Gamma}\bl{b}_1\bra{\bl{y}}\cdot\nabla G\bra{\bl{x}-\bl{y}}\times\bl{n}(\bl{y})\dif\sigma\bra{\bl{y}}\\
	&=-\int_{\Gamma}\nabla G\bra{\bl{x}-\bl{y}}\cdot\bl{n}(\bl{y})\times\bl{b}_1\bra{\bl{y}}\dif\sigma\bra{\bl{y}}\nonumber
	\end{align}
\end{subequations}
so that we have
\begin{greyFrame}
	\begin{equation}\label{eq: vee potential}
	\mathcal{L}_{\mathsf{R}}\bra{\vec{\bl{b}}} = 
	\begin{pmatrix}
	\text{div}\bl{\Upsilon}\bra{b_0}\\
	\bl{curl}\bl{\Psi}\bra{\bl{b}_1\times\bl{n}}\\
	-\,\bl{curl}\bl{\Upsilon}\bra{b_0} + \nabla\psi\bra{\mathcal{b}_2}\\
	\text{div}\bl{\Psi}\bra{\bl{b}_1\times\bl{n}}
	\end{pmatrix},\quad \text{pointwise on }\mathbb{R}^3\backslash\Gamma.
	\end{equation}
\end{greyFrame}

\subsection{Mapping properties of the surface potentials}
Fortunately, we already know a lot about each potential entering \cref{eq: wedge potential} and \cref{eq: vee potential}.
\begin{lemma}\label{lem: mapping properties potentials}
	The potentials  $\mathcal{L}_{\mathsf{T}}:\mathcal{H}_{\mathsf{R}}\rightarrow\bl{H}(\mathsf{D},\mathbb{R}^3\backslash\Gamma)$ and $\mathcal{L}_{\mathsf{R}}:\mathcal{H}_{\mathsf{T}}\rightarrow\bl{H}(\mathsf{D},\mathbb{R}^3\backslash\Gamma)$ explicitly given by \cref{eq: wedge potential} and \cref{eq: vee potential}  are continuous.
\end{lemma}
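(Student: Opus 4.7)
The plan is to verify continuity component-by-component by recognising every entry of the column vectors \eqref{eq: wedge potential} and \eqref{eq: vee potential} as a classical surface potential, so that the mapping properties of $\mathcal{L}_{\mathsf{T}}$ and $\mathcal{L}_{\mathsf{R}}$ reduce to well-known results for the single-layer $\psi$, the vector single-layer $\bm{\Psi}$, and the normal vector single-layer $\bl{\Upsilon}$ defined in \eqref{eq: known scalar potentials}. Specifically, the classical theory (see e.g.\ \cite{costabel1988boundary,mclean2000strongly,sauter2010boundary}) tells us that $\psi:H^{-1/2}(\Gamma)\to H^1(\mathbb{R}^3\backslash\Gamma)$, $\bm{\Psi}:\bm{H}^{-1/2}(\text{div}_\Gamma,\Gamma)\to \bm{H}(\mathbf{curl},\mathbb{R}^3\backslash\Gamma)\cap\bm{H}(\mathrm{div},\mathbb{R}^3\backslash\Gamma)$, and $\bl{\Upsilon}(q)=\bm{\Psi}(q\mathbf{n})$ share the same target, each continuously. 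The global $L^2$ (not merely $L^2_{\mathrm{loc}}$) behaviour in the unbounded component is obtained from the decay $|\nabla G(\bl{x}-\bl{y})|=O(|\bl{x}|^{-2})$ as $|\bl{x}|\to\infty$, which is sufficient in $\mathbb{R}^3$ to make every entry of \eqref{eq: wedge potential}--\eqref{eq: vee potential} square-integrable at infinity.

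I would then treat each of the eight scalar entries separately. For $\mathcal{L}_{\mathsf{T}}$, the entry $-\mathrm{div}\,\bm{\Psi}(\bl{a}_1)$ can be rewritten via surface integration by parts as $-\psi(\mathrm{div}_\Gamma\bl{a}_1)$, which by \Cref{lem: properties of surface div and curl} reduces the analysis to $\mathrm{div}_\Gamma\bl{a}_1\in H^{-1/2}_*(\Gamma)$ and a single-layer estimate. The entries $\nabla\psi(\mathcal{a}_0)$, $\mathbf{curl}\,\bl{\Upsilon}(a_2)$, $\mathbf{curl}\,\bm{\Psi}(\bl{a}_1)$ and $\mathrm{div}\,\bl{\Upsilon}(a_2)$ are already $L^2$ fields by the $H^1$ mapping property of the underlying potential, and the requisite norm estimates
\[
\|\nabla\psi(\mathcal{a}_0)\|_{L^2(\mathbb{R}^3\backslash\Gamma)}\lesssim\|\mathcal{a}_0\|_{H^{-1/2}(\Gamma)}, \quad\text{etc.}
\]
are standard. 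The components of $\mathcal{L}_{\mathsf{R}}$ are handled identically once one notes that $\bl{b}_1\times\bl{n}\in\bm{H}^{-1/2}(\text{div}_\Gamma,\Gamma)$ whenever $\bl{b}_1\in\bm{H}^{-1/2}(\text{curl}_\Gamma,\Gamma)$, so that $\bm{\Psi}(\bl{b}_1\times\bl{n})$ lies again in the Maxwell space.

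Once both potentials are seen to take values in $(L^2(\mathbb{R}^3))^8$ with continuous dependence on the boundary data, the second part of membership in $\bl{H}(\mathsf{D},\mathbb{R}^3\backslash\Gamma)$ is free: because $\Phi$ is a fundamental solution by \Cref{prop: fundamental sol}, and because the source distributions $\gamma_\mathsf{T}'\vec{\bl{a}}$ and $\gamma_\mathsf{R}'\vec{\bl{b}}$ are supported on $\Gamma$, we obtain $\mathsf{D}\mathcal{L}_{\mathsf{T}}(\vec{\bl{a}})=0$ and $\mathsf{D}\mathcal{L}_{\mathsf{R}}(\vec{\bl{b}})=0$ pointwise on $\mathbb{R}^3\backslash\Gamma$. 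Thus $\|\mathsf{D}\mathcal{L}_{\mathsf{T}}(\vec{\bl{a}})\|_{L^2(\mathbb{R}^3\backslash\Gamma)}=0$, so the graph-norm estimate collapses to the $L^2$ estimate already established.

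The main obstacle I anticipate is bookkeeping rather than conceptual: one has to confirm that each of the regularity statements for $\psi$, $\bm{\Psi}$, $\bl{\Upsilon}$ is applied on the correct trace space, in particular using the identification $\bm{H}^{-1/2}(\text{curl}_\Gamma,\Gamma)\xrightarrow{\cdot\times\mathbf{n}}\bm{H}^{-1/2}(\text{div}_\Gamma,\Gamma)$ for $\mathcal{L}_{\mathsf{R}}$, and making the cross-component couplings induced by the tangential traces explicit. The only place where care beyond textbook citation is needed is the decay argument that upgrades the classical $H^1_{\mathrm{loc}}$ estimates to global $L^2$ membership on the unbounded component, which I would organise by splitting $\mathbb{R}^3$ into a large ball around $\overline{\Omega^-}$ plus its complement and using $|\nabla G|=O(|\bl{x}|^{-2})$ together with the duality pairings.
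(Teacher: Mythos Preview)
Your proposal is correct and follows essentially the same route as the paper: a component-by-component reduction to the classical mapping properties of $\psi$, $\bm{\Psi}$ and $\bl{\Upsilon}$, the identification $\bl{b}_1\times\bl{n}\in\bm{H}^{-1/2}(\text{div}_\Gamma,\Gamma)$ for $\mathcal{L}_{\mathsf{R}}$, and the $|\nabla G|=O(|\bl{x}|^{-2})$ decay to upgrade $L^2_{\text{loc}}$ to global $L^2$.

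The one organizational difference worth noting concerns the $\mathsf{D}$-part of the graph norm. The paper handles this by listing, for each entry, a mapping property strong enough that the relevant first-order operator ($\nabla$, $\bl{curl}$, $\text{div}$) again produces a locally square-integrable field; e.g.\ $\nabla\psi:H^{-1/2}(\Gamma)\to\mathbf{H}_{\text{loc}}(\mathbf{curl}^2)\cap\mathbf{H}_{\text{loc}}(\nabla\text{div})$ and $\bl{curl}\,\bl{\Psi}:\mathbf{H}^{-1/2}(\text{div}_\Gamma,\Gamma)\to\mathbf{H}_{\text{loc}}(\bl{curl})\cap\mathbf{H}_{\text{loc}}(\text{div})$, all cited from \cite{claeys2017first}. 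You instead short-circuit this step by invoking $\mathsf{D}\mathsf{N}=\id$ distributionally together with $\text{supp}(\gamma_\mathsf{T}'\vec{\bl{a}})\subset\Gamma$, concluding $\mathsf{D}\mathcal{L}_{\mathsf{T}}(\vec{\bl{a}})=0$ on $\mathbb{R}^3\backslash\Gamma$ so that the graph norm collapses to the $L^2$ norm. This is exactly the alternative argument the paper records in the Remark following \Cref{lem: Dirac of potential vanish}; it is cleaner and avoids the bookkeeping, at the mild cost of not exhibiting the extra regularity of the individual components. One small caution: your line ``$\bl{\Upsilon}(q)=\bm{\Psi}(q\mathbf{n})$ share the same target'' is imprecise, since $q\mathbf{n}$ is normal rather than tangential and does not lie in $\bm{H}^{-1/2}(\text{div}_\Gamma,\Gamma)$; the paper treats $\text{div}\,\bl{\Upsilon}$ and $\bl{curl}\,\bl{\Upsilon}$ via separate citations rather than through this identification.
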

\begin{proof}
	Recall that if $\bl{b}_1\in\mathbf{H}^{-1/2}\bra{\text{curl}_{\Gamma},\Gamma}$, then $\bl{n}\times \bl{b}_1 \in\mathbf{H}^{-1/2}\bra{\text{div}_{\Gamma},\Gamma}$. So the proof simply boils down to extracting from the discussion of Section 5 in \cite{claeys2017first} the mapping properties
	\begin{subequations}
		\begin{align}
		\nabla\psi:& \,H^{-1/2}\bra{\Gamma}\rightarrow\mathbf{H}_{\text{loc}}(\mathbf{curl}^2,\mathbb{R}^3\backslash \Gamma)\cap \mathbf{H}_{\text{loc}}(\nabla\text{div},\mathbb{R}^3\backslash \Gamma),\\
		\bl{\text{div}}\bl{\Upsilon}:&\,H^{1/2}\rightarrow H_{\text{loc}}^1(\Delta,\mathbb{R}^3\backslash \Gamma),\\
		\bl{curl}\,\bl{\Upsilon}:&\,\mathbf{H}^{-1/2}\bra{\text{div}_{\Gamma},\Gamma}\rightarrow \mathbf{H}_{\text{loc}}(\bl{curl},\mathbb{R}^3\backslash \Gamma),\\
		\text{div}\,\bl{\Psi}:&\,\mathbf{H}^{-1/2}\bra{\text{div}_{\Gamma},\Gamma}\rightarrow H_{\text{loc}}^1(\mathbb{R}^3\backslash \Gamma),\\
		\bl{curl}\,\bl{\Psi}:&\,\mathbf{H}^{-1/2}(\text{div}_{\Gamma},\Gamma)\rightarrow \mathbf{H}_{\text{loc}}(\bl{curl},\mathbb{R}^3\backslash \Gamma).
		\end{align}
	\end{subequations}
	Since $\text{div}\circ\bl{curl}\equiv 0$, we have in particular
	\begin{subequations}
		\begin{align}
		\bl{curl}\,\bl{\Upsilon}:&\,\mathbf{H}^{-1/2}(\text{div}_{\Gamma},\Gamma)\rightarrow \mathbf{H}_{\text{loc}}(\bl{curl},\mathbb{R}^3\backslash \Gamma)\cap\mathbf{H}_{\text{loc}}(\text{div},\mathbb{R}^3\backslash \Gamma),\\
		\bl{curl}\,\bl{\Psi}:&\,\mathbf{H}^{-1/2}(\text{div}_{\Gamma},\Gamma)\rightarrow \mathbf{H}_{\text{loc}}(\bl{curl},\mathbb{R}^3\backslash \Gamma)\cap\mathbf{H}_{\text{loc}}(\text{div},\mathbb{R}^3\backslash \Gamma).
		\end{align}	
	\end{subequations}

Now, for $\mathbf{z}\neq\bl{0}$, the kernels of the two surface potentials decay as 
\begin{equation*}
\norm{\nabla G\bra{\bl{z}}}\lesssim\, \norm{\bl{z}}^{-2},
\end{equation*}
thus are not only square-integrable locally, but in fact belong to $(L^2(\mathbb{R}^3\backslash\Gamma))^8$. 
\end{proof}

The next lemma shows that the surface potentials solve the homogeneous Dirac equation.

\begin{lemma}\label{lem: Dirac of potential vanish}
	For all $\vec{\bl{b}}\in \mathcal{H}_{\mathsf{T}}$ and $\vec{\bl{a}}\in \mathcal{H}_{\mathsf{R}}$, it holds on $\mathbb{R}^3\backslash\Gamma$ that 
	\begin{subequations}
		\begin{align}
		&\mathsf{D}\mathcal{L}_{\mathsf{R}}(\vec{\bl{b}}) \equiv \vec{\bl{0}},\\ &\mathsf{D}\mathcal{L}_{\mathsf{T}}\bra{\vec{\bl{a}}} \equiv \vec{\bl{0}}.
		\end{align}
	\end{subequations}
\end{lemma}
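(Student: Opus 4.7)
The plan is to give essentially a one-line argument based on \Cref{prop: fundamental sol} and the support properties of the traces, supplemented by a component-wise verification that serves as a useful sanity check and illuminates the structure.

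First I would invoke the fact that $\Gamma$ is compact (being the boundary of a bounded Lipschitz domain), so that $\gamma_{\mathsf{T}}'\vec{\bl{a}}$ and $\gamma_{\mathsf{R}}'\vec{\bl{b}}$ are compactly supported distributions in $(C^\infty(\mathbb{R}^3)^8)'$ whose supports are contained in $\Gamma$. By \Cref{prop: fundamental sol}, applying $\mathsf{D}$ to the Newton potential then yields
\[
\mathsf{D}\mathcal{L}_{\mathsf{T}}(\vec{\bl{a}}) = \mathsf{D}\mathsf{N}(\gamma_{\mathsf{T}}'\vec{\bl{a}}) = \gamma_{\mathsf{T}}'\vec{\bl{a}} \quad \text{and} \quad \mathsf{D}\mathcal{L}_{\mathsf{R}}(\vec{\bl{b}}) = -\mathsf{D}\mathsf{N}(\gamma_{\mathsf{R}}'\vec{\bl{b}}) = -\gamma_{\mathsf{R}}'\vec{\bl{b}}
\]
as distributions in $(C^{\infty}_0(\mathbb{R}^3)^8)'$. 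Since the right-hand sides are supported on $\Gamma$, restricting to any test function in $C^\infty_0(\mathbb{R}^3 \setminus \Gamma)^8$ gives zero. Together with the regularity statement of \Cref{lem: mapping properties potentials}, which ensures that the potentials are smooth away from $\Gamma$, the equations hold pointwise on $\mathbb{R}^3 \setminus \Gamma$.

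As an alternative (or cross-check) proof I would verify this directly from the explicit representations \eqref{eq: wedge potential} and \eqref{eq: vee potential}. For $\mathcal{L}_{\mathsf{T}}(\vec{\bl{a}})$ this amounts to checking the four components of $\mathsf{D}\mathcal{L}_{\mathsf{T}}(\vec{\bl{a}})$: row~1 is $-\text{div}(\nabla\psi(\mathcal{a}_0) + \mathbf{curl}\,\bl{\Upsilon}(a_2)) = -\Delta\psi(\mathcal{a}_0)$, which vanishes because $\Delta G = 0$ away from the singularity and $\text{div}\circ\mathbf{curl} \equiv 0$; row~2 is $-\nabla\,\text{div}\,\bm{\Psi}(\bl{a}_1) + \mathbf{curl}\,\mathbf{curl}\,\bm{\Psi}(\bl{a}_1) = -\bm{\Delta}\bm{\Psi}(\bl{a}_1)$ via the identity $\mathbf{curl}\,\mathbf{curl} = \nabla\,\text{div} - \bm{\Delta}$, which vanishes componentwise; row~3 and row~4 work symmetrically using the same identities. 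The same pattern applies to $\mathcal{L}_{\mathsf{R}}(\vec{\bl{b}})$.

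The only subtle point, which I regard as the main obstacle, is legitimizing the formal manipulations above despite the low regularity of the densities: one must move $\mathsf{D}$ inside the integral kernels on $\mathbb{R}^3 \setminus \Gamma$. This is justified precisely because $\bl{x} \in \mathbb{R}^3 \setminus \Gamma$ keeps us uniformly away from the singularity of $\Phi$, so differentiation under the duality pairing is legal by a standard density argument leveraging \Cref{lem: traces are continuous and surjective} (exactly the argument used in the proof of \Cref{lem: int rep of boundary potential}). This reduces everything to identities that hold pointwise for the smooth kernel $G(\bl{x}-\bl{y})$, and the claim follows. I would present the distributional proof as the main argument and relegate the component-wise verification to a remark, since the former is both shorter and structurally cleaner.
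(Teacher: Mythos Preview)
Your proposal is correct and in fact contains both arguments that appear in the paper, just with the emphasis reversed: the paper's main proof is the explicit component-wise computation from \eqref{eq: wedge potential} and \eqref{eq: vee potential} (your ``cross-check''), while the distributional argument via \Cref{prop: fundamental sol} and the support of $\gamma_{\mathsf{T}}'\vec{\bl{a}}$ on $\Gamma$ (your ``main argument'') is given in the paper only as a subsequent remark. One small quibble: \Cref{lem: mapping properties potentials} gives $\bl{H}(\mathsf{D},\mathbb{R}^3\backslash\Gamma)$-regularity rather than smoothness, so your ``pointwise'' conclusion is better justified either by the $L^2$-identification (distributional zero plus $L^2$ implies a.e.\ zero) or by appealing directly to the smoothness of the kernel away from $\Gamma$ via \Cref{lem: int rep of boundary potential}.
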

\begin{proof}
	The well-known vector and scalar potentials of \eqref{eq: known scalar potentials} are harmonic. Hence, since $\text{div}\circ\bl{curl}\equiv 0$ and $\bl{curl}\circ\nabla\equiv 0$, we directly evaluate 
	\begin{align}
	\begin{split}
	\mathsf{D}\mathcal{L}_{\mathsf{T}}\bra{\vec{\bl{a}}} &=
	\begin{pmatrix}
	-\text{div}\,\nabla\psi\bra{\mathsf{a}_0} - \text{div}\,\bl{curl}\bl{\Upsilon}\bra{a_2}\\
	-\nabla\text{div}\,\bl{\Psi}\bra{\bl{a}_1} +\bl{curl}\,\bl{curl}\bl{\Psi}\bra{\bl{a}_1}\\
	\bl{curl}\,\nabla\psi\bra{\mathsf{a}_0} +\bl{curl}\,\bl{curl}\bl{\Upsilon}\bra{a_2} - \nabla\text{div}\bl{\Upsilon}\bra{a_2}\\
	\text{div}\,\bl{curl}\bl{\Psi}\bra{\bl{a}_1}
	\end{pmatrix}\\
	&=
	\begin{pmatrix}
	-\Delta\psi\bra{\mathsf{a}_0} \\
	-\nabla\text{div}\,\bl{\Psi}\bra{\bl{a}_1}+\bl{curl}\,\bl{curl}\bl{\Psi}\bra{\bl{a}_1}\\
	-\nabla\,\text{div}\,\bl{\Upsilon}\bra{a_2} + \bl{curl}\,\bl{curl}\,\bl{\Upsilon}\bra{a_2}\\
	0
	\end{pmatrix}
	=\vec{\bl{0}}.
	\end{split}
	\end{align}
	A similar calculation holds for $\mathsf{D}\mathcal{L}_{\mathsf{R}}(\vec{\bl{b}})$.
\end{proof}
\begin{remark}
	\Cref{lem: Dirac of potential vanish} was proved using the explicit representations \eqref{eq: wedge potential} and \eqref{eq: vee potential}. The technique revealed some structure behind the two boundary potentials. However, notice that adapting the argument found in the proof of \cite[Thm. 3.1.6]{sauter2010boundary}, the desired result could also be obtained by observing that
	\begin{equation}\gamma_{\mathsf{T}}':\mathcal{H}_{\mathsf{R}}\rightarrow (\bl{H}_{\text{loc}}(\mathsf{D},\mathbb{R}^3\backslash\Gamma))'\subset (C^{\infty}(\mathbb{R}^3\backslash\Gamma)^8)',
	\end{equation}
	together with \Cref{prop: fundamental sol}, guarantees the equality $\mathsf{D}\mathcal{L}_{\mathsf{T}}\vec{\bl{a}}=\gamma_{\mathsf{T}}'\vec{\bl{a}}$ as continuous linear functionals on $C_0^{\infty}(\mathbb{R}^3\backslash\Gamma)$.
\end{remark}
\begin{remark}
	It is a nice and unusual property for the potentials to belong to $(L^2(\Omega^+))^8$ as opposed to being only locally square-integrable. We see from \Cref{lem: int rep of boundary potential} that this is a consequence of two ingredients: the stronger singularity of the Dirac fundamental solution, combined with the absence of differential operators acting on the relevant traces.
\end{remark}

\begin{lemma}[{Jump relations}]\label{lem: jump relations}
	For all $\vec{\bl{a}}\in\mathcal{H}_{\mathsf{R}}$ and $\vec{\bl{b}}\in\mathcal{H}_{\mathsf{T}}$,
		\begin{align}
		\left[\gamma_{\mathsf{T}}\right]\mathcal{L}_{\mathsf{T}}(\vec{\bl{a}})&=\vec{\bm{0}}, & \left[\gamma_{\mathsf{R}}\right]\mathcal{L}_{\mathsf{T}}(\vec{\bl{a}})&=\id,\\
		\left[\gamma_{\mathsf{T}}\right]\mathcal{L}_{\mathsf{R}}(\vec{\bl{b}})&= \id, & \left[\gamma_{\mathsf{R}}\right]\mathcal{L}_{\mathsf{R}}(\vec{\bl{b}})&=\vec{\bm{0}}.
		\end{align}
\end{lemma}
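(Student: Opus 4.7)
The plan is to exploit the explicit componentwise descriptions \eqref{eq: wedge potential} and \eqref{eq: vee potential}, reducing each of the eight jump computations to already-available jump relations for the five constituent potentials $\psi$, $\bl{\Psi}$, $\bl{\Upsilon}$, $\bl{curl}\,\bl{\Psi}$ and $\text{div}\,\bl{\Upsilon}$ from classical second-order potential theory (as collected e.g.\ in \cite{sauter2010boundary,buffa2003galerkin,claeys2017first}). The five facts I would import are $[\gamma]\psi(\varphi)=0$ together with $[\gamma_n]\nabla\psi(\varphi)=\varphi$ for the scalar single layer, $[\gamma_\tau]\bl{curl}\,\bl{\Psi}(\bl{j})=\bl{j}$ with continuous normal trace for the Maxwell double layer (the latter being automatic since $\text{div}\circ\bl{curl}\equiv 0$), $[\gamma]\,\text{div}\,\bl{\Upsilon}(\varphi)=\varphi$ for the Laplace double layer, and continuity across $\Gamma$ of the tangential trace of $\nabla\psi$ as well as of both traces of $\bl{curl}\,\bl{\Upsilon}(\varphi)$ (the latter viewed as a Maxwell-type potential whose source $\varphi\bl{n}$ is purely normal).

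With these in hand each assertion reduces to a componentwise check. For $[\gamma_{\mathsf{T}}]\mathcal{L}_{\mathsf{T}}(\vec{\bl{a}})=\vec{\bm{0}}$, the Stokes-type identity $\text{div}\,\bl{\Psi}(\bl{a}_1)=\psi(\text{div}_\Gamma \bl{a}_1)$ reveals the first entry as a continuous scalar single layer, the second entry $[\gamma_t](\nabla\psi(\mathsf{a}_0)+\bl{curl}\,\bl{\Upsilon}(a_2))$ vanishes termwise, and the third entry is the vanishing normal-trace jump of a Maxwell-type potential. For $[\gamma_{\mathsf{R}}]\mathcal{L}_{\mathsf{T}}(\vec{\bl{a}})=\vec{\bl{a}}$, the $\gamma_n$-component yields $\mathsf{a}_0$ via the classical normal-derivative jump of $\psi$ (with the $\bl{curl}\,\bl{\Upsilon}$ contribution continuous by divergence-freeness), the $\gamma_\tau$-component yields $\bl{a}_1$ via the Maxwell double-layer jump, and the $\gamma$-component yields $a_2$ via the classical double-layer jump. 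The two formulas for $\mathcal{L}_{\mathsf{R}}$ are handled symmetrically, after noticing that $\bl{b}_1\times\bl{n}\in\Hhalfdiv$ so the same classical relations apply verbatim to $\bl{curl}\,\bl{\Psi}(\bl{b}_1\times\bl{n})$ and $\text{div}\,\bl{\Psi}(\bl{b}_1\times\bl{n})$.

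The main obstacle will be consistent sign bookkeeping: this paper adopts the convention $[\cdot]=(\cdot)^{-}-(\cdot)^{+}$, opposite to several of the standard references, and the $\bl{n}\times$ rotations in \eqref{eq: vee potential} together with the identities $\gamma_t=\bl{n}\times\gamma_\tau$ and $\bl{n}\times(\bl{n}\times\bl{v})=(\bl{n}\cdot\bl{v})\bl{n}-\bl{v}$ must be tracked precisely so that the non-vanishing jumps sum to the identity in exactly the correct trace spaces. A more abstract alternative, which bypasses most of this bookkeeping, uses \Cref{lem: symmetry Newton} to obtain the identity $\int_{\mathbb{R}^3}\mathcal{L}_{\mathsf{T}}(\vec{\bl{a}})\cdot\mathsf{D}\vec{\bl{V}}\,\dif\bl{x}=\llangle\vec{\bl{a}},\gamma_{\mathsf{T}}\vec{\bl{V}}\rrangle_\Gamma$ for $\vec{\bl{V}}\in C^\infty_0(\mathbb{R}^3)^8$, compares this with the piecewise application of \eqref{eq: IBP Dirac} on $\Omega^\pm$ to $\mathcal{L}_{\mathsf{T}}(\vec{\bl{a}})$ (which satisfies $\mathsf{D}\mathcal{L}_{\mathsf{T}}(\vec{\bl{a}})=0$ by \Cref{lem: Dirac of potential vanish}), and then invokes the surjectivity supplied by \Cref{lem: traces are continuous and surjective} to vary $\gamma_{\mathsf{T}}\vec{\bl{V}}$ and $\gamma_{\mathsf{R}}\vec{\bl{V}}$ independently, reading off both jumps simultaneously.
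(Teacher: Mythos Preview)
Your primary approach---componentwise reduction via \eqref{eq: wedge potential} and \eqref{eq: vee potential} to the classical jump relations for $\psi$, $\bl{\Psi}$, $\bl{\Upsilon}$, $\bl{curl}\,\bl{\Psi}$ and $\text{div}\,\bl{\Upsilon}$---is exactly what the paper does, citing \cite[Sec.~5]{claeys2017first} and \cite[Sec.~4]{hiptmair2003coupling} for the individual ingredients. The one term the paper singles out for an explicit argument is $[\gamma_n]\bl{curl}\,\bl{\Upsilon}(q)$, which it handles via the identity $[\gamma_n]\bl{curl}\,\bl{\Upsilon}(q)=\text{div}_\Gamma\bigl([\gamma_\tau]\bl{\Upsilon}(q)\bigr)=0$ (valid because $\bl{\Upsilon}:H^{1/2}(\Gamma)\to\mathbf{H}_{\text{loc}}(\bl{curl}^2)$ gives enough regularity for $\gamma_n\circ\bl{curl}=\text{div}_\Gamma\circ\gamma_\tau$); this is sharper than your ``by divergence-freeness'' remark, which as stated would need the field to lie in $\mathbf{H}(\text{div},\mathbb{R}^3)$ globally rather than merely piecewise.
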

\begin{proof}
	For the most part, the following jump relations can be inferred from known theory. We carefully evaluate
	\begin{subequations}
		\begin{align}
		\left[\gamma_{\mathsf{T}}\right]\mathcal{L}_{\mathsf{T}}(\vec{\bl{a}}) &=
		\begin{pmatrix}
		-\left[\gamma\right]\text{div}\bl{\Psi}\bra{\bl{a}_1}\\
		\left[\gamma_t\right]\nabla\psi\bra{\mathcal{a}_0}+\left[\gamma_t\right]\bl{curl}\bl{\Upsilon}\bra{a_2}\\
		\left[\gamma_n\right]\bl{curl}\bl{\Psi}\bra{\bl{a}_1}
		\end{pmatrix}
		=\begin{pmatrix}
		0\\
		\bl{0}\\
		0
		\end{pmatrix},\label{eq: jump wedge wedge}\\
		\label{eq: jump vee wedge}
		\left[\gamma_{\mathsf{R}}\right]\mathcal{L}_{\mathsf{T}}(\vec{\bl{a}}) &=
		\begin{pmatrix}
		\left[\gamma_n\right]\nabla\psi\bra{\mathsf{a}_0}+\left[\gamma_n\right]\bl{curl}\bl{\Upsilon}\bra{a_2}\\
		\left[\gamma_{\tau}\right]\bl{curl}\bl{\Psi}\bra{\bl{a}_1}\\
		\left[\gamma\right]\text{div}\bl{\Upsilon}\bra{a_2}
		\end{pmatrix}
		=\begin{pmatrix}
		\mathcal{a}_0\\
		\bl{a}_1\\
		a_2
		\end{pmatrix},\\
		\left[\gamma_{\mathsf{T}}\right]\mathcal{L}_{\mathsf{R}}(\vec{\bl{b}})&=
		\begin{pmatrix}
		\left[\gamma\right]\text{div}\bl{\Upsilon}\bra{b_0}\\
		\left[\gamma_{t}\right]\bl{curl}\bl{\Psi}\bra{\bl{b}_1\times\bl{n}}\\
		-\left[\gamma_{n}\right]\bl{curl}\bl{\Upsilon}\bra{b_0}+\left[\gamma_{n}\right]\nabla\psi\bra{\mathcal{b}_2}
		\end{pmatrix}
		=\begin{pmatrix}
		b_0\\
		\bl{b}_1\\
		\mathcal{b}_2
		\end{pmatrix},\label{eq: jump wedge vee}\\
		\left[\gamma_{\mathsf{R}}\right]\mathcal{L}_{\mathsf{R}}(\vec{\bl{b}}) &=
		\begin{pmatrix}
		\left[\gamma_n\right]\bl{curl}\bl{\Psi}\bra{\bl{b}_1\times\bl{n}}\\
		-\left[\gamma_{\tau}\right]\bl{curl}\bl{\Upsilon}\bra{b_0}+\left[\gamma_{\tau}\right]\nabla\psi\bra{\mathcal{b}_2}\\
		\left[\gamma\right]\text{div}\Psi\bra{\bl{b}_1\times\bl{n}}
		\end{pmatrix}
		=\begin{pmatrix}
		0\\
		\bl{0}\\
		0
		\end{pmatrix}.\label{eq: jump vee vee}
		\end{align}
	\end{subequations}
	
	The individual terms appearing in the above calculations can be found in \cite[Sec. 5]{claeys2017first} and \cite[Sec. 4]{hiptmair2003coupling}, possibly up to tangential rotation by $90^{\circ}$. Some terms slightly differ. In both \cref{eq: jump vee wedge} and \cref{eq: jump wedge vee}, we are particularly concerned with the normal jump of $\mathbf{curl}\,\bl{\Upsilon}$ across $\Gamma$. Fortunately, we know that the restriction of $\bl{\Upsilon}$ to $H^{1/2}\left(\Gamma\right)$ is a continuous map with codomain $\mathbf{H}_{\text{loc}}(\mathbf{curl}^2,\Omega)$. Its image is therefore regular enough for the identity
	\begin{equation*}
	\left[\gamma_n\right]\mathbf{curl}\,\bl{\Upsilon}\left(q\right)=\text{div}_{\Gamma}\left(\left[\gamma_{\tau}\right]\bl{\Upsilon}\left(q\right)\right) = 0
	\end{equation*}
	to hold for all $q\in H^{1/2}(\Gamma)$ \cite[Eq. 8]{buffa2003galerkin}.
\end{proof}

\begin{remark}
	The formal structure of these jump relations is the same as that of the jump identities for the potentials associated with other operators such as
	\begin{itemize}
		\item[$\diamond$] scalar second-order strongly elliptic operators \cite{mclean2000strongly,sauter2010boundary},
		\item[$\diamond$] second-order Maxwell wave operators \cite{buffa2002maxwell,buffa2003galerkin},
		\item[$\diamond$] Hodge--Laplace and Hodge--Helmholtz operators \cite{claeys2017first,claeys2018first}.
	\end{itemize}
\end{remark}

\subsection{Representation by surface potentials}\label{sec: Representation by surface potentials}
Following McLean in \cite[Chap. 7]{mclean2000strongly}, we mimic the approach introduced by Costabel and Dauge \cite{costabel1997representation,costabel1988boundary}. \Cref{cor. second green identity} plays the role of Green's second identity. We begin with the case where a solution of the Dirac equation defines a compactly supported distribution. This covers for instance interior problems and yields a representation formula in $\Omega^-$. However, a condition on the behavior of solutions at infinity will be needed for $\Omega^+$. 

\begin{proposition}[Interior representation formula]\label{prop: rep formula for compactly supported}
	If $\vec{\bl{U}}\in\bl{H}(\mathsf{D},\mathbb{R}^3\backslash\Gamma)$ is compactly supported and $\vec{\bl{F}}\in(L^2(\mathbb{R}^3))^8$ is such that $\vec{\bl{F}}\big\vert_{\Omega}:=\bra{\mathsf{D}\bl{U}}\big\vert_{\Omega}$ and $\vec{\bl{F}}\big\vert_{\Omega^+}:=\bra{\mathsf{D}\bl{U}}\big\vert_{\Omega^+}$. Then
	\begin{equation}\label{eq: representation formula compact}
	\vec{\bl{U}}(\bl{x}) = \Phi * \vec{\bl{F}}(\bl{x}) + \mathcal{L}_{\mathsf{T}}\left[\gamma_{\mathsf{R}}\vec{\bl{U}}\right](\bl{x}) +\mathcal{L}_{\mathsf{R}}\left[\gamma_{\mathsf{T}}\vec{\bl{U}}\right](\bl{x}),\qquad\qquad \bl{x}\in\mathbb{R}^3\backslash\Gamma.
	\end{equation}
\end{proposition}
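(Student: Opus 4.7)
The plan is to adapt the classical Costabel approach, following \cite[Chap. 7]{mclean2000strongly}: I identify $\mathsf{D}\vec{\bl{U}}$ as a compactly supported distribution on $\mathbb{R}^3$ whose bulk part equals $\vec{\bl{F}}$, plus surface contributions on $\Gamma$ that encode the jumps of $\vec{\bl{U}}$; then I invert $\mathsf{D}$ by convolving with the fundamental solution. All the needed tools are in place: \Cref{cor. second green identity} is the Dirac analogue of Green's second identity, the surface potentials are defined as $\mathsf{N}$ precomposed with the adjoint traces $\gamma'_\mathsf{T}, \gamma'_\mathsf{R}$, and \Cref{prop: fundamental sol} gives the inversion $\mathsf{N}\mathsf{D}=\id$ on compactly supported distributions.

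The core computation goes as follows. I test against an arbitrary $\vec{\bl{V}}\in C^\infty_0(\mathbb{R}^3)^8$ and split
\begin{equation*}
    \langle\vec{\bl{U}},\mathsf{D}\vec{\bl{V}}\rangle_{\mathbb{R}^3}=\int_{\Omega^-}\vec{\bl{U}}\cdot\mathsf{D}\vec{\bl{V}}+\int_{\Omega^+}\vec{\bl{U}}\cdot\mathsf{D}\vec{\bl{V}},
\end{equation*}
then apply \Cref{cor. second green identity} on each piece. Smoothness of $\vec{\bl{V}}$ across $\Gamma$ collapses $\gamma^-_\bullet\vec{\bl{V}}=\gamma^+_\bullet\vec{\bl{V}}$, while the orientation flip in the outward normal for $\Omega^+$ produces opposite signs on the boundary pairings. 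Summing both identities and using $\mathsf{D}\vec{\bl{U}}|_{\Omega^\pm}=\vec{\bl{F}}|_{\Omega^\pm}$, the one-sided traces of $\vec{\bl{U}}$ recombine into the jumps $[\gamma_\mathsf{T}]\vec{\bl{U}}$ and $[\gamma_\mathsf{R}]\vec{\bl{U}}$. This yields, as compactly supported distributions on $\mathbb{R}^3$,
\begin{equation*}
    \mathsf{D}\vec{\bl{U}}=\vec{\bl{F}}-\gamma'_\mathsf{R}[\gamma_\mathsf{T}]\vec{\bl{U}}+\gamma'_\mathsf{T}[\gamma_\mathsf{R}]\vec{\bl{U}}.
\end{equation*}

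Applying $\mathsf{N}$ to both sides, using $\mathsf{N}\mathsf{D}\vec{\bl{U}}=\vec{\bl{U}}$ from \Cref{prop: fundamental sol}, and invoking $\mathsf{N}\vec{\bl{F}}=\Phi*\vec{\bl{F}}$ together with the definitions $\mathcal{L}_\mathsf{T}(\cdot)=\mathsf{N}\gamma'_\mathsf{T}(\cdot)$ and $\mathcal{L}_\mathsf{R}(\cdot)=-\mathsf{N}\gamma'_\mathsf{R}(\cdot)$ deliver
\begin{equation*}
    \vec{\bl{U}}=\Phi*\vec{\bl{F}}+\mathcal{L}_\mathsf{R}[\gamma_\mathsf{T}]\vec{\bl{U}}+\mathcal{L}_\mathsf{T}[\gamma_\mathsf{R}]\vec{\bl{U}},
\end{equation*}
which is the claimed formula once the square brackets in \eqref{eq: representation formula compact} are read as jump traces across $\Gamma$.

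The main obstacle I anticipate is careful bookkeeping rather than any deep new idea: keeping the two orientation-dependent signs in the $\Omega^+$ version of \Cref{cor. second green identity} straight, and ensuring that $\gamma'_\mathsf{T}[\gamma_\mathsf{R}]\vec{\bl{U}}$ and $\gamma'_\mathsf{R}[\gamma_\mathsf{T}]\vec{\bl{U}}$ qualify as compactly supported distributions to which \Cref{prop: fundamental sol} applies. The continuity and surjectivity statements of \Cref{lem: traces are continuous and surjective} place the jumps in the correct trace spaces $\HT$ and $\HR$, and compactness of support follows from compactness of $\Gamma$; no new analytic ingredient beyond those already assembled in the excerpt is required.
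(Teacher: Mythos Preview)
Your proof is correct and follows essentially the same route as the paper: both test $\mathsf{D}\vec{\bl{U}}$ against $\vec{\bl{V}}\in C_0^\infty(\mathbb{R}^3)^8$, split into $\Omega^-$ and $\Omega^+$, apply \Cref{cor. second green identity} on each piece to obtain the distributional identity $\mathsf{D}\vec{\bl{U}}=\vec{\bl{F}}+\gamma'_\mathsf{T}[\gamma_\mathsf{R}\vec{\bl{U}}]-\gamma'_\mathsf{R}[\gamma_\mathsf{T}\vec{\bl{U}}]$, and then convolve with $\Phi$ via \Cref{prop: fundamental sol}. The only addition in the paper is an explicit final remark that \Cref{lem: mapping properties potentials} upgrades the distributional equality to one in $(L^2(\mathbb{R}^3))^8$, which you could mention for completeness.
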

\begin{proof}
	According to \cref{eq: IBP Dirac}, 
	\begin{align}
	\begin{split}
	\langle\mathsf{D}\vec{\bl{U}},\vec{\bl{V}}\rangle_{\mathbb{R}^3}
	&\starequal \int_{\Omega}\vec{\bl{U}}\cdot\mathsf{D}\vec{\bl{V}}\dif\bl{x} + \int_{\mathbb{R}^3\backslash\overline{\Omega}}\vec{\bl{U}}\cdot\mathsf{D}\vec{\bl{V}}\dif\bl{x}\\
	&=\int_{\Omega}\vec{\bl{F}}\cdot\vec{\bl{V}}\dif\bl{x} + \llangle\gamma^-_{\mathsf{R}}\vec{\bl{U}},\gamma^-_{\mathsf{T}}\vec{\bl{V}}\rrangle_{\Gamma} -\llangle\gamma^-_{\mathsf{T}}\vec{\bl{U}},\gamma^-_{\mathsf{R}}\vec{\bl{V}}\rrangle_{\Gamma} \\
	&\qquad + \int_{\mathbb{R}^3\backslash\overline{\Omega}}\vec{\bl{F}}\cdot\vec{\bl{V}}\dif\bl{x} - \llangle\gamma^+_{\mathsf{R}}\vec{\bl{U}},\gamma^+_{\mathsf{T}}\vec{\bl{V}}\rrangle_{\Gamma}  +\llangle\gamma^+_{\mathsf{T}}\vec{\bl{U}},\gamma^+_{\mathsf{R}}\vec{\bl{V}}\rrangle_{\Gamma} \\
	&=\int_{\mathbb{R}^3}\vec{\bl{F}}\cdot\vec{\bl{V}}\dif\bl{x} + 
	\llangle\left[\gamma_{\mathsf{R}}\vec{\bl{U}}\right],\gamma_{\mathsf{T}}\vec{\bl{V}}\rrangle_{\Gamma}  -\llangle\left[\gamma_{\mathsf{T}}\vec{\bl{U}}\right],\gamma_{\mathsf{R}}\vec{\bl{V}}\rrangle_{\Gamma} 
	\end{split}
	\end{align}
	for all $\vec{\bl{V}}\in (C_0^{\infty}(\mathbb{R}^3))^8$. The regularity assumptions on $\vec{\bl{U}}$ guarantee that the traces are well-defined.  We have used the fact that $\vec{\bl{V}}$ is smooth across the boundary to obtain the last equality, because smoothness guarantees that $\gamma^-_{\mathsf{T}}\vec{\bl{V}}=\gamma^+_{\mathsf{T}}\vec{\bl{V}}$ and $\gamma^-_{\mathsf{R}}\vec{\bl{V}}=\gamma^+_{\mathsf{R}}\vec{\bl{V}}$.
	Therefore, in the sense of distributions, we have
	\begin{equation}
	\mathsf{D}\vec{\bl{U}} = \bl{F} + \bra{\gamma^-_{\mathsf{T}}}'\left[\gamma^-_{\mathsf{R}}\vec{\bl{U}}\right] - \bra{\gamma^-_{\mathsf{R}}}'\left[\gamma^-_{\mathsf{T}}\vec{\bl{U}}\right].
	\end{equation}
	
	Since $\vec{\bl{U}}$ is assumed to have compact support, it can interpreted as a continuous linear functional on $C^{\infty}(\mathbb{R}^3)^8$ and convolution with $\Phi$ using \Cref{prop: fundamental sol} shows that the identity is valid when interpreted in the sense distributions. \Cref{lem: mapping properties potentials} confirms that the equality holds in $(L^2(\mathbb{R}^3))^8$.
\end{proof}

In the following, we will work over the domains defined as the interior $B_{\rho}$ and exterior $B_{\rho}^+$ of an open ball of radius $\rho$. Therefore, we must introduce the traces $\gamma^\rho_\mathsf{T}$ and $\gamma^\rho_\mathsf{R}$ that extend the operators defined in \eqref{ed: traces smooth} where $\Gamma$ is replaced by the boundary $\partial B_{\rho}$ of the open ball. The surface potentials $\mathcal{L}^{\rho}_{\mathsf{R}}$ and $\mathcal{L}^{\rho}_\mathsf{T}$ are defined accordingly with respect to these trace mappings. Similarly, a dagger $\dag$ will refer to any given Lipschitz domain $\Omega_\dag\subset\mathbb{R}^3$. The following development parallels that of \cite[Sec. 7]{mclean2000strongly}.

\begin{lemma}\label{lem: harmonic contribution to rep formula}
	For $\vec{\bl{U}}\in (C^{\infty}_0(\Omega^+)^8)'$ such that $\mathsf{D}\vec{\bl{U}}$ has compact support  in $\Omega^+$, there exists a \underline{unique} vector field $\mathsf{M}\vec{\bl{U}}\in(C^{\infty}(\mathbb{R}^3))^8$ such that 
	\begin{equation}\label{eq: MU def}
	\mathsf{M}\vec{\bl{U}}\bra{\bl{x}} = \mathcal{L}^{\dag}_{\mathsf{T}}\bra{\gamma_{\mathsf{R}}^{\dag}\vec{\bl{U}}}\bra{\bl{x}} +\mathcal{L}_{\mathsf{R}}^{\dag}\bra{\gamma_{\mathsf{T}}^{\dag}\vec{\bl{U}}}\bra{\bl{x}}
	\end{equation}
	for all $\bl{x}$ inside \underline{any} bounded Lipschitz domain $\Omega_{\dag}$ such that
	\begin{equation}
	\overline{\Omega}\cup\text{\emph{supp}}(\mathsf{D}\vec{\bl{U}})\Subset \Omega_{\dag}. 
	\end{equation}
\end{lemma}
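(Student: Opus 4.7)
The plan is to mimic the classical trick from \cite[Chap. 7]{mclean2000strongly}: build $\mathsf{M}\vec{\bl{U}}$ by the formula \eqref{eq: MU def} for any admissible $\Omega_{\dag}$, prove the right-hand side is smooth on the interior of $\Omega_{\dag}$, and then show that the value at a fixed $\bl{x}$ does not depend on the particular choice of $\Omega_{\dag}$ by applying Green's formula on the annular region between two candidates.

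First I would verify pointwise smoothness. For fixed admissible $\Omega_{\dag}$ and any $\bl{x}$ in its interior, the kernel $\Phi(\bl{x}-\bl{y})$ is $C^{\infty}$ in $\bl{x}$ uniformly in $\bl{y}\in\partial\Omega_{\dag}$, so differentiating under the duality pairings in the integral representations of $\mathcal{L}^{\dag}_{\mathsf{T}}$ and $\mathcal{L}^{\dag}_{\mathsf{R}}$ (supplied by \Cref{lem: int rep of boundary potential}) shows that the right-hand side of \eqref{eq: MU def} defines an element of $C^{\infty}(\Omega_{\dag})^8$. Since every $\bl{x}\in\mathbb{R}^3$ sits in the interior of \emph{some} admissible $\Omega_{\dag}$, global smoothness follows from independence.

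For independence, it suffices by taking a common super-domain to treat two admissible domains $\Omega_{\dag,1}\Subset\Omega_{\dag,2}$. Fix $\bl{x}\in\Omega_{\dag,1}$ and consider the Lipschitz annular region $A:=\Omega_{\dag,2}\setminus\overline{\Omega_{\dag,1}}$. By hypothesis $\mathsf{D}\vec{\bl{U}}=0$ on $A$, and since $\mathsf{D}^2$ coincides with the Hodge--Laplacian, interior elliptic regularity implies that $\vec{\bl{U}}$ is classical and smooth on a neighborhood of $\overline{A}$, so the relevant traces on $\partial A$ exist unambiguously. Apply Green's formula \eqref{eq: IBP Dirac} on $A$ with $\vec{\bl{U}}$ and the test field $\vec{\bl{V}}_j(\bl{y}):=\Phi(\bl{x}-\bl{y})\vec{\bl{E}}_j$, $j=1,\dots,8$. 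Since $\bl{x}\notin\overline{A}$, \Cref{prop: fundamental sol} gives $\mathsf{D}_{\bl{y}}\vec{\bl{V}}_j=\vec{\bl{0}}$ on $A$, and both volume integrals vanish. We are left with
\begin{equation*}
\llangle\gamma^A_{\mathsf{T}}\vec{\bl{U}},\gamma^A_{\mathsf{R}}\vec{\bl{V}}_j\rrangle_{\partial A}
=
\llangle\gamma^A_{\mathsf{T}}\vec{\bl{V}}_j,\gamma^A_{\mathsf{R}}\vec{\bl{U}}\rrangle_{\partial A},
\end{equation*}
where the traces are taken with respect to the outward unit normal of $A$. Decomposing $\partial A=\partial\Omega_{\dag,1}\cup\partial\Omega_{\dag,2}$ and noting that the $A$-outward normal equals the $\Omega_{\dag,2}$-outward normal on $\partial\Omega_{\dag,2}$ but is the reverse of the $\Omega_{\dag,1}$-outward normal on $\partial\Omega_{\dag,1}$, each pairing can be rewritten in terms of the trace operators $\gamma^{\dag,k}_{\mathsf{T}}$, $\gamma^{\dag,k}_{\mathsf{R}}$ defined via \eqref{eq: little wedge and vee mappings}. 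Invoking \Cref{lem: int rep of boundary potential} (and its $\mathcal{L}_{\mathsf{R}}$-counterpart) to recognize that the contribution of $\partial\Omega_{\dag,k}$ to the pairing equals, up to sign, $\mathcal{L}^{\dag,k}_{\mathsf{T}}(\gamma^{\dag,k}_{\mathsf{R}}\vec{\bl{U}})(\bl{x})\cdot\vec{\bl{E}}_j$ respectively $\mathcal{L}^{\dag,k}_{\mathsf{R}}(\gamma^{\dag,k}_{\mathsf{T}}\vec{\bl{U}})(\bl{x})\cdot\vec{\bl{E}}_j$, the identity above rearranges into
\begin{equation*}
\mathcal{L}^{\dag,2}_{\mathsf{T}}(\gamma^{\dag,2}_{\mathsf{R}}\vec{\bl{U}})(\bl{x})
+\mathcal{L}^{\dag,2}_{\mathsf{R}}(\gamma^{\dag,2}_{\mathsf{T}}\vec{\bl{U}})(\bl{x})
=
\mathcal{L}^{\dag,1}_{\mathsf{T}}(\gamma^{\dag,1}_{\mathsf{R}}\vec{\bl{U}})(\bl{x})
+\mathcal{L}^{\dag,1}_{\mathsf{R}}(\gamma^{\dag,1}_{\mathsf{T}}\vec{\bl{U}})(\bl{x}),
\end{equation*}
which is the desired independence. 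Uniqueness of $\mathsf{M}\vec{\bl{U}}$ is then immediate, and existence follows by pointwise definition through any admissible $\Omega_{\dag}$ whose interior contains $\bl{x}$.

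The main obstacle is the sign bookkeeping on the inner boundary $\partial\Omega_{\dag,1}$: because the components of $\gamma_{\mathsf{T}}$ and $\gamma_{\mathsf{R}}$ in \eqref{eq: little wedge and vee mappings} depend on the choice of outward normal, the $A$-traces must be systematically reconciled with the $\Omega_{\dag,1}$-traces used to define $\mathcal{L}^{\dag,1}_{\bullet}$, and the resulting sign reversals must be matched precisely with the anti-symmetry relations of \Cref{lem: anti-symmetric properties of fundamental sol} so that the two ``rotated'' pairings in Green's formula combine into the symmetric sum appearing in \eqref{eq: MU def}. Everything else amounts to plugging representations together.
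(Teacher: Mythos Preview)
Your proposal is correct and follows essentially the same route as the paper: apply Green's formula \eqref{eq: IBP Dirac} on the annular region between two admissible domains with the columns of $\Phi_{\bl{x}}$ as test fields, use that both $\mathsf{D}\vec{\bl{U}}$ and $\mathsf{D}_{\bl{y}}\Phi_{\bl{x}}\vec{\bl{E}}_j$ vanish there (the latter since $\mathsf{D}^2\mathsf{G}=-\Delta G\,\mathsf{I}_8=0$ away from the diagonal), and identify the resulting boundary pairings with the potentials via \Cref{lem: int rep of boundary potential} and the anti-symmetry of \Cref{lem: anti-symmetric properties of fundamental sol}. The only organizational difference is that the paper first carries this out for concentric balls $B_{\rho_1}\subset B_{\rho_2}$ and then remarks that the same argument works for arbitrary admissible $\Omega_{\dag}$, whereas you handle general nested Lipschitz domains directly after a common super-domain reduction; this is cosmetic.
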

\begin{remark}
	It is key in the statement of \Cref{lem: harmonic contribution to rep formula} that the vector field $\mathsf{M}\vec{\bl{U}}$ is independent of $\Omega_{\dag}$.
\end{remark}
\begin{proof}
	Under the above hypotheses, $\vec{\bl{U}}$ is harmonic in $\Omega^{+}\backslash\text{supp}(\mathsf{D}\vec{\bl{U}})$, because $\mathsf{D}\vec{\bl{U}}=\vec{\bl{0}}$ implies that $\vec{\bl{\Delta}}\vec{\bl{U}}=\mathsf{D}^2\vec{\bl{U}}=\vec{\bl{0}}$.  Standard elliptic regularity theory \cite[Thm. 6.4]{mclean2000strongly} further tells us that $\vec{\bl{U}}$ is a regular distribution whose components are smooth in that domain. Therefore, we can \emph{define} $\mathsf{M}\vec{\bl{U}}$ in $B_{\rho_1}$ as in the right hand side of \eqref{eq: MU def} by
	\begin{equation}
	\mathsf{M}\vec{\bl{U}}\bra{\bl{x}} := \mathcal{L}^{\rho_1}_{\mathsf{T}}\bra{\gamma_{\mathsf{R}}^{\rho_1}\vec{\bl{U}}}\bra{\bl{x}} +\mathcal{L}_{\mathsf{R}}^{\rho_1}\bra{\gamma_{\mathsf{T}}^{\rho_1}\vec{\bl{U}}}\bra{\bl{x}},
	\end{equation}
	where the radius $\rho_1$ is large enough that $	\overline{\Omega}\cup\text{supp}(\mathsf{D}\vec{\bl{U}})\Subset B_{\rho_1}$.
	
	Applying \cref{eq: IBP Dirac} inside $B_{\rho_2}\backslash\overline B_{\rho_1}$ with $\rho_1<\rho_2$ eventually shows that this definition is independent of the radius. Indeed, for any $\bl{x}\in B_{\rho_1}$, $\Phi_{\bl{x}}$ is a smooth matrix in $\mathbb{R}^3\backslash \overline{B_{\rho_1}}$, and, thus, $\text{supp}(\mathsf{D}\vec{\bl{U}})\Subset B_{\rho_1}$ guarantees for $i=1,...,8$ that
	\begin{multline}
	0 = \int_{B_{\rho_2}\backslash\overline B_{\rho_1}}\Phi\bra{\bl{x}-\bl{y}}\mathsf{D}\vec{\bl{U}}\bra{\bl{y}}\dif\bl{y}\cdot\vec{\bl{E}}_i = \int_{B_{\rho_2}\backslash\overline B_{\rho_1}}\Phi_{i,:}\bra{\bl{x}-\bl{y}}\mathsf{D}\vec{\bl{U}}\bra{\bl{y}}\dif\bl{y}\\\starequal
	-\int_{B_{\rho_2}\backslash\overline B_{\rho_1}}\Phi_{:,i}\bra{\bl{x}-\bl{y}}\cdot\mathsf{D}\vec{\bl{U}}\bra{\bl{y}}\dif\bl{y}
	= -\int_{B_{\rho_2}\backslash\overline B_{\rho_1}}\mathsf{D}_{\bl{y}}\Phi_{:,i}\bra{\bl{x}-\bl{y}}\cdot\vec{\bl{U}}\bra{\bl{y}}\dif\bl{y}\\
	- \llangle\gamma^{\rho_2}_{\mathsf{T}}\vec{\bl{U}},\gamma^{\rho_2}_{\mathsf{R}}\Phi_{:,i}\bra{\bl{x}-\cdot}\rrangle_{\Gamma}  -
	\llangle\gamma^{\rho_2}_{\mathsf{T}}\Phi_{:,i}\bra{\bl{x}-\cdot},\gamma^{\rho_2}_{\mathsf{R}}\vec{\bl{U}}\rrangle_{\Gamma}  \\
	+ \llangle\gamma^{\rho_1}_{\mathsf{T}}\vec{\bl{U}},\gamma^{\rho_1}_{\mathsf{R}}\Phi_{:,i}\bra{\bl{x}-\cdot}\rrangle_{\Gamma}  +
	\llangle\gamma^{\rho_1}_{\mathsf{T}}\Phi_{:,i}\bra{\bl{x}-\cdot},\gamma^{\rho_1}_{\mathsf{R}}\vec{\bl{U}}\rrangle_{\Gamma},
	\end{multline}
	where $\Phi_{i,:}$ corresponds to the $i$-th row of $\Phi$, $\Phi_{:,j}$ to its $j$-th column, and \Cref{lem: anti-symmetric properties of fundamental sol} was used to obtain $(*)$.
	
	On the one hand, for $\bl{x}\neq\bl{y}$,
	\begin{multline}
	\mathsf{D}_{\bl{y}}\Phi_{:,i}\bra{\bl{x}-\bl{y}}\cdot\vec{\bl{U}}\bra{\bl{y}}
	=
	\mathsf{D}_{\bl{x}}\bra{\Phi_{\bl{x}}\bra{\bl{y}}\vec{\bl{U}}\bra{\bl{y}}}\cdot\vec{\bl{E}}_i\\= \mathsf{D}_{\bl{x}}\mathsf{D}_{\bl{x}}\bra{\mathsf{G}\bra{\bl{x}-\bl{y}}\vec{\bl{U}}\bra{\bl{y}}}\cdot\vec{\bl{E}}_i
	=\bra{-\Delta_{\bl{x}} G\bra{\bl{x}-\bl{y}}}\vec{\bl{U}}\bra{\bl{y}}\cdot\vec{\bl{E}}_i = 0.
	\end{multline}
	
  On the other hand,
	\begin{equation}
	\llangle\gamma^{\rho_2}_{\mathsf{T}}\vec{\bl{U}},\gamma^{\rho_2}_{\mathsf{R}}\Phi_{:,i}\bra{\bl{x}-\cdot}\rrangle = -\llangle\gamma^{\rho_2}_{\mathsf{T}}\vec{\bl{U}},\gamma^{\rho_2}_{\mathsf{R}}\bra{\Phi_{\bl{x}}\vec{\bl{E}}_i}\rrangle = - \mathcal{L}^{\rho_2}_{\mathsf{R}}\bra{\gamma^{\rho_2}_{\mathsf{T}}\vec{\bl{U}}}\bra{\bl{x}}\cdot \vec{\bl{E}}_j
	\end{equation}
	by \Cref{lem: int rep of boundary potential}, and similarly for the remaining boundary terms. These two pieces of information together prove the validity of the independence claim.

	In fact, the same argument can be repeated in $B_{\rho_1}\backslash\overline{\Omega_{\dag}}$ to confirm that \eqref{eq: MU def} holds independently of the chosen Lipschitz domain satisfying the hypotheses.
	
	Smoothness of $\mathsf{M}\vec{\bl{U}}$ is inherited from the smoothness of the integrands.
\end{proof}

\begin{lemma}\label{lem: rep formula exterior domain}
	Let $\vec{\bl{F}}\in(L^2(\Omega^+))^8$ be compactly supported and suppose that $\vec{\bl{U}}\in (C^{\infty}_0(\Omega^+)^8)'$  satisfies $\mathsf{D}\vec{\bl{U}}=\vec{\bl{F}}$ on $\Omega^+$. If the restriction of $\vec{\bl{U}}$ to $\Omega^+\cap B_{\rho}$ belongs to $\bl{H}(\mathsf{D},\Omega^+\cap B_{\rho})$ for some $\rho$ large enough that $\Omega\cup\Gamma\Subset B_{\rho}$ and $\text{\emph{supp}}\,\vec{\bl{F}}\Subset\Omega^+\cap B_{\rho}$, then
	\begin{equation}\label{eq: rep formula exterior domain with harmonic term}
	\vec{\bl{U}} = \Phi * \vec{\bl{F}} - \mathcal{L}_{\mathsf{T}}\gamma^+_{\mathsf{R}}\vec{\bl{U}} -\mathcal{L}_{\mathsf{R}}\gamma^+_{\mathsf{T}}\vec{\bl{U}}
	+ \mathsf{M}\vec{\bl{U}}
	\end{equation}
	holds in $\mathbf{H}(\mathsf{D},\Omega^+)$.
\end{lemma}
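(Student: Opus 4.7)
The strategy is to reduce the exterior case to the interior representation formula of \Cref{prop: rep formula for compactly supported} via a truncation argument, at the cost of introducing spurious boundary terms on $\partial B_\rho$ that will ultimately be absorbed into the harmonic correction $\mathsf{M}\vec{\bl{U}}$. First, I would fix $\rho$ large enough that $\overline{\Omega}\cup\text{supp}\,\vec{\bl{F}}\Subset B_\rho$, and define $\tilde{\vec{\bl{U}}}\in\bra{L^2(\mathbb{R}^3)}^8$ as the extension by zero of $\vec{\bl{U}}\vert_{\Omega^+\cap B_\rho}$. This vector field has compact support and belongs to $\bl{H}(\mathsf{D},\mathbb{R}^3\setminus(\Gamma\cup\partial B_\rho))$, so the argument of \Cref{prop: rep formula for compactly supported} applies to it nearly verbatim, with Green's formula \eqref{eq: IBP Dirac} now invoked simultaneously on the three regions $\Omega^-$, $\Omega^+\cap B_\rho$, and $\mathbb{R}^3\setminus\overline{B_\rho}$.

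Since $\tilde{\vec{\bl{U}}}$ vanishes on $\Omega^-$ and outside of $B_\rho$, the jumps collapse to $[\gamma_\mathsf{T}]\tilde{\vec{\bl{U}}}=-\gamma_\mathsf{T}^+\vec{\bl{U}}$ and $[\gamma_\mathsf{R}]\tilde{\vec{\bl{U}}}=-\gamma_\mathsf{R}^+\vec{\bl{U}}$ across $\Gamma$, while across $\partial B_\rho$ one finds $[\gamma_\mathsf{T}^\rho]\tilde{\vec{\bl{U}}}=\gamma_\mathsf{T}^\rho\vec{\bl{U}}$ and $[\gamma_\mathsf{R}^\rho]\tilde{\vec{\bl{U}}}=\gamma_\mathsf{R}^\rho\vec{\bl{U}}$. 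Running exactly the argument of \Cref{prop: rep formula for compactly supported}, one obtains in the sense of distributions
\begin{equation*}
\mathsf{D}\tilde{\vec{\bl{U}}}
= \vec{\bl{F}}
- \gamma_\mathsf{T}'\gamma_\mathsf{R}^+\vec{\bl{U}}
+ \gamma_\mathsf{R}'\gamma_\mathsf{T}^+\vec{\bl{U}}
+ (\gamma_\mathsf{T}^\rho)'\gamma_\mathsf{R}^\rho\vec{\bl{U}}
- (\gamma_\mathsf{R}^\rho)'\gamma_\mathsf{T}^\rho\vec{\bl{U}}.
\end{equation*}
Convolving with the fundamental solution $\Phi$, invoking $\mathsf{N}\mathsf{D}\tilde{\vec{\bl{U}}}=\tilde{\vec{\bl{U}}}$ from \Cref{prop: fundamental sol}, and using the definitions $\mathcal{L}_\mathsf{T}=\mathsf{N}\gamma_\mathsf{T}'$ and $\mathcal{L}_\mathsf{R}=-\mathsf{N}\gamma_\mathsf{R}'$ then yields
\begin{equation*}
\tilde{\vec{\bl{U}}}
= \Phi*\vec{\bl{F}}
- \mathcal{L}_\mathsf{T}\gamma_\mathsf{R}^+\vec{\bl{U}}
- \mathcal{L}_\mathsf{R}\gamma_\mathsf{T}^+\vec{\bl{U}}
+ \mathcal{L}_\mathsf{T}^\rho\gamma_\mathsf{R}^\rho\vec{\bl{U}}
+ \mathcal{L}_\mathsf{R}^\rho\gamma_\mathsf{T}^\rho\vec{\bl{U}}.
\end{equation*}

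To conclude, I would restrict this identity to $\bl{x}\in\Omega^+\cap B_\rho$, where $\tilde{\vec{\bl{U}}}(\bl{x})=\vec{\bl{U}}(\bl{x})$, and recognize in the two $\partial B_\rho$-potentials the right-hand side of \eqref{eq: MU def} with $\Omega_\dag=B_\rho$---a legitimate choice since $\overline{\Omega}\cup\text{supp}(\mathsf{D}\vec{\bl{U}})\Subset B_\rho$ by hypothesis. \Cref{lem: harmonic contribution to rep formula} then identifies their sum as $\mathsf{M}\vec{\bl{U}}(\bl{x})$, which is \emph{independent} of $\rho$. Because $\rho$ may be chosen arbitrarily large, \eqref{eq: rep formula exterior domain with harmonic term} extends to all of $\Omega^+$; membership in $\bl{H}(\mathsf{D},\Omega^+)$ follows from \Cref{lem: mapping properties potentials} combined with the smoothness of $\mathsf{M}\vec{\bl{U}}$.

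The main technical obstacle I foresee is the careful bookkeeping of sign conventions when integration by parts is performed simultaneously on $\Gamma$ (whose outward normal with respect to $\Omega^+\cap B_\rho$ is $-\bl{n}$) and on $\partial B_\rho$ (where it coincides with the usual outward normal of the ball). The jump conventions of $\gamma_\mathsf{T}$, $\gamma_\mathsf{R}$ and the sign conventions built into $\mathcal{L}_\mathsf{T}$ versus $\mathcal{L}_\mathsf{R}$ must conspire so that the $\partial B_\rho$ contributions assemble into $+\mathsf{M}\vec{\bl{U}}$ while the $\Gamma$ contributions carry the minus signs demanded by \eqref{eq: rep formula exterior domain with harmonic term}. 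Beyond this bookkeeping, every remaining ingredient---symmetry of $\mathsf{N}$, the explicit action of $\mathsf{D}$ on the surface potentials, and the required $L^2$-type mapping properties---is already supplied by the preceding material.
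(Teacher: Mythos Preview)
Your proposal is correct and follows essentially the same route as the paper: truncate $\vec{\bl{U}}$ to $\Omega^+\cap B_\rho$, extend by zero, apply \Cref{prop: rep formula for compactly supported} across the two interfaces $\Gamma$ and $\partial B_\rho$, and identify the $\partial B_\rho$ contributions with $\mathsf{M}\vec{\bl{U}}$ via \Cref{lem: harmonic contribution to rep formula}. The only point the paper makes slightly more explicit is the justification for letting $\rho$ vary: the hypothesis grants $\vec{\bl{U}}\in\bl{H}(\mathsf{D},\Omega^+\cap B_\rho)$ for \emph{one} $\rho$, and the paper invokes elliptic regularity (harmonicity of $\vec{\bl{U}}$ outside $\text{supp}\,\vec{\bl{F}}$) to upgrade this to all larger radii before concluding on $\Omega^+$.
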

\begin{proof}
	Upon applying \Cref{prop: rep formula for compactly supported} to the distribution
	\begin{equation}
	\vec{\bl{U}}_0: = 
	\begin{cases}
	\vec{\bl{0}}, & \text{in }\Omega,\\
	\vec{\bl{U}}, & \text{in }\Omega^+\cap B_{\rho},\\
	\vec{\bl{0}}, & \text{in }\mathbb{R}^3\backslash\overline{B_{\rho}},
	\end{cases}
	\end{equation}
	that is compactly supported and belongs to $\bl{H}_{\text{loc}}\bra{\mathsf{D},\mathbb{R}^3\backslash\bra{\Gamma\cup\partial B_{\rho}}}$, we obtain
	\begin{equation}
	\vec{\bl{U}}_0 = \Phi * \vec{\bl{F}} - \mathcal{L}_{\mathsf{T}}\bra{\gamma^+_{\mathsf{R}}\vec{\bl{U}}} -\mathcal{L}_{\mathsf{R}}\bra{\gamma^+_{\mathsf{T}}\vec{\bl{U}}} +\mathcal{L}^{\rho}_{\mathsf{T}}\bra{\gamma_{\mathsf{R}}^{\rho}\vec{\bl{U}}} +\mathcal{L}_{\mathsf{R}}^{\rho}\bra{\gamma_{\mathsf{T}}^{\rho}\vec{\bl{U}}}
	\end{equation}
	as a functional on $(C^{\infty}_0(\mathbb{R}^3))^8$. Since $B_{\rho}$ satisfies the hypotheses imposed on $\Omega_{\dag}$ in the statement of \Cref{lem: harmonic contribution to rep formula}, we recognize that
	\begin{equation}
	\mathcal{L}^{\rho}_{\mathsf{T}}\bra{\gamma_{\mathsf{R}}^{\rho}\vec{\bl{U}}}\bra{\bl{x}} +\mathcal{L}_{\mathsf{R}}^{\rho}\bra{\gamma_{\mathsf{T}}^{\rho}\vec{\bl{U}}}\bra{\bl{x}} = \mathsf{M}\vec{\bl{U}}\bra{\bl{x}}
	\end{equation}
	for all $\bl{x}\in B_{\rho}$. Hence,
	\begin{equation}\label{eq: rep formula exterior domain with harmonic term inside proof}
	\vec{\bl{U}} = \Phi * \vec{\bl{F}} - \mathcal{L}_{\mathsf{T}}\gamma^+_{\mathsf{R}}\vec{\bl{U}} -\mathcal{L}_{\mathsf{R}}\gamma^+_{\mathsf{T}}\vec{\bl{U}} +\mathsf{M}\vec{\bl{U}}\qquad\qquad\text{in }\Omega^+\cap B_{\rho}.
	\end{equation}
	
	As in \Cref{lem: harmonic contribution to rep formula}, it follows from $\text{supp}\,\vec{\bl{F}}\subset B_{\rho}$ that $\vec{\bl{U}}$ is harmonic in $\mathbb{R}^3\backslash B_{\rho}$, and thus smooth everywhere outside the ball $B_{\rho}$ by well-known elliptic regularity theory \cite[Thm. 6.4]{mclean2000strongly}. Hence, the hypothesis that $\vec{\bl{U}}\in\bl{H}(\mathsf{D},\Omega^+\cap B_{\rho})$ for at least one ball $B_{\rho}$ satisfying the hypotheses in fact guarantees that it belongs to that space independently of the radius satisfying those same requirements. Therefore, \eqref{lem: rep formula exterior domain} holds in the whole of $\Omega^+$. Based on \Cref{lem: harmonic contribution to rep formula}, the mapping properties of the potentials established in \Cref{lem: mapping properties potentials} and \Cref{prop: fundamental sol}, we conclude that the equality \eqref{eq: rep formula exterior domain with harmonic term inside proof} holds in fact not only in $\mathbf{H}_{\text{loc}}(\mathsf{D},\Omega^+)$, but in $\mathbf{H}(\mathsf{D},\Omega^+)$---which is the desired result.
\end{proof}

\begin{lemma}\label{lem: decay condition M=0}
	Under the hypotheses of \Cref{lem: rep formula exterior domain},
	\begin{equation}
	\mathsf{M}\vec{\bl{U}}=\vec{\bl{0}}
	\end{equation}
	if and only if 
	\begin{equation}\label{eq: decay condition lemma}
	\norm{\vec{\bl{U}}\bra{\bl{z}}}\rightarrow 0\text{ uniformly as }\bl{z}\rightarrow \infty.
	\end{equation}
\end{lemma}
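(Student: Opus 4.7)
The plan is to start from the exterior representation formula of \Cref{lem: rep formula exterior domain}, rearranged as
\begin{equation*}
\vec{\bl{U}} - \mathsf{M}\vec{\bl{U}} = \Phi * \vec{\bl{F}} - \mathcal{L}_{\mathsf{T}}\gamma^+_{\mathsf{R}}\vec{\bl{U}} - \mathcal{L}_{\mathsf{R}}\gamma^+_{\mathsf{T}}\vec{\bl{U}},
\end{equation*}
and show that every term on the right-hand side decays uniformly to $\vec{\bl{0}}$ at infinity, so that $\|\vec{\bl{U}}(\bl{z})\| \to 0$ uniformly is equivalent to $\|\mathsf{M}\vec{\bl{U}}(\bl{z})\| \to 0$ uniformly. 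The key observation is that every nonzero entry of $\Phi$ is a first-order partial derivative of $G(\bl{z}) = (4\pi|\bl{z}|)^{-1}$, hence $\|\Phi(\bl{z})\| = \mathcal{O}(|\bl{z}|^{-2})$ at infinity. Because $\vec{\bl{F}}$ has compact support, this immediately yields uniform $|\bl{x}|^{-2}$ decay of $\Phi * \vec{\bl{F}}$. The explicit formulas \eqref{eq: wedge potential} and \eqref{eq: vee potential} show that the components of the surface potentials are built exclusively from integrals of $\nabla G(\bl{x}-\bl{y})$ against the boundary data on the compact surface $\Gamma$. Using the duality rewriting of \Cref{lem: int rep of boundary potential}, for $\bl{x}$ far from $\Gamma$ each pointwise value is the pairing of the fixed distribution $\gamma^-_\bullet\vec{\bl{U}}$ against the smooth test field $\bl{y}\mapsto\gamma^-_\bullet(\Phi_{\bl{x}}\vec{\bl{E}}_j)$ on $\Gamma$, whose Sobolev norms decay like $|\bl{x}|^{-2}$ by direct differentiation of $G$.

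Once this reduction is in place, the reverse implication $\mathsf{M}\vec{\bl{U}} = \vec{\bl{0}} \Rightarrow \vec{\bl{U}}(\bl{z}) \to \vec{\bl{0}}$ is immediate. For the forward implication, it suffices to prove that a smooth harmonic vector-field on $\mathbb{R}^3$ that vanishes at infinity is identically zero. From \Cref{lem: harmonic contribution to rep formula}, $\mathsf{M}\vec{\bl{U}} \in C^{\infty}(\mathbb{R}^3)^8$. Moreover, given any $\bl{x}\in\mathbb{R}^3$, choosing $\Omega_\dag$ large enough to contain $\bl{x}$ strictly in its interior and invoking \Cref{lem: Dirac of potential vanish} yields $\mathsf{D}\mathsf{M}\vec{\bl{U}}(\bl{x}) = \vec{\bl{0}}$; by the independence of $\mathsf{M}\vec{\bl{U}}$ from $\Omega_\dag$, this gives $\mathsf{D}\mathsf{M}\vec{\bl{U}} \equiv \vec{\bl{0}}$ on all of $\mathbb{R}^3$. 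Applying $\mathsf{D}$ a second time and using $\bl{d}^2 = \bm{\delta}^2 = 0$ together with the identity $\bl{d}\bm{\delta} + \bm{\delta}\bl{d} = \diag(-\Delta,\, -\bm{\Delta},\, -\bm{\Delta},\, -\Delta)$, each scalar component of $\mathsf{M}\vec{\bl{U}}$ is a classical harmonic function on $\mathbb{R}^3$. Any such function that vanishes at infinity is bounded and therefore constant by Liouville's theorem, and the decay at infinity forces this constant to be zero, so $\mathsf{M}\vec{\bl{U}} \equiv \vec{\bl{0}}$.

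The main obstacle I expect is the careful justification of the uniform decay of the \emph{distributional} surface potentials $\mathcal{L}_{\mathsf{T}}\gamma^+_{\mathsf{R}}\vec{\bl{U}}$ and $\mathcal{L}_{\mathsf{R}}\gamma^+_{\mathsf{T}}\vec{\bl{U}}$, since the traces are mere distributions in $\mathcal{H}_{\mathsf{R}}$ and $\mathcal{H}_{\mathsf{T}}$ rather than integrable functions. Fortunately, \Cref{lem: int rep of boundary potential} packages each pointwise value as a duality pairing with a smooth kernel on $\Gamma$, after which controlling the relevant Sobolev norms of that kernel for $|\bl{x}|$ large is a routine exercise --- essentially the argument in \cite[Chap.~7]{mclean2000strongly} transported to the Dirac setting. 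All remaining steps are direct applications of previously established lemmas or standard Liouville-type facts from classical potential theory.
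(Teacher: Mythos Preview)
Your proof is correct but takes a genuinely different route for the forward implication ``$\vec{\bl{U}}(\bl{z})\to 0 \Rightarrow \mathsf{M}\vec{\bl{U}}=\vec{\bl{0}}$'' than the paper does. Both arguments begin from the same reduction: the right-hand side of the rearranged representation formula decays like $|\bl{x}|^{-2}$ because the fundamental solution does and all data live on compact sets; this is exactly how the paper handles the converse direction, and your treatment of the distributional surface potentials via \Cref{lem: int rep of boundary potential} is the right way to make this rigorous. Where you diverge is in concluding $\mathsf{M}\vec{\bl{U}}\equiv\vec{\bl{0}}$ from its decay at infinity. You observe that $\mathsf{M}\vec{\bl{U}}$ is smooth and globally Dirac-harmonic (hence componentwise harmonic via $\mathsf{D}^2=-\Delta$) and invoke Liouville's theorem. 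The paper instead exploits the independence of $\mathsf{M}\vec{\bl{U}}$ from the auxiliary domain $\Omega_\dag$ established in \Cref{lem: harmonic contribution to rep formula}: fixing $\bl{x}$ and writing $\mathsf{M}\vec{\bl{U}}(\bl{x})$ as the sum of potentials on $\partial B_{\rho_2}$, it bounds the result by $\max_{\partial B_{\rho_2}}\|\vec{\bl{U}}\|$ (the $\rho_2^{-2}$ decay of $\nabla G$ exactly cancels the $\rho_2^{2}$ surface area) and sends $\rho_2\to\infty$. The paper's argument is more self-contained---it never leaves the potential-theoretic machinery already built and avoids appealing to an external Liouville theorem---while yours is arguably cleaner and more conceptual, trading the explicit sphere estimate for a classical fact about entire harmonic functions. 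Both are perfectly valid.
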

\begin{proof}
	The condition \eqref{eq: decay condition lemma} is well-defined, because as in \Cref{lem: rep formula exterior domain}, there exists a radius $\rho_1$ large enough that the vector-field $\vec{\bl{U}}$ is smooth outside $B_{\rho_1}$. For the same reason, the traces of $\vec{\bl{U}}$ appearing in the following inequalities are smooth boundary fields. 
	
	Recall that for $\bl{z}\neq\bl{0}$,
	\begin{equation}\label{eq: asymptotic bound on fundamental sol}
	\norm{\nabla G\bra{\bl{z}}}\lesssim\, \norm{\bl{z}}^{-2}.
	\end{equation}
	Therefore, it is easily seen from \eqref{eq: wedge potential} and \eqref{eq: vee potential} that if $\rho_2>\rho_1$, 
	\begin{equation}\label{eq: bound on norm of potential}
	\norm{\mathcal{L}^{\rho_2}_{\bullet}\bra{\gamma^{\rho_2}_{\bullet}\vec{\bl{U}}}\bra{\bl{x}}}\lesssim\, \rho_2^{-2}\norm{\int_{\partial B_{\rho_2}}\gamma_{\bullet}\vec{\bl{U}}\bra{\bl{y}}\dif\sigma\bra{\bl{y}}}\lesssim\, \max_{\bl{y}\in\partial B_{\rho_2}}\norm{\vec{\bl{U}}\bra{\bl{y}}}
	\end{equation}
	for all $\bl{x}\in B_{\rho_1}$, $\bullet = \mathsf{T}$ or $\mathsf{R}$. Notice that the left hand side of \eqref{eq: bound on norm of potential} is well-defined, because as in \Cref{lem: harmonic contribution to rep formula}, \Cref{lem: Dirac of potential vanish} and $\mathsf{D}^2=-\Delta$ guarantee that away from the boundary $\partial B_{\rho_2}$, the potentials are smooth harmonic vector fields.  No differential operator appears in the definition of the trace mappings $\gamma_{\mathsf{R}}$ and $\gamma_{\mathsf{T}}$. The independence of $\mathsf{M}\vec{\bl{U}}$ from its domain of definition thus directly yields one implication of the lemma upon taking $\rho_2\rightarrow\infty$.
	
	The converse follows from the exterior representation formula \eqref{eq: rep formula exterior domain with harmonic term} with $\mathsf{M}\vec{\bl{U}}=\vec{\bl{0}}$ and an analysis exploiting \eqref{eq: asymptotic bound on fundamental sol} that leads to an inequality similar to \eqref{eq: bound on norm of potential}. However, this time the potentials are computed as integrals (duality pairings) on the fixed boundary $\Gamma$ and an inverse square decay is inherited from the decay of the fundamental solution.
\end{proof}

\begin{proposition}[{Exterior representation formula}]
If $\vec{\bl{U}}\in\bl{H}_{\emph{loc}}(\mathsf{D},\Omega^+)$ is such that $\vec{\bl{U}}\bra{\bl{z}}\rightarrow 0$ as $\bl{z}\rightarrow \infty$ and $\vec{\bl{F}}:=\mathsf{D}\bl{U}$ is compactly supported. Then
\begin{equation}\label{eq: representation formula exterior no harmonic term}
\vec{\bl{U}}(\bl{x}) = \Phi * \vec{\bl{F}}(\bl{x}) - \mathcal{L}_{\mathsf{T}}\gamma_{\mathsf{R}}^+\vec{\bl{U}}(\bl{x}) -\mathcal{L}_{\mathsf{R}}^+\gamma_{\mathsf{T}}\vec{\bl{U}}(\bl{x}),\qquad\qquad \bl{x}\in\Omega^+.
\end{equation}
\end{proposition}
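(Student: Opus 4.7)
The plan is to combine the preceding two lemmas: first invoke \Cref{lem: rep formula exterior domain} to derive the representation formula with the harmonic remainder $\mathsf{M}\vec{\bl{U}}$, and then use \Cref{lem: decay condition M=0} to eliminate that remainder via the uniform decay hypothesis.

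\emph{Step 1: Verifying the hypotheses of \Cref{lem: rep formula exterior domain}.} Pick a radius $\rho>0$ large enough that $\Omega\cup\Gamma\Subset B_\rho$ and $\mathrm{supp}\,\vec{\bl{F}}\Subset\Omega^+\cap B_\rho$, which is possible because $\vec{\bl{F}}$ has compact support by assumption. A cut-off $\varphi\in C_0^{\infty}(\mathbb{R}^3)$ with $\varphi\equiv 1$ on $\overline{B_{\rho}}$ together with the hypothesis $\vec{\bl{U}}\in\bl{H}_{\text{loc}}(\mathsf{D},\Omega^+)$ yields $\vec{\bl{U}}|_{\Omega^+\cap B_\rho}\in\bl{H}(\mathsf{D},\Omega^+\cap B_\rho)$, since $\mathsf{D}\vec{\bl{U}}=\vec{\bl{F}}$ is square-integrable on that bounded set. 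Thus all hypotheses of \Cref{lem: rep formula exterior domain} hold for $\vec{\bl{U}}$.

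\emph{Step 2: Applying the preceding lemmas.} \Cref{lem: rep formula exterior domain} then delivers the identity
\begin{equation*}
\vec{\bl{U}} = \Phi * \vec{\bl{F}} - \mathcal{L}_{\mathsf{T}}\gamma^+_{\mathsf{R}}\vec{\bl{U}} - \mathcal{L}_{\mathsf{R}}\gamma^+_{\mathsf{T}}\vec{\bl{U}} + \mathsf{M}\vec{\bl{U}}
\end{equation*}
in $\mathbf{H}(\mathsf{D},\Omega^+)$. Since the decay condition $\|\vec{\bl{U}}(\bl{z})\|\rightarrow 0$ uniformly as $\bl{z}\rightarrow\infty$ is in force by assumption, \Cref{lem: decay condition M=0} guarantees that $\mathsf{M}\vec{\bl{U}}=\vec{\bl{0}}$, and substituting this into the previous display yields \eqref{eq: representation formula exterior no harmonic term}.

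\emph{Main obstacle and remark on pointwise validity.} The main subtlety is simply bookkeeping: matching the assumption $\vec{\bl{U}}\in\bl{H}_{\text{loc}}(\mathsf{D},\Omega^+)$ (as stated in the proposition) to the slightly different hypothesis of \Cref{lem: rep formula exterior domain}, which was phrased for distributions whose restriction to some $\Omega^+\cap B_\rho$ lies in $\bl{H}(\mathsf{D},\Omega^+\cap B_\rho)$. Once the cut-off argument above makes this identification routine, the conclusion that the identity holds pointwise on $\Omega^+$ follows because the right-hand side is, by \Cref{lem: Dirac of potential vanish} and the ellipticity $\mathsf{D}^2=-\bm{\Delta}$, a smooth harmonic vector field away from $\Gamma\cup\mathrm{supp}\,\vec{\bl{F}}$, so the $\mathbf{H}(\mathsf{D},\Omega^+)$ equality from \Cref{lem: rep formula exterior domain} can be evaluated at any $\bl{x}\in\Omega^+$.
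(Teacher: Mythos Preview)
Your proof is correct and follows exactly the route the paper intends: the proposition is stated immediately after \Cref{lem: rep formula exterior domain} and \Cref{lem: decay condition M=0} without an explicit proof precisely because it is their direct combination, which is what you carry out. Your additional remarks on matching the $\bl{H}_{\text{loc}}$ hypothesis and on pointwise validity are fine but not something the paper spells out.
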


\section{Boundary integral equations}\label{Sec: boundary integral operators}
Boundary integral equations are obtained by taking the traces $\gamma_{\mathsf{R}}$ and $\gamma_{\mathsf{T}}$ on both sides of the representation formulas \eqref{eq: representation formula compact} and \eqref{eq: representation formula exterior no harmonic term}. The operator form of the interior and exterior Calder\'on projectors defined on $\mathcal{H}_{\mathsf{R}}\times\mathcal{H}_{\mathsf{T}}$, which we denote $\mathsf{P}^-$ and $\mathsf{P}^+$ respectively, enter the Calder\'on identities
\begin{equation}\label{eq: interior Calderon identity}
\underbrace{\begin{pmatrix}
	\{\gamma_{\mathsf{R}}\}\,\mathcal{L}_{\mathsf{T}} +\frac{1}{2}\id & \{\gamma_{\mathsf{R}}\}\,\mathcal{L}_{\mathsf{R}} \\
	\{\gamma_{\mathsf{T}}\}\,\mathcal{L}_{\mathsf{T}} & \{\gamma_{\mathsf{T}}\}\,\mathcal{L}_{\mathsf{R}} +\frac{1}{2}\id
	\end{pmatrix}}_{\mathsf{P}^-}
\begin{pmatrix}
\gamma^-_{\mathsf{R}}\bra{\bl{U}}\\
\gamma^-_{\mathsf{T}}\bra{\bl{U}}
\end{pmatrix}
=
\begin{pmatrix}
\gamma^-_{\mathsf{R}}\bra{\bl{U}}\\
\gamma^-_{\mathsf{T}}\bra{\bl{U}}
\end{pmatrix},
\end{equation}
\begin{equation}
\underbrace{\begin{pmatrix}
	-\{\gamma_{\mathsf{R}}\}\,\mathcal{L}_{\mathsf{T}} +\frac{1}{2}\id & -\{\gamma_{\mathsf{R}}\}\,\mathcal{L}_{\mathsf{R}} \\
	-\{\gamma_{\mathsf{T}}\}\,\mathcal{L}_{\mathsf{T}} & -\{\gamma_{\mathsf{T}}\}\,\mathcal{L}_{\mathsf{R}} +\frac{1}{2}\id
	\end{pmatrix}}_{\mathsf{P}^+}
\begin{pmatrix}
\gamma^+_{\mathsf{R}}\bra{\bl{U}}\\
\gamma^+_{\mathsf{T}}\bra{\bl{U}}
\end{pmatrix}
=
\begin{pmatrix}
\gamma^+_{\mathsf{R}}\bra{\bl{U}}\\
\gamma^+_{\mathsf{T}}\bra{\bl{U}}
\end{pmatrix}.
\end{equation}

For example, extend a solution $\vec{\bl{U}}\in\bl{H}(\mathsf{D},\Omega)$ of the homogeneous Dirac equation in $\Omega^-$ to the whole of $\mathbb{R}^3$ by zero. Using \Cref{prop: rep formula for compactly supported},
\begin{equation}
\vec{\bl{U}}(\bl{x}) =  \mathcal{L}_{\mathsf{T}}\gamma_{\mathsf{R}}^-\vec{\bl{U}}(\bl{x}) +\mathcal{L}_{\mathsf{R}}^-\gamma_{\mathsf{T}}\vec{\bl{U}}(\bl{x}),\qquad\qquad \bl{x}\in\mathbb{R}^3\backslash\Gamma.
\end{equation}
Then, applying $\gamma_{\mathsf{R}}^-$ on both sides of the equation yields
\begin{equation}
\gamma_{\mathsf{R}}^-\vec{\bl{U}}(\bl{x}) =  \gamma_{\mathsf{R}}^-\mathcal{L}_{\mathsf{T}}\gamma_{\mathsf{R}}^-\vec{\bl{U}}(\bl{x}) +\gamma_{\mathsf{R}}^-\mathcal{L}_{\mathsf{R}}^-\gamma_{\mathsf{T}}\vec{\bl{U}}(\bl{x}),\qquad\qquad \bl{x}\in\Gamma.
\end{equation}
It is a simple calculation to verify that the jump identities of \Cref{lem: jump relations} implies 
		\begin{align}
\{\gamma_{\mathsf{T}}\}\mathcal{L}_{\mathsf{T}}(\vec{\bl{a}})&=\gamma_{\mathsf{T}}^-\mathcal{L}_{\mathsf{T}}(\vec{\bl{a}}), & \{\gamma_{\mathsf{R}}\}\mathcal{L}_{\mathsf{T}}(\vec{\bl{a}})&=\gamma_{\mathsf{R}}^-\mathcal{L}_{\mathsf{T}}(\vec{\bl{a}})-\frac{1}{2}\vec{\bl{a}},\\
\{\gamma_{\mathsf{T}}\}\mathcal{L}_{\mathsf{R}}(\vec{\bl{b}})&= \gamma_{\mathsf{T}}^-\mathcal{L}_{\mathsf{R}}(\vec{\bl{b}}) - \frac{1}{2}\vec{\bl{b}}, & \{\gamma_{\mathsf{R}}\}\mathcal{L}_{\mathsf{R}}(\vec{\bl{b}})&=\gamma_{\mathsf{R}}^-\mathcal{L}_{\mathsf{R}}(\vec{\bl{b}}).
\end{align}
Substituting the interior traces for the averages using these relations leads to the top row of \eqref{eq: interior Calderon identity}. The other identities are obtained similarly.

A classical argument, cf. \cite[lem. 6.18]{steinbach2007numerical}, shows that $\mathsf{P}^-$ and $\mathsf{P}^+$ are indeed projectors, i.e. $(\mathsf{P}^\mp)^2=\mathsf{P}^\mp$. The proof, which for the homogeneous Dirac equation is essentially based on \Cref{lem: Dirac of potential vanish}, also shows as a byproduct, cf. \cite[Thm. 3.7]{von1989boundary}, that the images of 
$\mathsf{P}^-$ and $\mathsf{P}^+$ are spaces of valid interior and exterior Cauchy data, respectively. In fact, as observed in \cite[Sec. 5]{buffa2003galerkin}, we have $\mathsf{P}^- + \mathsf{P}^+ = \id$. So the range of $\mathsf{P}^-$ coincides with the nullspace of $\mathsf{P}^+$ and vice versa. Therefore, we find the important property that $(\vec{\bm{a}},\vec{\mathbf{b}})\in\mathcal{H}_\mathsf{R}\times\mathcal{H}_\mathsf{T}$ is valid interior or exterior Cauchy data if and only if it lies in the nullspace of $\mathsf{P}^+$ or $\mathsf{P}^-$, respectively.

The two direct boundary integral equations of the first-kind related to \eqref{pb: R} and \eqref{pb: T} then read as follows. Given $\traceR\vecU=\veca\in\HR$, the task is to determine the unknown $\vecb=\traceT\vecU\in\HT$ by solving
\begin{equation}\label{pb: BR}
\color{red}
\traceR\LR(\vecb) = \frac{1}{2}\veca - \{\traceR\}\LT(\veca).\tag{B\textsf{R}}
\end{equation}
If $\vecb\in\HT$ is known instead, then we solve
\begin{equation}\label{pb: BT}
\color{blue}
\traceT\LT(\veca) = \frac{1}{2}\vecb - \{\traceT\}\LR(\vecb)\tag{B\textsf{T}}
\end{equation}
for the unknown $\veca\in\HR$.

\remark[{Duality and symmetry}]\label{sec: Duality and symmetry} Let us revisit the boundary value problems of \Cref{sec: Hodge--Dirac operators in 3D Euclidean space}. We wish to highlight that \eqref{pb: T} and \eqref{pb: R} are really the same problem in hiding. For example, we can always relabel the components of an unknown vector-field $\vecU\in\HD$ to
\begin{equation}\label{eq: iso domain}
    V_0 := U_3,\qquad\bl{V}_1:=-\bl{U}_2\qquad\bl{V}_2:=-\bl{U}_1\qquad\text{and}\qquad V_3:=V_0,
\end{equation}
and set
\begin{equation}\label{eq: iso boundary}
a_0 := - b_2\qquad \bl{a}_1 := \bl{n}\times\bl{b}_1\qquad\text{and}\qquad a_3 = b_0.
\end{equation}
This turns a problem \eqref{pb: T} for $\vecU$ into a problem \eqref{pb: R} for $\vecV\in\HD$.

Since both a solution $\vecU$ of \eqref{pb: T} and a solution $\vecV$ of \eqref{pb: R} can be written using the representation formula \eqref{eq: representation formula compact}, we expect \eqref{eq: iso boundary} to define an isomorphism $\Xi:\HT\rightarrow\HR$ that also turns one of the boundary integral equation into the other. And indeed, one can verify that
\begin{equation*}
\{\traceR\}\LT\left(\Xi\,\vecb\right)=\Xi\,\traceT\mathcal{L}_{\mathsf{R}}\left(\vecb\right)\qquad\qquad\text{and}\qquad\qquad\{\traceT\}\LT\left(\Xi\,\vecb\right) = \Xi\,\{\traceR\}\LR\left(\vecb\right).
\end{equation*}
Hence, \eqref{pb: BT} can be equivalently formulated as a problem \eqref{pb: BR} with unknown ``$\Xi^{-1}\veca$" and given data $\Xi\vecb$ .

Let us take a closer look at the bilinear forms naturally associated with the continuous first-kind boundary integral operators 
\begin{align}
\color{blue}\gamma_{\mathsf{T}}\mathcal{L}_{\mathsf{T}}:&\color{blue}\mathcal{H}_{\mathsf{R}}\rightarrow\mathcal{H}_{\mathsf{T}},\\ \color{red}\gamma_{\mathsf{R}}\mathcal{L}_{\mathsf{R}}:&\color{red}\mathcal{H}_{\mathsf{T}}\rightarrow\mathcal{H}_{\mathsf{R}},
\end{align} 
that map trace spaces to their dual spaces.

Let $\vec{\bl{a}}$ and $\vec{\bl{c}}$ be trial and test boundary vector fields lying in $\mathcal{H}_{\mathsf{R}}$, and similarly for $\vec{\bl{b}}$ and $\vec{\bl{d}}$ in $\mathcal{H}_{\mathsf{T}}$. Catching up with the calculations of \cref{sec: properties of the potentials}, we want to derive convenient integral formulas for
\begin{multline*}
\llangle\vec{\bl{c}},\gamma_{\mathsf{T}}\mathcal{L}_{\mathsf{T}}\left(\vec{\bl{a}}\right)\rrangle =-\langle \mathcal{c}_0,\gamma\,\text{div}\bm{\Psi}(\bl{a}_1)\rangle_{\Gamma} + \langle\bl{c}_1,\gamma_t\nabla\psi(\mathcal{a}_0)\rangle_{\tau}\\
+ \langle\bl{c}_1,\gamma_t\,\bl{curl}\bl{\Upsilon}(a_2)\rangle_{\tau} 
+ \langle c_2,\gamma_n\,\bl{curl}\bm{\Psi}(\bm{a}_1)\rangle_{\Gamma}
\end{multline*}
and
\begin{multline*}
\llangle\vec{\bl{d}},\gamma_{\mathsf{R}}\mathcal{L}_{\mathsf{R}}(\vec{\bl{b}})\rrangle =\langle d_0,\gamma_n\,\bl{curl}\bm{\Psi}(\bl{b}_1\times\bl{n})\rangle_{\Gamma} - \langle\bl{d}_1,\gamma_{\tau}\,\bl{curl}\bl{\Upsilon}(b_0)\rangle_{\tau}\\
+ \langle\bl{d}_1,\gamma_{\tau}\nabla\psi(\mathcal{b}_2)\rangle_{\tau} + \langle \mathcal{d}_2,\gamma\,\text{div}\bm{\Psi}(\bl{b}_1\times\bl{n})\rangle_{\Gamma}.
\end{multline*}

In the course of our derivation, we will often rely implicitly on the fact that $\bl{a}_1$ and $\bl{b}_1$ are tangential vector fields.

Using the fact that $\text{div}\,\bm{\Psi}(\bl{a}_1)=\psi\left(\text{div}_{\Gamma}\,\bl{a}_1\right)$ and $\text{div}\,\bm{\Psi}(\bl{b}_1\times\bl{n})=\psi\left(\text{curl}_{\Gamma}\bl{b}_1\right)$ \cite[Lem. 2.3]{MacCamy1984}, we immediately find that
\begin{align*}
\langle \mathcal{c}_0,\gamma\,\text{div}\bm{\Psi}(\bl{a}_1)\rangle_{\Gamma} = \int_{\Gamma}\int_{\Gamma}G_{\bl{x}}(\bl{y})\,\mathcal{c}_0(\bl{x})\,\text{div}_{\Gamma}\bl{a}_1(\bl{y})\dif\sigma(\bl{x})\dif\sigma(\bl{y})
\end{align*}
and
\begin{align}
\langle\mathcal{d}_2,\gamma\,\text{div}\bm{\Psi}(\bl{b}_1\times\bl{n})\rangle_{\Gamma} = \int_{\Gamma}\int_{\Gamma}G_{\bl{x}}(\bl{y})\,\mathcal{d}_2(\bl{x})\,\text{curl}_{\Gamma}\bl{b}_1(\bl{y})\dif\sigma(\bl{y})\dif\sigma(\bl{x}).
\end{align}
We know from \cite[Sec. 6.4]{claeys2017first} that 
\begin{align*}
&\langle\mathbf{d}_1,\gamma_{\tau}\,\bl{curl}\bl{\Upsilon}(b_0)\rangle_{\tau}\\ &\qquad\qquad= -\int_{\Gamma}\int_{\Gamma}G_{\bl{x}}(\bl{y})\,\left(\bl{n}(\bl{x})\times\bl{d}_1(\bl{x})\right)\cdot\left(\bl{n}(\bl{y})\times\nabla_{\Gamma}\,b_0(\bl{y})\right)\dif\sigma(\bl{y})\dif\sigma(\bl{x})\\
&\qquad\qquad= \int_{\Gamma}\int_{\Gamma}G_{\bl{x}}(\bl{y})\,\left(\bl{n}(\bl{x})\times\bl{d}_1(\bl{x})\right)\cdot\bl{curl}_{\Gamma}\,b_0(\bl{y})\dif\sigma(\bl{y})\dif\sigma(\bl{x}).
\end{align*}
Adapting the arguments, we also obtain
\begin{align*}
&\langle\mathbf{c}_1,\gamma_{t}\,\bl{curl}\bl{\Upsilon}(a_2)\rangle_{\tau} = \langle\mathbf{c}_1\times\mathbf{n},\gamma_{\tau}\,\bl{curl}\bl{\Upsilon}(\mathcal{a}_0)\rangle_{\tau}\\ &\qquad\qquad=\int_{\Gamma}\int_{\Gamma}G_{\bl{x}}(\bl{y})\,\left(\bl{n}(\bl{x})\times\bra{\bl{c}_1(\bl{x})\times\mathbf{n}(\mathbf{x})}\right)\cdot\bl{curl}_{\Gamma}\,a_2(\bl{y})\dif\sigma(\bl{y})\dif\sigma(\bl{x})\\
&\qquad\qquad=\int_{\Gamma}\int_{\Gamma}G_{\bl{x}}(\bl{y})\,\bl{c}_1(\bl{x})\cdot\bl{curl}_{\Gamma}\,a_2(\bl{y})\dif\sigma(\bl{y})\dif\sigma(\bl{x}).
\end{align*}
Again, from \cite[Sec. 6.4]{claeys2017first}, we can similarly extract
\begin{align*}
\langle c_2,\gamma_n\,\bl{curl}\bm{\Psi}(\mathbf{a}_1)\rangle_{\Gamma} &= -\int_{\Gamma}\int_{\Gamma}G_{\bl{x}}(\bl{y})\,\bl{a}_1(\bl{y})\cdot\left(\bl{n}(\bl{x})\times\nabla_{\Gamma}\,c_2(\bl{x})\right)\dif\sigma(\bl{y})\dif\sigma(\bl{x})\\
&=\int_{\Gamma}\int_{\Gamma}G_{\bl{x}}(\bl{y})\,\bl{a}_1(\bl{y})\cdot\bl{curl}_{\Gamma}\,c_2(\bl{x})\dif\sigma(\bl{y})\dif\sigma(\bl{x})
\end{align*}
and
\begin{align*}
\langle d_0,\gamma_n\,\bl{curl}\bl{\Psi}(\mathbf{b}_1\times\bl{n})\rangle_{\Gamma} &=-\int_{\Gamma}\int_{\Gamma}G_{\bl{x}}(\bl{y})\,\bra{\mathbf{n}(\mathbf{y})\times\bl{b}_1(\bl{y})}\cdot\bl{curl}_{\Gamma}\,d_0(\bl{x})\dif\sigma(\bl{y})\dif\sigma(\bl{x})
\end{align*}
Finally, it follows almost directly by definition that
\begin{align*}
\langle\bl{c}_1,\gamma_t\nabla\psi(\mathcal{a}_0)\rangle_{\tau} = -\int_{\Gamma}\int_{\Gamma}G_{\bl{x}}(\bl{y})\,\mathcal{a}_0(\bl{y})\,\text{div}_{\Gamma}\,\bl{c}_1(\bl{x})\dif\sigma(\bl{y})\dif\sigma(\bl{x}),
\end{align*}
and
\begin{align*}
\langle\bl{d}_1,\gamma_{\tau}\nabla\psi(\mathcal{b}_2)\rangle_{\tau} &=\int_{\Gamma}\int_{\Gamma}G_{\bl{x}}(\bl{y})\,\mathcal{b}_2(\bl{y})\,\text{curl}_{\Gamma}\,\bl{d}_1(\bl{x})\dif\sigma(\bl{y})\dif\sigma(\bl{x}).
\end{align*} 

Putting everything together yields the symmetric bilinear forms
\begin{greyFrame}
	\begin{align}\label{eq: wedge wedge bilinear form}
	\begin{split}
	\llangle\vec{\bl{c}},\gamma_{\mathsf{T}}\mathcal{L}_{\mathsf{T}}\left(\vec{\bl{a}}\right)\rrangle &=
	-\int_{\Gamma}\int_{\Gamma}G(\bl{x}-\bl{y})\,\mathcal{c}_0(\bl{x})\,\text{div}_{\Gamma}\bl{a}_1(\bl{y})\dif\sigma(\bl{x})\dif\sigma(\bl{y})\\
	&\qquad-\int_{\Gamma}\int_{\Gamma}G(\bl{x}-\bl{y})\,\mathcal{a}_0(\bl{y})\,\text{div}_{\Gamma}\,\bl{c}_1(\bl{x})\dif\sigma(\bl{y})\dif\sigma(\bl{x})\\
	&\qquad+\int_{\Gamma}\int_{\Gamma}G(\bl{x}-\bl{y})\,\bl{c}_1(\bl{x})\cdot\bl{curl}_{\Gamma}\,a_2(\bl{y})\dif\sigma(\bl{y})\dif\sigma(\bl{x})\\
	&\qquad+\int_{\Gamma}\int_{\Gamma}G(\bl{x}-\bl{y})\,\bl{a}_1(\bl{y})\cdot\bl{curl}_{\Gamma}\,c_2(\bl{x})\dif\sigma(\bl{y})\dif\sigma(\bl{x}),
	\end{split}
	\end{align}
\end{greyFrame}
\begin{greyFrame}
	\begin{align}\label{eq: vee vee bilinear form}
	\begin{split}
	\llangle\vec{\bl{d}},\gamma_{\mathsf{R}}\mathcal{L}_{\mathsf{R}}(\vec{\bl{b}})\rrangle & =
	-\int_{\Gamma}\int_{\Gamma}G(\bl{x}-\bl{y})\left(\bl{n}(\bl{y})\times\bl{b}_1(\bl{y})\right)\cdot\bl{curl}_{\Gamma}\,d_0(\bl{x})\dif\sigma(\bl{y})\dif\sigma(\bl{x})\\ &\qquad -\int_{\Gamma}\int_{\Gamma}G(\bl{x}-\bl{y})\left(\bl{n}(\bl{x})\times\bl{d}_1(\bl{x})\right)\cdot\bl{curl}_{\Gamma}\,b_0(\bl{y})\dif\sigma(\bl{y})\dif\sigma(\bl{x})\\ &\qquad+\int_{\Gamma}\int_{\Gamma}G(\bl{x}-\bl{y})\,\mathcal{b}_2(\bl{y})\,\text{curl}_{\Gamma}\,\bl{d}_1(\bl{x})\dif\sigma(\bl{y})\dif\sigma(\bl{x})\\
	&\qquad+\int_{\Gamma}\int_{\Gamma}G(\bl{x}-\bl{y})\,\mathcal{d}_2(\bl{x})\,\text{curl}_{\Gamma}\bl{b}_1(\bl{y})\dif\sigma(\bl{y})\dif\sigma(\bl{x}).
	\end{split}
	\end{align}
\end{greyFrame}
The above integrals must be understood as duality pairings.
\begin{remark}
	Let us highlight here, as we have announced in the introduction, that in the sense of \cite[Chap. 2.5]{kress1999linear}, these double integrals feature only weakly singular kernels!
\end{remark}

The non-local inner products
\begin{align}
\bra{u,v}_{-1/2} &:=\int_{\Gamma}\int_{\Gamma}G_\mathbf{x}\bra{\mathbf{y}}u(\mathbf{x})\,v(\mathbf{y})\dif\sigma(\mathbf{x})\dif\sigma(\mathbf{y}),\label{Dirac scalar prod}\\
\bra{\mathbf{u},\mathbf{v}}_{-1/2,\mathsf{T}} &:=\int_{\Gamma}\int_{\Gamma}G_\mathbf{x}\bra{\mathbf{y}}\mathbf{u}(\mathbf{x})\cdot\mathbf{v}(\mathbf{y})\dif\sigma(\mathbf{x})\dif\sigma(\mathbf{y}), \label{Dirac T prod}\\
\bra{\mathbf{u},\mathbf{v}}_{-1/2,\mathsf{R}} &:=\int_{\Gamma}\int_{\Gamma}G_\mathbf{x}\bra{\mathbf{y}}\bra{\mathbf{n}\bra{\mathbf{x}}\times\mathbf{u}(\mathbf{x})}\cdot\bra{\mathbf{n}\bra{\mathbf{y}}\times\mathbf{v}(\mathbf{y})}\dif\sigma(\mathbf{x})\dif\sigma(\mathbf{y}),\label{Dirac R prod}
\end{align}
respectively defined over $H^{-1/2}\bra{\Gamma}$, $\mathbf{H}_\mathsf{T}^{-1/2}\bra{\Gamma}:=(\mathbf{H}^{1/2}_\mathsf{T}\bra{\Gamma})'$ and $\mathbf{H}_\mathsf{R}^{-1/2}\bra{\Gamma}:=(\mathbf{H}^{1/2}_\mathsf{R}\bra{\Gamma})'$, where 
\begin{align}
&\mathbf{H}^{1/2}_\mathsf{T}(\Gamma):=\gamma_t(\mathbf{H}^1\bra{\Omega}) &\text{and} & &\mathbf{H}_\mathsf{R}^{1/2}(\Gamma):=\gamma_\tau(\mathbf{H}^1\bra{\Omega}),
\end{align}
are positive definite Hermitian forms, and induce equivalent norms on the trace spaces \cite[Sec. 4.1]{buffa2002maxwell}. In the following, we will concern ourselves with the coercivity and geometric structure of the bilinear forms
\begin{greyFrame}
	\begin{align}\label{bilinear form Dirac T}
	\begin{split}
	\color{blue}\mathcal{B}_{\mathsf{T}}\bra{\vec{\bl{a}},\vec{\bl{c}}}&:=\color{blue}\llangle\gamma_{\mathsf{T}}\mathcal{L}_{\mathsf{T}}\left(\vec{\bl{a}}\right),\vec{\bl{c}}\rrangle\\
	&\,\color{blue}=\bra{-\text{div}_{\Gamma}\,\bl{a}_1,\mathcal{c}_0}_{-1/2}   +\bra{\mathcal{a}_0,-\text{div}_{\Gamma}\,\bl{c}_1}_{-1/2} \\
	&\color{blue}\qquad\qquad+\bra{\bl{curl}_{\Gamma}\,a_2,\bl{c}_1}_{-1/2,\mathsf{T}} 
	+\bra{\bl{a}_1,\bl{curl}_{\Gamma}c_2}_{-1/2,\mathsf{T}}
	\end{split}
	\end{align}
\end{greyFrame}
\noindent and
\begin{greyFrame}
	\begin{align}\label{bilinear form Dirac R}
	\begin{split}
	\color{red}\mathcal{B}_{\mathsf{R}}\bra{\vec{\bl{b}},\vec{\bl{d}}}&:=\color{red}\llangle\gamma_{\mathsf{R}}\mathcal{L}_{\mathsf{R}}\left(\vec{\bl{b}}\right),\vec{\bl{d}}\rrangle\\
	&\,=\color{red}\left(\mathbf{b}_1,\nabla_{\Gamma}\,d_0\right)_{-1/2,\mathsf{R}}+\left(\nabla_{\Gamma}\,b_0,\mathbf{d}_1\right)_{-1/2,\mathsf{R}}\\
	&\color{red}\qquad\qquad+\left( \mathcal{b}_2,\text{curl}_{\Gamma}\bl{d}_1\right)_{-1/2}+\left( \text{curl}_{\Gamma}\bl{b}_1,\mathcal{d}_2\right)_{-1/2}.
	\end{split}
	\end{align}
\end{greyFrame}

\section{T-coercivity} \label{sec: T-coercivity}
Based on the space decomposition introduced by the next lemma, we design isomorphisms $\mathcal{H}_{\mathsf{R}}\rightarrow\mathcal{H}_{\mathsf{R}}$ and $\mathcal{H}_{\mathsf{T}}\rightarrow\mathcal{H}_{\mathsf{T}}$ that are instrumental for obtaining the desired generalized G{\aa}rding inequalities for $\mathcal{B}_{\mathsf{T}}$ and $\mathcal{B}_{\mathsf{R}}$.

\begin{lemma}[{See \cite[Sec. 7]{hiptmair2003coupling} and \cite[Lem. 2]{buffa2003galerkin}}]\label{lem: trace projection}
	There exists a continuous projection $\mathsf{Z}^{\Gamma}:\mathbf{H}^{-1/2}\bra{\text{\emph{div}}_{\Gamma},\Gamma}\rightarrow\mathbf{H}^{1/2}_\mathsf{R}(\Gamma)$ with 
	\begin{equation}
	\ker(\mathsf{Z}^{\Gamma})=\ker\bra{\text{\emph{div}}_{\Gamma}}\cap\mathbf{H}^{-1/2}\bra{\text{\emph{div}}_{\Gamma},\Gamma}
	\end{equation}
	and satisfying
	\begin{equation}\label{eq: divergence invariant}
	\text{\emph{div}}_{\Gamma}\bra{\mathsf{Z}^{\Gamma}(\mathbf{v})}=\text{\emph{div}}_{\Gamma}\bra{\mathbf{v}}.
	\end{equation}
\end{lemma}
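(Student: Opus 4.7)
The plan is to produce $\mathsf{Z}^{\Gamma}$ as the composition $\mathsf{Z}^{\Gamma} := \mathcal{R}\circ\text{div}_{\Gamma}$ for a suitable bounded right inverse $\mathcal{R}:H^{-1/2}_*(\Gamma)\to\mathbf{H}^{1/2}_\mathsf{R}(\Gamma)$ of the surface divergence. Once such $\mathcal{R}$ is in hand, every algebraic claim of the lemma follows automatically from the identity $\text{div}_{\Gamma}\circ\mathcal{R} = \mathrm{id}_{H^{-1/2}_*(\Gamma)}$: continuity is immediate by composition; idempotence,
\[
(\mathsf{Z}^{\Gamma})^2 = \mathcal{R}\circ(\text{div}_{\Gamma}\circ\mathcal{R})\circ\text{div}_{\Gamma} = \mathcal{R}\circ\text{div}_{\Gamma} = \mathsf{Z}^{\Gamma},
\]
holds by construction; equation \eqref{eq: divergence invariant} reads $\text{div}_{\Gamma}\mathsf{Z}^{\Gamma}\mathbf{v} = \text{div}_{\Gamma}\mathcal{R}(\text{div}_{\Gamma}\mathbf{v}) = \text{div}_{\Gamma}\mathbf{v}$; and the kernel identification follows from the injectivity of $\mathcal{R}$ (itself a consequence of $\text{div}_{\Gamma}\circ\mathcal{R}=\mathrm{id}$).

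The substantive work is therefore to construct $\mathcal{R}$ with range in the smoother space $\mathbf{H}^{1/2}_\mathsf{R}(\Gamma) = \gamma_\tau(\mathbf{H}^1(\Omega))$. My approach is as follows. Given $f\in H^{-1/2}_*(\Gamma)$, which has vanishing mean by definition, first solve the interior homogeneous Neumann problem $-\Delta\phi = 0$ in $\Omega$, $\gamma_n(\nabla\phi) = f$; the compatibility condition is satisfied, and $\phi\in H^1(\Omega)/\mathbb{R}$ depends continuously on $f$. Next, produce a regular vector potential $\mathbf{U}\in\mathbf{H}^1(\Omega)$ with $\mathbf{curl}\,\mathbf{U} = \nabla\phi$; such $\mathbf{U}$ exists because $\nabla\phi$ is solenoidal with vanishing flux through $\Gamma$. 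Setting $\mathcal{R}(f) := \gamma_\tau\mathbf{U}$ and exploiting the commutation relation $\text{div}_{\Gamma}\circ\gamma_\tau = \pm\gamma_n\circ\mathbf{curl}$ valid on $\mathbf{H}(\mathbf{curl},\Omega)$, one verifies $\text{div}_{\Gamma}\mathcal{R}(f) = \pm\gamma_n(\mathbf{curl}\,\mathbf{U}) = \pm\gamma_n(\nabla\phi) = \pm f$; the sign convention is absorbed by taking the Neumann data to be $\pm f$.

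The hard part is securing the $\mathbf{H}^1$ regularity of the vector potential $\mathbf{U}$. On a Lipschitz and possibly multiply connected domain, one cannot obtain this by directly inverting a vector Laplacian, since the relevant second-order elliptic regularity theory is only partially available in this setting; the construction must instead rely on regular decompositions of $\mathbf{H}(\mathbf{curl},\Omega)$ and on Stein-type bounded extensions across the Lipschitz boundary, as developed in \cite{hiptmair2003coupling} and \cite{buffa2003galerkin}. A secondary subtlety is the finite-dimensional cohomology of Neumann harmonic fields, which must be factored out in order to determine $\mathbf{U}$ uniquely and thereby make $\mathcal{R}$ well-defined. Once these analytic technicalities are granted, the short algebraic argument displayed above immediately delivers the projection $\mathsf{Z}^{\Gamma}$ with all the advertised properties.
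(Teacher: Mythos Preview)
The paper does not supply its own proof of this lemma; it is stated with a citation to \cite[Sec.~7]{hiptmair2003coupling} and \cite[Lem.~2]{buffa2003galerkin} and then used as a black box in the T-coercivity argument. There is therefore nothing in the paper to compare against beyond the pointer to those references.

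Your sketch is a sound outline and is in the spirit of the cited constructions. The algebraic reduction---build a bounded right inverse $\mathcal{R}:H^{-1/2}_*(\Gamma)\to\mathbf{H}^{1/2}_\mathsf{R}(\Gamma)$ of $\text{div}_\Gamma$ and set $\mathsf{Z}^{\Gamma}=\mathcal{R}\circ\text{div}_\Gamma$---is clean and immediately delivers idempotence, the kernel identity, and \eqref{eq: divergence invariant}. Your proposed realisation of $\mathcal{R}$ via an interior Neumann problem followed by an $\mathbf{H}^1$ vector potential is also essentially how the cited works proceed: the extra half-derivative on the boundary is obtained by passing through a volume field of $\mathbf{H}^1$ regularity and applying $\gamma_\tau$. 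One small remark: the existence of a curl-preimage for $\nabla\phi$ is not merely a matter of ``vanishing flux through $\Gamma$'' when $\Gamma$ has several components, but it does hold here because $\nabla\phi$ is automatically $L^2$-orthogonal to the Neumann harmonic fields (integrate by parts using $\text{div}\,\mathbf{h}=0$ and $\gamma_n\mathbf{h}=0$). Your acknowledgment that the $\mathbf{H}^1$ lifting on a merely Lipschitz domain is the genuine analytic input, and that cohomological obstructions must be handled to make $\mathcal{R}$ well defined, is exactly right; those are the places where \cite{hiptmair2003coupling,buffa2003galerkin} do the real work.
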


The closed subspaces $\mathbf{X}\bra{\text{div}_{\Gamma},\Gamma}:=\mathsf{Z}^{\Gamma}\bra{\mathbf{H}^{-1/2}\bra{\text{div}_{\Gamma},\Gamma}}$ and $\mathbf{N}\bra{\text{div}_{\Gamma},\Gamma}:=\ker\bra{\text{div}_{\Gamma}}\cap\mathbf{H}^{-1/2}\bra{\text{div}_{\Gamma},\Gamma}$ provide a stable direct regular decomposition \begin{equation}
\mathbf{H}^{-1/2}\bra{\text{div}_{\Gamma},\Gamma} = \mathbf{X}\bra{\text{div}_{\Gamma},\Gamma}\oplus \mathbf{N}\bra{\text{div}_{\Gamma},\Gamma}.
\end{equation} 
Hence, it follows from \eqref{eq: divergence invariant} that
\begin{equation}\label{eq:equiv norm on boundary}
\mathbf{v}\mapsto\norm{\text{div}_{\Gamma}\bra{\mathbf{v}}}_{-1/2} + \norm{(\id - \mathsf{Z}^{\Gamma})\,\mathbf{v}}_{-1/2}
\end{equation} 
also defines an equivalent norm in $\mathbf{H}^{-1/2}\bra{\text{div}_{\Gamma},\Gamma}$. 

Note that since, by Rellich's embedding theorem, $\mathbf{H}^{1/2}_\mathsf{R}(\Gamma)$ compactly embeds in the space $\mathbf{L}^2_{t}\bra{\Gamma}:=\{\mathbf{u}\in\mathbf{L}^2\bra{\Gamma}\,\vert\, \mathbf{u}\cdot\mathbf{n}\equiv 0\}$ of square-integrable tangential vector-fields, this is also the case for $\mathbf{X}\bra{\text{div}_{\Gamma},\Gamma}$.

From \Cref{lem: properties of surface div and curl}, $\text{div}_{\Gamma}:\mathbf{X}\bra{\text{div}_{\Gamma},\Gamma}\rightarrow H^{-1/2}_*(\Gamma)$ is a continuous bijection, thus the bounded inverse theorem guarantees the existence of a continuous inverse $\bra{\text{div}_{\Gamma}}^{\dag}:H^{-1/2}_*(\Gamma)\rightarrow \mathbf{X}\bra{\text{div}_{\Gamma},\Gamma}$ such that
\begin{align*}
\bra{\text{div}_{\Gamma}}^{\dag}\circ\text{div}_{\Gamma} &=\id\Big\vert_{\mathbf{X}\bra{\text{div}_{\Gamma},\Gamma}}, & &\text{div}_{\Gamma}\circ\bra{\text{div}_{\Gamma}}^{\dag}=\id\Big\vert_{H^{-1/2}_*(\Gamma)}.
\end{align*}

The existence of an operator $\mathbf{curl}_{\Gamma}^{\dag}:\mathbf{N}\bra{\text{div}_{\Gamma},\Gamma}\rightarrow H_*^{1/2}\bra{\Gamma}$ satisfying $\mathbf{curl}_{\Gamma}^{\dag}\circ\mathbf{curl}_{\Gamma}=\id$ and $\mathbf{curl}_{\Gamma}\circ\mathbf{curl}_{\Gamma}^{\dag} =\text{`}\mathbf{H}^{-1/2}(\text{div}_\Gamma,\Gamma)$-orthogonal projection onto (surface) divergence-free vector-fields' also follows by \Cref{lem: properties of surface div and curl}.

In the following, we will denote by $Q_*$ both the projection $H^{1/2}(\Gamma)\rightarrow H^{1/2}_*(\Gamma)$ onto mean zero functions and the projection $H^{-1/2}(\Gamma)\rightarrow H^{-1/2}_*(\Gamma) $ onto the space of annihilators of the characteristic function.
\begin{lemma}
	The bounded linear operator 
	\begin{equation*}
	\Xi:H_{*}^{-1/2}\bra{\Gamma}\times\mathbf{H}^{-1/2}(\text{\emph{div}}_\Gamma,\Gamma) \times H_*^{1/2}\bra{\Gamma}\rightarrow  H^{-1/2}_{*}\bra{\Gamma}\times\mathbf{H}^{-1/2}(\text{\emph{div}}_\Gamma,\Gamma) \times H_{*}^{1/2}\bra{\Gamma}
	\end{equation*}
	defined by
	\begin{equation*}
	\Xi
	\begin{pmatrix}
	\mathcal{a}_0\\
	\bl{a}_1\\
	a_2
	\end{pmatrix}
	=
	\begin{pmatrix}
	-\text{\emph{div}}_{\Gamma}\,\bl{a}_1 \\
	-\bra{\text{\emph{div}}_{\Gamma}}^{\dag}\bra{Q_*\mathcal{a}_0} + \bl{curl}_{\Gamma}\bra{Q_*a_2}\\
	\left(\mathbf{curl}_{\Gamma}\right)^{\dag}\bra{\bra{\id-\mathsf{Z}^{\Gamma}}\bl{a}_1}
	\end{pmatrix}
	\end{equation*}
	is a continuous involution. In particular, $\Xi$ is an isomorphism of Banach spaces.
\end{lemma}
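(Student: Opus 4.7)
The plan is to establish two properties of $\Xi$ from which the Banach space isomorphism claim follows at once: that $\Xi$ is bounded between the product spaces displayed in the statement, and that $\Xi \circ \Xi = \id$. The bulk of the work is the second property, which reduces to a chain of operator identities already recorded in \Cref{lem: properties of surface div and curl}, \Cref{lem: trace projection}, and the definitions of the partial inverses $(\text{div}_\Gamma)^\dag$ and $\mathbf{curl}_\Gamma^\dag$ surrounding them.

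For continuity, I would observe that each coordinate of $\Xi$ is a finite composition of operators whose boundedness has already been established: the surface differential operators $\text{div}_\Gamma$ and $\mathbf{curl}_\Gamma$ from \Cref{lem: properties of surface div and curl}; their partial inverses $(\text{div}_\Gamma)^\dag$ and $\mathbf{curl}_\Gamma^\dag$ (bounded by the bounded inverse theorem and via the equivalent norm \eqref{eq:equiv norm on boundary}); the projector $\mathsf{Z}^\Gamma$ and its complement $\id - \mathsf{Z}^\Gamma$ from \Cref{lem: trace projection}; and the zero-mean projectors $Q_*$. A quick check also confirms that the three output components land in $H^{-1/2}_*(\Gamma)$, $\mathbf{H}^{-1/2}(\text{div}_\Gamma,\Gamma)$, and $H^{1/2}_*(\Gamma)$ respectively, so that $\Xi$ really maps the product space into itself.

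To verify the involution, I would set $\vec{\bl{a}}' := \Xi \vec{\bl{a}}$ and unpack $\Xi \vec{\bl{a}}'$ component by component. The first slot gives $-\text{div}_\Gamma \bl{a}_1' = \text{div}_\Gamma (\text{div}_\Gamma)^\dag \mathcal{a}_0 - \text{div}_\Gamma \mathbf{curl}_\Gamma a_2 = \mathcal{a}_0$ using $\text{div}_\Gamma \circ \mathbf{curl}_\Gamma = 0$ (\Cref{lem: properties of surface div and curl}) and the right-inverse property of $(\text{div}_\Gamma)^\dag$. The third slot equals $\mathbf{curl}_\Gamma^\dag((\id - \mathsf{Z}^\Gamma)\bl{a}_1')$; since $(\text{div}_\Gamma)^\dag \mathcal{a}_0 \in \mathbf{X}(\text{div}_\Gamma,\Gamma)$ is fixed by $\mathsf{Z}^\Gamma$ while $\mathbf{curl}_\Gamma a_2 \in \mathbf{N}(\text{div}_\Gamma,\Gamma) = \ker \mathsf{Z}^\Gamma$ passes unchanged, this collapses to $\mathbf{curl}_\Gamma^\dag \mathbf{curl}_\Gamma a_2 = a_2$. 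For the middle slot, \eqref{eq: divergence invariant} yields $(\text{div}_\Gamma)^\dag \text{div}_\Gamma \bl{a}_1 = (\text{div}_\Gamma)^\dag \text{div}_\Gamma \mathsf{Z}^\Gamma \bl{a}_1 = \mathsf{Z}^\Gamma \bl{a}_1$, and the stated property $\mathbf{curl}_\Gamma \circ \mathbf{curl}_\Gamma^\dag = \id$ on divergence-free fields gives $\mathbf{curl}_\Gamma \mathbf{curl}_\Gamma^\dag (\id - \mathsf{Z}^\Gamma)\bl{a}_1 = (\id - \mathsf{Z}^\Gamma)\bl{a}_1$, so the middle output sums to $\mathsf{Z}^\Gamma \bl{a}_1 + (\id - \mathsf{Z}^\Gamma)\bl{a}_1 = \bl{a}_1$.

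The one subtle point I expect to watch is the identity $\mathbf{curl}_\Gamma \mathbf{curl}_\Gamma^\dag = \id$ on $\mathbf{N}(\text{div}_\Gamma,\Gamma)$ invoked in the middle slot: this is stated in the paragraph defining $\mathbf{curl}_\Gamma^\dag$, but it tacitly identifies the range of $\mathbf{curl}_\Gamma$ with the full space of divergence-free tangential fields, a gap filled only modulo the finite-dimensional space of harmonic surface fields on boundaries with nontrivial topology. Accepting this as stated, $\Xi^2 = \id$ together with continuity forces $\Xi$ to be its own continuous inverse, and the Banach space isomorphism claim follows immediately.
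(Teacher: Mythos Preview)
Your proposal is correct and follows essentially the same approach as the paper: a direct, component-wise evaluation of $\Xi^2$ using the defining properties of $(\text{div}_\Gamma)^\dag$, $\mathbf{curl}_\Gamma^\dag$, $\mathsf{Z}^\Gamma$, and $Q_*$. The paper's proof is in fact terser than yours---it writes out the three-line matrix computation and simplifies to $(Q_*\mathcal{a}_0,\bl{a}_1,Q_*a_2)^\top$, then uses that the domain already consists of mean-zero scalars to drop $Q_*$. Your remark about the topological subtlety in $\mathbf{curl}_\Gamma\circ\mathbf{curl}_\Gamma^\dag=\id$ on $\mathbf{N}(\text{div}_\Gamma,\Gamma)$ is well-taken and applies equally to the paper's own argument, which invokes the same identity without further comment.
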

\begin{proof}
	We directly evaluate 
	\begin{align*}
	&\Xi^2
	\begin{pmatrix}
	\mathcal{a}_0\\
	\bl{a}_1\\
	a_2
	\end{pmatrix}
	=
	\Xi\begin{pmatrix}
	-\text{div}_{\Gamma}\,\bl{a}_1\\
	-\bra{\text{div}_{\Gamma}}^{\dag}\bra{Q_*\mathcal{a}_0} + \bl{curl}_{\Gamma}\bra{Q_*a_2}\\
	\left(\mathbf{curl}_{\Gamma}\right)^{\dag}\bra{\bra{\id-\mathsf{Z}^{\Gamma}}\bl{a}_1}
	\end{pmatrix}\\
	&\qquad=
	\begin{psmallmatrix}
	\text{div}_{\Gamma}\bra{\bra{\text{div}_{\Gamma}}^{\dag}\bra{Q_*\mathcal{a}_0}}-\text{div}_{\Gamma}\bra{\bl{curl}_{\Gamma}\bra{Q_*a_2}}\\
	\bra{\text{div}_{\Gamma}}^{\dag}\bra{Q_*\bra{\text{div}_{\Gamma}\,\bl{a}_1}} + \bl{curl}_{\Gamma}\bra{Q_*\left(\mathbf{curl}_{\Gamma}\right)^{\dag}\bra{\bra{\id-\mathsf{Z}^{\Gamma}}\bl{a}_1}}\\
	-\left(\mathbf{curl}_{\Gamma}\right)^{\dag}\bra{\bra{\id-\mathsf{Z}^{\Gamma}}\bra{\bra{\text{div}_{\Gamma}}^{\dag}\bra{Q_*\mathcal{a}_0}}} +\left(\mathbf{curl}_{\Gamma}\right)^{\dag}\bra{\bra{\id-\mathsf{Z}^{\Gamma}}\bra{\bl{curl}_{\Gamma}\bra{Q_*a_2}}}
	\end{psmallmatrix}\\
	&\qquad=
	\begin{pmatrix}
	Q_*\mathcal{a}_0\\
	\mathsf{Z}^{\Gamma}\bl{a}_1 + \bra{\id-\mathsf{Z}^{\Gamma}}\bl{a}_1\\
	Q_*a_2
	\end{pmatrix}=\begin{pmatrix}
	\mathcal{a}_0\\
	\bl{a}_1\\
	a_2
	\end{pmatrix}.
	\end{align*}
\end{proof}

\begin{greyFrame}
	\begin{proposition}
		There exists a constant $C>0$ and a compact bilinear form $\mathcal{C}:\mathcal{H}_{\mathsf{R}}\times\mathcal{H}_{\mathsf{R}}\rightarrow\mathbb{R}$ such that   
		\begin{equation}
		\big\vert\llangle\Xi\,\vec{\bl{a}},\gamma_{\mathsf{T}}\mathcal{L}_{\mathsf{T}}\left(\vec{\bl{a}}\right)\rrangle_{\times} + \mathcal{C}\bra{\vec{\bl{a}},\vec{\bl{a}}}\big\vert\geq C\norm{\vec{\bl{a}}}^2_{\mathcal{H}_{\mathsf{R}}}\qquad\forall\vec{\bl{a}}\in\mathcal{H}_{\mathsf{R}}.
		\end{equation}
	\end{proposition}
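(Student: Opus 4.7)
My approach will be a T-coercivity argument: substitute $\vec{\bl{c}}=\Xi\vec{\bl{a}}$ into the integral representation \eqref{bilinear form Dirac T} of $\mathcal{B}_{\mathsf{T}}$ and exhibit the result as a sum of coercive quadratic forms plus compact cross-terms to be absorbed into $\mathcal{C}$.

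\emph{Step 1 (substitution and simplification).} With $\vec{\bl{c}}=\Xi\vec{\bl{a}}$ as in the statement, the identities $\text{div}_{\Gamma}\circ\bl{curl}_{\Gamma}\equiv 0$, $\text{div}_{\Gamma}\circ(\text{div}_{\Gamma})^{\dag}=\id$ on $H^{-1/2}_*(\Gamma)$, and $\bl{curl}_{\Gamma}\circ(\mathbf{curl}_{\Gamma})^{\dag}=\id$ on $\mathbf{N}(\text{div}_{\Gamma},\Gamma)$ modulo a finite-rank harmonic projection $\mathsf{P}_{\mathfrak{H}}$ (encoding the cohomology of $\Gamma$) should reduce $\mathcal{B}_{\mathsf{T}}(\vec{\bl{a}},\Xi\vec{\bl{a}})$ to four diagonal positive-semidefinite pieces $(\text{div}_{\Gamma}\bl{a}_1,\text{div}_{\Gamma}\bl{a}_1)_{-1/2}$, $(\mathcal{a}_0,Q_*\mathcal{a}_0)_{-1/2}$, $(\bl{curl}_{\Gamma}Q_*a_2,\bl{curl}_{\Gamma}Q_*a_2)_{-1/2,\mathsf{T}}$ and $(\bl{a}_1,(\id-\mathsf{Z}^{\Gamma})\bl{a}_1)_{-1/2,\mathsf{T}}$, plus a mixed cross-term $-(\bl{curl}_{\Gamma}a_2,(\text{div}_{\Gamma})^{\dag}Q_*\mathcal{a}_0)_{-1/2,\mathsf{T}}$ and the $\mathsf{P}_{\mathfrak{H}}$-remainder. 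The key computation is that $\text{div}_{\Gamma}\bl{c}_1=-Q_*\mathcal{a}_0$, that $\bl{curl}_{\Gamma}a_2=\bl{curl}_{\Gamma}(Q_*a_2)$, and that $\bl{curl}_{\Gamma}c_2=(\id-\mathsf{Z}^{\Gamma})\bl{a}_1-\mathsf{P}_{\mathfrak{H}}((\id-\mathsf{Z}^{\Gamma})\bl{a}_1)$.

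\emph{Step 2 (coercivity and $\mathcal{H}_{\mathsf{R}}$-norm recovery).} Each of the non-local inner products \eqref{Dirac scalar prod}--\eqref{Dirac R prod} is coercive on its trace space by the classical $H^{-1/2}$-ellipticity of single-layer potentials for the 3D Laplacian on Lipschitz surfaces. Writing $\mathcal{a}_0=Q_*\mathcal{a}_0+\pi_0\mathcal{a}_0$, $a_2=Q_*a_2+\pi_0 a_2$ with $\pi_0:=\id-Q_*$ the rank-one projection onto constants, and $\bl{a}_1=\mathsf{Z}^{\Gamma}\bl{a}_1+(\id-\mathsf{Z}^{\Gamma})\bl{a}_1$, the four diagonal pieces respectively bound from below $\|\text{div}_{\Gamma}\bl{a}_1\|_{-1/2}^2$, $\|Q_*\mathcal{a}_0\|_{-1/2}^2$, $\|Q_*a_2\|_{1/2}^2$ (using continuity of the left-inverse $(\mathbf{curl}_{\Gamma})^{\dag}$), and $\|(\id-\mathsf{Z}^{\Gamma})\bl{a}_1\|_{-1/2,\mathsf{T}}^2$. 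Combined with the equivalent norm \eqref{eq:equiv norm on boundary} on $\mathbf{H}^{-1/2}(\text{div}_{\Gamma},\Gamma)$, this controls the full $\mathcal{H}_{\mathsf{R}}$-norm of $\vec{\bl{a}}$ up to the finite-dimensional complement spanned by $\pi_0\mathcal{a}_0$, $\pi_0 a_2$ and $\mathsf{P}_{\mathfrak{H}}(\id-\mathsf{Z}^{\Gamma})\bl{a}_1$.

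\emph{Step 3 (building $\mathcal{C}$ and conclusion).} The bilinear form $\mathcal{C}$ aggregates: the mixed term from Step~1, compact because $(\text{div}_{\Gamma})^{\dag}Q_*$ lands in $\mathbf{X}(\text{div}_{\Gamma},\Gamma)\hookrightarrow\mathbf{L}^2_t(\Gamma)\hookrightarrow\mathbf{H}^{-1/2}_{\mathsf{T}}(\Gamma)$ with the first embedding compact by Rellich; the $\mathsf{Z}^{\Gamma}$-cross-terms in $\bl{a}_1$, compact for the same reason; the $\mathsf{P}_{\mathfrak{H}}$-remainder and the rank-one piece $(\pi_0\mathcal{a}_0,Q_*\mathcal{a}_0)_{-1/2}$, both finite-rank; and additional finite-rank bilinear forms used to majorize $\|\pi_0\mathcal{a}_0\|_{-1/2}^2$ and $\|\pi_0 a_2\|_{1/2}^2$, which are compact because they act on the finite-dimensional space of constants. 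Adding $\mathcal{C}$ to $\mathcal{B}_{\mathsf{T}}(\vec{\bl{a}},\Xi\vec{\bl{a}})$ and invoking the lower bounds of Step~2 yields the claim. The main obstacle is the rigorous verification of compactness for the mixed and $\mathsf{Z}^{\Gamma}$-cross-terms---which requires chaining the Rellich embedding $\mathbf{X}(\text{div}_{\Gamma},\Gamma)\hookrightarrow\mathbf{L}^2_t(\Gamma)$ with the continuous embedding $\mathbf{L}^2_t(\Gamma)\hookrightarrow\mathbf{H}^{-1/2}_{\mathsf{T}}(\Gamma)$---together with the precise identification of $\mathsf{P}_{\mathfrak{H}}$ as the harmonic surface cohomology, which is exactly where the Betti-number-sized null-space announced in the introduction ultimately resides.
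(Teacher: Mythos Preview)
Your proposal is correct and follows the same T-coercivity strategy as the paper: substitute $\vec{\bl{c}}=\Xi\vec{\bl{a}}$ into the bilinear form \eqref{bilinear form Dirac T}, use the algebraic identities for $(\text{div}_{\Gamma})^{\dag}$, $(\mathbf{curl}_{\Gamma})^{\dag}$ and $\text{div}_{\Gamma}\circ\bl{curl}_{\Gamma}=0$ to reduce to positive diagonal blocks plus compact cross-terms, then invoke the equivalent norm \eqref{eq:equiv norm on boundary} and the $H^{-1/2}$-ellipticity of the single-layer inner products. Your write-up is in fact more careful than the paper's in two respects: you correctly assign the roles of $\mathcal{a}_0\in H^{-1/2}(\Gamma)$ and $a_2\in H^{1/2}(\Gamma)$ (the paper's displayed computation appears to swap them), and you explicitly track the finite-rank harmonic remainder $\mathsf{P}_{\mathfrak{H}}$ arising from $\bl{curl}_{\Gamma}\circ(\mathbf{curl}_{\Gamma})^{\dag}$ when $\Gamma$ has nontrivial first cohomology, which the paper absorbs silently into the symbol $\hat{=}$.
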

\end{greyFrame}
\begin{proof}
	The operator $\mathbf{curl}_{\Gamma}:H^1_*(\Gamma)\rightarrow \mathbf{H}^{-1/2}(\text{div}_\Gamma)$ is a continuous injection with closed range, it is thus bounded below. Since the mean operator has finite rank, it is compact. Moreover, $\bra{\text{div}_{\Gamma}}^{\dag}\bra{H^{-1/2}_*(\Gamma)}\subset\mathbf{H}^{1/2}_\mathsf{R}(\Gamma)$ is compactly embedded in $\mathbf{L}_t^2(\Gamma)$. Hence, the proof ultimately follows from
	\begin{align*}
	&\llangle\Xi\,\vec{\bl{a}},\gamma_{\mathsf{T}}\mathcal{L}_{\mathsf{T}}\left(\vec{\bl{a}}\right)\rrangle_{\times} \,\hat{=}\,
	\bra{\text{div}_{\Gamma}\,\bl{a}_1,\text{div}_{\Gamma}\,\bl{a}_1}_{-1/2} + \bra{a_2,Q_*a_2}_{-1/2} \\
	&\qquad\qquad\qquad+ \bra{\bra{\text{div}_{\Gamma}}^{\dag}Q_*a_2,\bl{curl}_{\Gamma}\,\mathcal{a}_0}_{-1/2}
	+\bra{\bl{curl}_{\Gamma}\,Q_*\mathcal{a}_0,\bl{curl}_{\Gamma}\,\mathcal{a}_0}_{-1/2}\\
	&\qquad\qquad\qquad+\bra{\bl{a}_1,\bra{\id-\mathsf{Z}^{\Gamma}}\bl{a}_1}_{-1/2}
	\end{align*}
	and the opening observations of this section.
\end{proof}

Since $\text{curl}_{\Gamma}\bra{\mathbf{d}} = \text{div}_{\Gamma}\bra{\mathbf{n}\times\mathbf{d}}$ for all $\mathbf{d}\in\mathbf{H}^{-1/2}(\text{curl}_\Gamma,\Gamma)$, tinkering with the signs and introducing rotations in the definition of $\Xi$ easily leads to an analogous generalized G{\aa}rding inequality for $\gamma_{\mathsf{R}}\mathcal{L}_{\mathsf{R}}$.

\begin{greyFrame}
	\begin{corollary}\label{cor: BIOs are Fredholm}
		The boundary integral operators $\gamma_{\mathsf{T}}\mathcal{L}_{\mathsf{T}}:\mathcal{H}_{\mathsf{R}}\rightarrow\mathcal{H}_{\mathsf{T}}$ and $\gamma_{\mathsf{R}}\mathcal{L}_{\mathsf{R}}:\mathcal{H}_{\mathsf{T}}\rightarrow\mathcal{H}_{\mathsf{R}}$ are Fredholm of index $0$.
	\end{corollary}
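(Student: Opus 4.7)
The plan is to deduce the Fredholm property directly from the generalized Gårding inequality established in the preceding proposition, exploiting the fact that $\Xi$ is an isomorphism. The key observation is that T-coercivity (coercivity modulo a compact perturbation after composition with an isomorphism) is well known to imply the Fredholm alternative with index zero; see e.g. \cite{buffa2003galerkin, hiptmair2003coupling} or the general framework in \cite[Thm. 2.34]{mclean2000strongly} adapted by composition.

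First, I would rephrase the Gårding inequality of the previous proposition in operator form. Let $\mathsf{A}:\mathcal{H}_{\mathsf{R}}\to\mathcal{H}_{\mathsf{T}}=\mathcal{H}_{\mathsf{R}}'$ denote $\gamma_{\mathsf{T}}\mathcal{L}_{\mathsf{T}}$, and let $\mathsf{K}:\mathcal{H}_{\mathsf{R}}\to\mathcal{H}_{\mathsf{T}}$ be the continuous linear operator induced by the compact bilinear form $\mathcal{C}$ via the duality pairing. Since $\mathcal{C}$ is compact, $\mathsf{K}$ is a compact operator (e.g. because it factors through a compact embedding of the regular components of $\mathcal{H}_{\mathsf{R}}$ into $L^2$-type spaces, cf. Rellich and the decomposition discussion preceding the proposition). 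The Gårding inequality then reads
\begin{equation*}
\big|\llangle\Xi\,\vec{\bl{a}},(\mathsf{A}+\mathsf{K})\vec{\bl{a}}\rrangle\big|\geq C\,\|\vec{\bl{a}}\|^2_{\mathcal{H}_{\mathsf{R}}}\qquad\forall\,\vec{\bl{a}}\in\mathcal{H}_{\mathsf{R}}.
\end{equation*}

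Next, I would invoke the standard T-coercivity argument: the above estimate is equivalent to saying that the bilinear form $(\vec{\bl{a}},\vec{\bl{c}})\mapsto \llangle\Xi\vec{\bl{c}},(\mathsf{A}+\mathsf{K})\vec{\bl{a}}\rrangle$ is coercive on $\mathcal{H}_{\mathsf{R}}\times\mathcal{H}_{\mathsf{R}}$ (up to possibly complex conjugation). By the Lax–Milgram lemma, the operator $\Xi^{*}(\mathsf{A}+\mathsf{K}):\mathcal{H}_{\mathsf{R}}\to\mathcal{H}_{\mathsf{R}}'=\mathcal{H}_{\mathsf{T}}$ is a continuous bijection, hence an isomorphism. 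Since $\Xi$ is itself an isomorphism of $\mathcal{H}_{\mathsf{R}}$ (as shown to be a continuous involution) and therefore so is its adjoint $\Xi^{*}$, it follows that $\mathsf{A}+\mathsf{K}$ is an isomorphism as well. Thus $\mathsf{A}$ differs from an isomorphism by the compact operator $-\mathsf{K}$, so by the classical Riesz–Schauder theory (e.g. \cite[Thm. 2.34]{mclean2000strongly}), $\mathsf{A}=\gamma_{\mathsf{T}}\mathcal{L}_{\mathsf{T}}$ is Fredholm with index zero.

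The argument for $\gamma_{\mathsf{R}}\mathcal{L}_{\mathsf{R}}$ is identical once we invoke the analogous generalized Gårding inequality mentioned in the remark following the proposition, obtained by conjugating $\Xi$ with tangential rotations $\bl{v}\mapsto\bl{n}\times\bl{v}$ and using the identity $\text{curl}_{\Gamma}\bra{\mathbf{d}} = \text{div}_{\Gamma}\bra{\mathbf{n}\times\mathbf{d}}$. Since essentially nothing beyond citing the previous proposition, recalling that $\Xi$ is an isomorphism, and invoking the Riesz–Schauder theorem is required, no step is truly hard; the only subtlety is to justify that the bilinear form $\mathcal{C}$ indeed induces a compact operator, which reduces to the compactness statements already implicitly used in the proof of the Gårding inequality (Rellich's theorem applied to the regular part $\mathbf{X}(\text{div}_\Gamma,\Gamma)\subset\mathbf{L}^2_t(\Gamma)$ and the finite-rank projections $Q_*$).
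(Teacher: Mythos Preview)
Your proposal is correct and follows exactly the standard T-coercivity route that the paper implicitly relies on: the corollary is stated without proof precisely because the generalized G{\aa}rding inequality of the preceding proposition, combined with the fact that $\Xi$ is an isomorphism and that compact perturbations of isomorphisms are Fredholm of index zero, yields the result immediately. Your write-up simply makes this implicit step explicit, and the only minor cosmetic point is that $\mathcal{C}$ need not literally be of the form $\llangle\Xi\cdot,\mathsf{K}\cdot\rrangle$, but since $\Xi$ is an isomorphism this is harmless, as you effectively note.
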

\end{greyFrame}

\section{Kernels}\label{sec: Kernels}

We conclude from \Cref{cor: BIOs are Fredholm} that the nullspaces of $\gamma_{\mathsf{T}}\mathcal{L}_{\mathsf{T}}$ and $\gamma_{\mathsf{R}}\mathcal{L}_{\mathsf{R}}$ are finite dimensional. In this section, we proceed similarly as in \cite[Sec. 7.1]{claeys2017first} and \cite[Sec. 3]{claeys2018first} to characterize them explicitly.

Suppose that $\vec{\bl{a}}\in\mathcal{H}_{\mathsf{R}}$ is such that $\gamma_{\mathsf{T}}\mathcal{L}_{\mathsf{T}}\bra{\vec{\bl{a}}}=0$. 
\begin{itemize}
	\item Since $\text{div}_{\Gamma}\,\bl{a}_1\in H^{-1/2}\bra{\Gamma}$, we can test the bilinear form of \Cref{eq: wedge wedge bilinear form} with $\mathcal{c}_0=\text{div}_{\Gamma}\,\bl{a}_1$, $\bl{c}_1=0$ and $c_2=0$ to find that $\text{div}_{\Gamma}\,\bl{a}_1=0$. 
	
	\item Testing with $\mathcal{c}_0=0$ and $\bl{c}_1=0$ shows that $\bra{\bl{a}_1,\mathbf{curl}_{\Gamma}\,v}_{-1/2} = 0$ $\forall\,v\in H^{1/2}(\Gamma)$. 
	
	\item Because $\text{div}_{\Gamma}\circ\mathbf{curl}_{\Gamma} = 0$, we can choose $c_2=0$, $\mathcal{c}_0=0$ and $\bl{c}_1=\mathbf{curl}_{\Gamma}a_2$ to conclude that $\mathbf{curl}_{\Gamma}a_2=0$.
	\item We are left with $\bra{\mathcal{a}_0,\text{div}_{\Gamma}\bl{v}}_{-1/2}=0$ $\forall\,\mathbf{v}\in\mathbf{H}^{-1/2}(\text{div}_\Gamma,\Gamma)$.
\end{itemize}

In $H^{1/2}\bra{\Gamma}$, $\ker\bra{\mathbf{curl}_{\Gamma}}=\ker\bra{\nabla_{\Gamma}}$ is the space of functions $\mathcal{C}\bra{\Gamma}$ that are constant over connected components of $\Gamma$. Defining $\bl{\Psi}_{t}:= \gamma_{t}\bl{\Psi}$, we have found that
\begin{greyFrame}
	\begin{multline}\label{eq:ker wedge wedge}
	\ker\bra{\gamma_{\mathsf{T}}\mathcal{L}_{\mathsf{T}}} =\\ \left\{\vec{\bl{a}}\in\mathcal{H}_{\mathsf{R}}\,\,\Big\vert\,\,a_0\in\mathcal{C}\bra{\Gamma},\,\text{curl}_{\Gamma}\bm{\Psi}_t(\bl{a}_1)=0,\,\text{div}_{\Gamma}\bl{a}_1=0,\,\nabla_{\Gamma}\psi\bra{a_0'}=0\right\}.
	\end{multline}
\end{greyFrame}

Now, suppose that $\vec{\bl{b}}\in\mathcal{H}_{\mathsf{T}}$ is such that $\gamma_{\mathsf{R}}\mathcal{L}_{\mathsf{R}}(\vec{\bl{b}})=0$. 
\begin{itemize}
	\item As $\text{curl}_{\Gamma}\bra{\bl{b}_1}\in H^{-1/2}\bra{\Gamma}$, we may test \Cref{eq: vee vee bilinear form} with $\mathcal{d}_2=\text{curl}_{\Gamma}\,\bl{b}_1$, $\bl{d}_1=0$ and $d_0=0$ to find that $\text{curl}_{\Gamma}\,\bl{b}_1=0$. 
	
	\item Testing with $\mathcal{d}_2=0$ and $\bl{d}_1=0$, we find that $\bra{\mathbf{n}\times\bl{b}_1,\mathbf{curl}_{\Gamma}\,v}_{-1/2} = 0$ for all $v\in H^{1/2}(\Gamma)$. 
	
	\item Since $\text{curl}_{\Gamma}\circ\nabla_{\Gamma} = 0$, we can choose $d_0=0$, $\mathcal{d}_2=0$ and $\bl{d}_1=\nabla_{\Gamma}b_0$ to conclude that $\mathbf{curl}_{\Gamma}\,b_0=0$.
	\item Finally, it follows that $\bra{\mathcal{b}_2,\text{curl}_{\Gamma}\,\bl{v}}_{-1/2}=0$ for all $\mathbf{v}\in\mathbf{H}^{-1/2}(\text{curl}_\Gamma,\Gamma)$.
\end{itemize}
Notice that since $\nabla_{\Gamma}\bra{v}$ is tangential for all $v\in H^{1/2}\bra{\Gamma}$,
\begin{equation*}
\bra{\mathbf{n}\times\bl{b}_1,\mathbf{curl}_{\Gamma}\,v}_{-1/2} = \bra{\mathbf{n}\times\bl{b}_1,\nabla_{\Gamma}v\times\mathbf{n}}_{-1/2}
=\langle \mathbf{n}\times\bl{\Psi}\bra{\mathbf{n}\times\bl{b}_1},\nabla_{\Gamma}v\rangle
\end{equation*}
for all $v\in H^{1/2}\bra{\Gamma}$. Therefore, we let $\bl{\Psi}_{\tau}\bra{\cdot}:= -\gamma_{\tau}\bl{\Psi}\bra{\bl{n}\times \cdot}$ and conclude that
\begin{greyFrame}
	\begin{multline}\label{eq:ker vee vee}
	\ker\bra{\gamma_{\mathsf{R}}\mathcal{L}_{\mathsf{R}}} =\\ \left\{\vec{\bl{b}}\in\mathcal{H}_{\mathsf{T}}\,\Big\vert\,\, b_0\in\mathcal{C}\bra{\Gamma},\text{curl}_{\Gamma}\,\bl{b}_1=0,\text{div}_{\Gamma}\bl{\Psi}_{\tau}\bra{\bl{b}_1}=0,\mathbf{curl}_{\Gamma}\psi\bra{b_0'}=0\right\}.
	\end{multline}
\end{greyFrame}

\Cref{eq:ker wedge wedge} and \Cref{eq:ker vee vee} together with the mapping properties of the scalar and vector single layer potentials allow us to determine as in \cite[Sec. 7.2]{claeys2017first} and \cite[Lem. 2, Lem. 6]{claeys2018first} that the dimension of these nullspaces relate to the Betti numbers of $\Gamma$.
\begin{greyFrame}
	\begin{proposition}
		The dimensions of $\ker\bra{\gamma_{\mathsf{T}}\mathcal{L}_{\mathsf{T}}}$ and $\ker\bra{\gamma_{\mathsf{R}}\mathcal{L}_{\mathsf{R}}}$ are finite and equal to the sum of the Betti numbers $\beta_0\bra{\Gamma}+\beta_1\bra{\Gamma} + \beta_2\bra{\Gamma}$.
	\end{proposition}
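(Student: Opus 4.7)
I would work directly from the explicit characterizations \eqref{eq:ker wedge wedge} and \eqref{eq:ker vee vee}, whose three defining conditions decouple across the three components $(\mathcal{a}_0,\bl{a}_1,a_2)$ respectively $(b_0,\bl{b}_1,\mathcal{b}_2)$ of the kernel. The strategy is to carry out three independent dimension counts and sum them.

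The outer scalar conditions are routine. In \eqref{eq:ker wedge wedge}, the requirement $a_2\in\mathcal{C}(\Gamma)$ singles out the locally constant $H^{1/2}(\Gamma)$-functions and contributes exactly $\beta_0(\Gamma)$ dimensions. The remaining condition $\nabla_{\Gamma}\psi(\mathcal{a}_0)=0$ forces $\psi(\mathcal{a}_0)\in\mathcal{C}(\Gamma)$; invoking that the classical scalar single-layer $\psi:H^{-1/2}(\Gamma)\to H^{1/2}(\Gamma)$ is a topological isomorphism in 3D (coercivity), this pins $\mathcal{a}_0$ to the finite-dimensional subspace $\psi^{-1}(\mathcal{C}(\Gamma))$ of dimension $\beta_0(\Gamma)=\beta_2(\Gamma)$, using Poincar\'e duality for the closed orientable Lipschitz surface $\Gamma$. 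The analogous scalar conditions on $b_0$ and $\mathcal{b}_2$ in \eqref{eq:ker vee vee} are handled symmetrically and contribute the same dimensions.

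The substantive step treats the middle tangential component. For $\bl{a}_1\in\bl{H}^{-1/2}(\text{div}_{\Gamma},\Gamma)$ I would invoke the surface Hodge decomposition on the closed Lipschitz surface,
\[
\bl{H}^{-1/2}(\text{div}_{\Gamma},\Gamma) = \bl{curl}_{\Gamma} H^{1/2}_{*}(\Gamma)\;\oplus\;\nabla_{\Gamma} H^{1/2}_{*}(\Gamma)\;\oplus\;\mathcal{K}(\Gamma),
\]
where $\mathcal{K}(\Gamma)$ is the finite-dimensional space of simultaneously surface-divergence-free and surface-curl-free tangential fields, with $\dim\mathcal{K}(\Gamma)=\beta_1(\Gamma)$ by surface de Rham cohomology. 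The constraint $\text{div}_{\Gamma}\bl{a}_1=0$ eliminates the $\nabla_{\Gamma}$ summand, so $\bl{a}_1=\bl{curl}_{\Gamma}q+\bl{k}$ with $q\in H^{1/2}_{*}(\Gamma)$ and $\bl{k}\in\mathcal{K}(\Gamma)$. The remaining condition $\text{curl}_{\Gamma}\bm{\Psi}_t(\bl{a}_1)=0$ is attacked by testing the pairing $(\bl{a}_1,\bl{curl}_{\Gamma}v)_{-1/2,\mathsf{T}}=0$ against $v=q$ and exploiting positive definiteness of $(\cdot,\cdot)_{-1/2,\mathsf{T}}$; this forces $\bl{curl}_{\Gamma}q=\bl{0}$, leaving $\bl{a}_1=\bl{k}\in\mathcal{K}(\Gamma)$ and contributing $\beta_1(\Gamma)$ dimensions. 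The corresponding reduction for $\bl{b}_1$ in \eqref{eq:ker vee vee} is obtained by rotating by $\bl{n}\times$ in accordance with the structure of \eqref{eq: vee potential}.

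\textbf{Main obstacle.} The delicate point is the middle component: rigorously disentangling the non-local coupling between the vector single-layer $\bm{\Psi}$ and the surface differential operators $\text{div}_{\Gamma},\text{curl}_{\Gamma}$ so that the two constraints $\text{curl}_{\Gamma}\bm{\Psi}_t(\bl{a}_1)=0$ and $\text{div}_{\Gamma}\bl{a}_1=0$ isolate exactly the finite-dimensional harmonic branch $\mathcal{K}(\Gamma)$ with no residual infinite-dimensional solution branch. This hinges on the injectivity of $\psi$ restricted to $H^{-1/2}_{*}(\Gamma)$ and on the identification of tangential surface-harmonic fields via de Rham cohomology. My plan is to adapt verbatim the machinery of \cite[Sec.~7.2]{claeys2017first} and \cite[Lem.~2, Lem.~6]{claeys2018first}, where precisely this reduction is performed for the first-kind boundary integral operators of the Hodge--Laplacian; the structural parallelism highlighted throughout the paper makes the transfer essentially mechanical, modulo keeping track of the primal/dual bookkeeping induced by the two distinct boundary condition types.
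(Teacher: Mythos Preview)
Your approach is essentially the same as the paper's: the paper itself does not give a self-contained argument but simply invokes the characterizations \eqref{eq:ker wedge wedge}--\eqref{eq:ker vee vee} together with the mapping properties of the single-layer potentials and defers to \cite[Sec.~7.2]{claeys2017first} and \cite[Lem.~2, Lem.~6]{claeys2018first}, exactly as you propose in your final paragraph. Your component-by-component dimension count and the identification of the scalar contributions with $\beta_0(\Gamma)$ and $\beta_2(\Gamma)$ via the isomorphism property of $\psi$ are in line with those references.

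One small gap in your sketch for the tangential component: testing $(\bl{a}_1,\bl{curl}_{\Gamma}v)_{-1/2,\mathsf{T}}=0$ with $v=q$ alone does not force $\bl{curl}_{\Gamma}q=0$, because the cross term $(\bl{k},\bl{curl}_{\Gamma}q)_{-1/2,\mathsf{T}}$ need not vanish---the harmonic space $\mathcal{K}(\Gamma)$ is not a priori orthogonal to $\bl{curl}_{\Gamma}H^{1/2}_*(\Gamma)$ in the \emph{non-local} inner product. The correct statement is that the middle kernel component is the $(\cdot,\cdot)_{-1/2,\mathsf{T}}$-orthogonal complement of $\bl{curl}_{\Gamma}H^{1/2}_*(\Gamma)$ inside $\mathbf{N}(\text{div}_{\Gamma},\Gamma)$; since $\bl{curl}_{\Gamma}$ has closed range (\Cref{lem: properties of surface div and curl}) this complement has dimension equal to the codimension, namely $\dim\mathcal{K}(\Gamma)=\beta_1(\Gamma)$. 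So your dimension count is right, but the assertion that $\bl{a}_1\in\mathcal{K}(\Gamma)$ itself is not quite justified. This is precisely the kind of detail the cited references handle, so your stated plan to adapt them will close the gap.
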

\end{greyFrame}
\begin{remark}
	The zeroth Betti number $\beta_0\bra{\Gamma}$ indicates the number of connected components of $\Gamma$. The first Betti number $\beta_1\bra{\Gamma}$ amounts to the number of equivalence classes of non-bounding cycles in $\Gamma$. For the second Betti number, it holds that $\beta_2\bra{\Gamma}=\beta_2\bra{\Omega^+} + \beta_2\bra{\Omega^-}$, which sums the number of holes in $\Omega^+$ and $\Omega^-$, respectively.
\end{remark}

\section{Surface Dirac operators}\label{surface dirac operators}
In this section, we reveal the geometric structure behind the formulas of the bilinear forms $\mathcal{B}_{\mathsf{R}}$ and $\mathcal{B}_{\mathsf{T}}$ established in \Cref{Sec: boundary integral operators}. They turn out to be associated with the 2D surface Dirac operators induced by the {\color{red}chain} and {\color{blue}cochain} Hilbert complexes
\begin{equation}\label{base complex t}\color{red}
\xymatrix{
	H^{-1/2}\bra{\Gamma} \ar[r]^-{\nabla_{\Gamma}}& \mathbf{H}_\mathsf{T}^{-1/2}\bra{\Gamma} \ar[r]^-{\text{curl}_{\Gamma}}& H^{-1/2}\bra{\Gamma}
}
\end{equation}
and
\begin{equation}\label{base complex tau}\color{blue}
\xymatrix{
	H^{-1/2}\bra{\Gamma}
	&\ar[l]^-{\,\,-\text{div}_{\Gamma}} \mathbf{H}_\mathsf{R}^{-1/2}\bra{\Gamma}
	&\ar[l]^-{\mathbf{curl}_{\Gamma}} H^{-1/2}\bra{\Gamma},
}
\end{equation}
equipped with the non-local inner products \eqref{Dirac scalar prod}, \eqref{Dirac T prod} and \eqref{Dirac R prod}. Their associated domain complexes
\begin{equation}\label{de Rham cochain}\color{red}
\xymatrix{
	H^{1/2}\bra{\Gamma} \ar[r]^-{\nabla_{\Gamma}}& \mathbf{H}^{-1/2}(\text{curl}_\Gamma,\Gamma) \ar[r]^-{\text{curl}_{\Gamma}}& H^{-1/2}\bra{\Gamma}
}
\end{equation}
and
\begin{equation}\label{de Rham chain}\color{blue}
\xymatrix{
	H^{-1/2}\bra{\Gamma}
	&\ar[l]^-{\,\,-\text{div}_{\Gamma}} \mathbf{H}^{-1/2}(\text{div}_\Gamma,\Gamma)
	&\ar[l]^-{\mathbf{curl}_{\Gamma}} H^{1/2}\bra{\Gamma},
}
\end{equation}
are equipped with the natural graph inner products.
\begin{remark}
	Notice that \eqref{de Rham cochain} and \eqref{de Rham chain} are dual to each other with respect to the duality pairing on the boundary introduced in \Cref{Function spaces and traces}.
\end{remark}

The Hilbert space adjoint $\color{blue}\mathbf{d}_{\Gamma}^*$ and $\color{red}\bm{\delta}_{\Gamma}^*$ of the nilpotent operators
\begin{align}
    \color{red}\mathbf{d}_{\Gamma}&\color{red}:\mathcal{H}_{\mathsf{T}}\rightarrow\mathcal{H}_{\mathsf{T}},\\
    \color{blue}\bm{\delta}_{\Gamma}&\color{blue}:\mathcal{H}_{\mathsf{R}}\rightarrow\mathcal{H}_{\mathsf{R}},
\end{align}
represented by the block operator matrices
	\begin{align*}
	&{\color{red}\mathbf{d}_{\Gamma}:=
    \begin{pmatrix}0 & \bl{0}^\top &0\\
    \surfgrad & \bl{0}_{3\times 3} & \bl{0} \\
    0 & \surfcurl & 0\end{pmatrix}} &\text{and}&
	&{\color{blue}\bm{\delta}_{\Gamma}:=
   \begin{pmatrix}
   0 & -\text{div}_{\Gamma} &0\\
    \bl{0}& \bl{0}_{3\times 3} & \bf{curl}_{\Gamma} \\
    0 & \bl{0}^\top & 0\end{pmatrix}
    }
	\end{align*}
\noindent are \emph{non-local} operators. 

In terms of variational formulations, the bilinear forms associated with the surface Dirac operators
\begin{align}
    \color{red}\mathsf{D}^{\Gamma}_{\mathsf{R}}&\color{red}:=\mathbf{d}_{\Gamma} + \mathbf{d}_{\Gamma}^*\\
    \color{blue}\mathsf{D}^{\Gamma}_{\mathsf{T}}&\color{blue}:=\bm{\delta}_{\Gamma} + \bm{\delta}_{\Gamma}^*
\end{align}
are precisely $\color{red}\mathcal{B}_{\mathsf{R}}$ and $\color{blue}\mathcal{B}_{\mathsf{T}}$ defined in \eqref{bilinear form Dirac R} and \eqref{bilinear form Dirac T}, previously associated to the boundary integral operators $\color{red}\gamma_{\mathsf{R}}\mathcal{L}_{\mathsf{R}}$ and $\color{blue}\gamma_{\mathsf{T}}\mathcal{L}_{\mathsf{T}}$:
\begin{greyFrame}
	\begin{align}\label{bilinear T is Dirac T}
	\begin{split}
	\bra{{\color{red}\mathsf{D}^{\Gamma}_{\mathsf{R}}}\,\vec{\bl{b}},\vec{\bl{d}}}_{\mathcal{H}_\mathsf{T}} &=\bra{\mathbf{d}_{\Gamma}\vec{\bl{b}},\vec{\bl{d}}}_{\mathcal{H}_\mathsf{T}} 
	+ \bra{\vec{\bl{b}},\mathbf{d}_{\Gamma}\vec{\bl{d}}}_{\mathcal{H}_\mathsf{T}} \\
	&=\left(\nabla_{\Gamma}\,b_0,\mathbf{d}_1\right)_{-1/2,\mathsf{R}}
	+\left(\text{curl}_{\Gamma}\bl{b}_1,\mathsf{d}_2\right)_{-1/2}\\
	&\qquad\qquad+\left(\mathbf{b}_1,\nabla_{\Gamma}\,d_0\right)_{-1/2,\mathsf{R}}
	+\left( \mathsf{b}_2,\text{curl}_{\Gamma}\bl{d}_1\right)_{-1/2}\\
	&=\mathcal{B}_{\mathsf{R}}\bra{\vec{\bl{b}},\vec{\bl{d}}},
	\end{split}
	\end{align}
\end{greyFrame}
\noindent and similarly
\begin{greyFrame}
	\begin{align}
	\begin{split}\label{bilinear R is Dirac R}
	\bra{{\color{blue}\mathsf{D}^{\Gamma}_{\mathsf{T}}}\,\vec{\bl{a}},\vec{\bl{c}}}_{\mathcal{H}_\mathsf{R}}
	&=\bra{\bm{\delta}_{\Gamma}\vec{\bl{a}},\vec{\bl{c}}}_{\mathcal{H}_\mathsf{R}} 
	+ \bra{\vec{\bl{a}},\bm{\delta}_{\Gamma}\vec{\bl{c}}}_{\mathcal{H}_\mathsf{R}} \\
	&= \bra{-\text{div}_{\Gamma}\,\bl{a}_1,\mathcal{c}_0}_{-1/2}   +\bra{\mathcal{a}_0,-\text{div}_{\Gamma}\,\bl{c}_1}_{-1/2}\\
	&\qquad\qquad +\bra{\bl{curl}_{\Gamma}\,a_2,\bl{c}_1}_{-1/2,\mathsf{T}} 
	+\bra{\bl{a}_1,\bl{curl}_{\Gamma}c_2}_{-1/2,\mathsf{T}}\\
	&=\mathcal{B}_{\mathsf{T}}\bra{\vec{\bl{a}},\vec{\bl{c}}}.
	\end{split}
	\end{align}
\end{greyFrame}

\emph{First-kind boundary integral operators spawned by the (volume) Dirac operators in 3D Euclidean space thus coincide with (surface) Dirac operators on 2D boundaries:} boundary value problems related to $\color{red}\mathsf{D}^{\Omega}_{\mathsf{R}}=\bl{d}+\bl{d}^*$ in $\Omega$ can be formulated as problems for $\color{red}\mathsf{D}^{\Gamma}_{\mathsf{R}}=\mathbf{d}_{\Gamma}+\mathbf{d}_{\Gamma}^*$ in $\Gamma$, and similarly problems for $\color{blue}\mathsf{D}^{\Omega}_{\mathsf{T}}=\bm{\delta}+\bm{\delta}^*$ in $\Omega$ correspond to problems for $\color{blue}\mathsf{D}^{\Gamma}_{\mathsf{T}}=\bm{\delta}_{\Gamma} + \bm{\delta}_{\Gamma}^*$ in $\Gamma$.

This explains why the dimension of the nullspaces of first-kind boundary integral operators is the sum of the dimensions of the standard spaces of surface harmonic scalar and vector fields.

\section{Solvability}
Thanks to the duality between the trace spaces, \eqref{pb: BT} and \eqref{pb: BR} can be reformulated into the variational problems:
\begin{align}\label{pb: BVT}
&\color{blue}\veca\in\HR: &\color{blue}\mathcal{B}_{\mathsf{T}}\bra{\vec{\bl{a}},\vec{\bl{c}}}= \ell_{\mathsf{T}}(\vecc), &&\color{blue}\forall\,\vecc\in\HR,\tag{BV\textsf{T}}
\end{align}
and
\begin{align}\label{pb: BVR}
&\color{red}\vecb\in\HT: &\color{red}\mathcal{B}_{\mathsf{R}}\bra{\vec{\bl{b}},\vec{\bl{d}}}= \ell_{\mathsf{R}}(\vecd), &&\color{red}\forall\,\vecd\in\HT,\tag{BV\textsf{R}}
\end{align}
with right-hand side functionals
\begin{equation}\label{eq: right hand side ell T}
\color{blue}
    \ell_{\mathsf{T}}(\vecc) = \llangle \frac{1}{2}\vecb - \{\traceT\}\LR(\vecb),\vecc\rrangle_{\Gamma}
\end{equation}
and
\begin{equation}\label{eq: right hand side ell R}
\color{red}
    \ell_{\mathsf{R}}(\vecd) = \llangle \frac{1}{2}\veca - \{\traceR\}\LT(\veca),\vecd\rrangle_{\Gamma}.
\end{equation}

As explained in \Cref{sec: Duality and symmetry}, it is sufficient when it comes to well-posedness to restrict our considerations to only one of the two boundary integral equations stated in \Cref{Sec: boundary integral operators}. The following result makes explicit the condition under which a solution to \eqref{pb: BVR} exists.
\begin{proposition}\label{lem: compatibility rhs T}
If the boundary data $\veca\in\HR$ satisfies the compatibility condition \eqref{CCR}, then the right-hand side $\ell\in\HT'$ of \eqref{pb: BVR} is consistent in the sense that
\begin{equation}
    \ell_{\mathsf{R}}(\vecd) = 0, \qquad\quad\forall\,\vecd\in\ker\biT.
\end{equation}
\end{proposition}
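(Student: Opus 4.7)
The plan is to build, out of $\vecd$, a harmonic vector field in $\mathfrak{H}_{\mathsf{R}}(\Omega^-)$ to which hypothesis \eqref{CCR} can be applied, and then close the computation with a two-sided Green's identity. I read the statement's ``$\ker\biT$'' as $\ker\biR\subset\mathcal{H}_{\mathsf{T}}$, since $\ell_{\mathsf{R}}\in\mathcal{H}_{\mathsf{T}}'$ and consistency for the symmetric form $\mathcal{B}_{\mathsf{R}}$ is what must be tested.

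First, set $\vecV := \mathcal{L}_{\mathsf{R}}(\vecd)$. By \Cref{lem: Dirac of potential vanish}, $\mathsf{D}\vecV = \vec{\bl{0}}$ in $\mathbb{R}^3\setminus\Gamma$, and \Cref{lem: jump relations} yields $[\gamma_{\mathsf{R}}]\vecV = \vec{\bl{0}}$ and $[\gamma_{\mathsf{T}}]\vecV = \vecd$. Since $\vecd\in\ker\biR$, the duality $\mathcal{H}_{\mathsf{R}}=\mathcal{H}_{\mathsf{T}}'$ forces $\gamma_{\mathsf{R}}\mathcal{L}_{\mathsf{R}}(\vecd)=\vec{\bl{0}}$, hence $\{\gamma_{\mathsf{R}}\}\vecV = \vec{\bl{0}}$; together with the vanishing jump this gives $\gamma_{\mathsf{R}}^{\pm}\vecV = \vec{\bl{0}}$, so in particular $\vecV|_{\Omega^-}\in\mathfrak{H}_{\mathsf{R}}(\Omega^-)$. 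Testing \eqref{CCR} against this field and decomposing $\gamma_{\mathsf{T}}^- = \{\gamma_{\mathsf{T}}\} + \tfrac12[\gamma_{\mathsf{T}}]$ produces the first key identity
\begin{equation*}
\tfrac12\llangle\veca,\vecd\rrangle_{\Gamma} \;=\; -\,\llangle\veca,\{\gamma_{\mathsf{T}}\}\mathcal{L}_{\mathsf{R}}(\vecd)\rrangle_{\Gamma}. \tag{$\ast$}
\end{equation*}

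Next, apply \Cref{cor. second green identity} to the pair $\vecU := \mathcal{L}_{\mathsf{T}}(\veca)$ and $\vecV := \mathcal{L}_{\mathsf{R}}(\vecd)$ separately on $\Omega^-$ and on $\Omega^+$. Both potentials are Dirac-harmonic off $\Gamma$, so only boundary terms survive. Subtracting the two identities and expanding each boundary pairing by the algebraic rule $a^-b^- - a^+b^+ = [a]\{b\}+\{a\}[b]$, then substituting the jumps $[\gamma_{\mathsf{T}}]\vecU=0,\;[\gamma_{\mathsf{R}}]\vecU=\veca,\;[\gamma_{\mathsf{T}}]\vecV=\vecd,\;[\gamma_{\mathsf{R}}]\vecV=0$, collapses the result to the adjoint-type identity
\begin{equation*}
\llangle\veca,\{\gamma_{\mathsf{T}}\}\mathcal{L}_{\mathsf{R}}(\vecd)\rrangle_{\Gamma} + \llangle\{\gamma_{\mathsf{R}}\}\mathcal{L}_{\mathsf{T}}(\veca),\vecd\rrangle_{\Gamma} = 0. \tag{$\ast\ast$}
\end{equation*}
Plugging $(\ast)$ into the definition \eqref{eq: right hand side ell R} of $\ell_{\mathsf{R}}(\vecd)$ and invoking $(\ast\ast)$ then gives $\ell_{\mathsf{R}}(\vecd) = 0$ in one line.

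The only delicate point is justifying Green's formula on the unbounded $\Omega^+$. The $|\bl{z}|^{-2}$ decay of $\nabla G$ recorded in the proof of \Cref{lem: mapping properties potentials} places both potentials in $(L^2(\mathbb{R}^3))^8$ and fulfills the uniform decay hypothesis of \Cref{lem: decay condition M=0}, so a standard truncation on balls $B_{\rho}$ with $\rho\to\infty$ makes the $\partial B_{\rho}$ contributions vanish. No other obstacle is anticipated; everything else is direct substitution of the jump relations.
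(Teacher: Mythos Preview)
Your proof is correct and takes a genuinely different route from the paper's. The paper expands $\ell_{\mathsf{R}}(\vecd)$ component-wise through the explicit formula \eqref{eq: wedge potential}, recognises the scalar double layer $\mathsf{K}$ and the Maxwell double layer $\mathsf{C}$, and then kills each of the four resulting terms separately, invoking external results from \cite{claeys2018first} for the last two. Your argument instead stays at the level of the abstract potentials: you manufacture a harmonic field $\mathcal{L}_{\mathsf{R}}(\vecd)\in\mathfrak{H}_{\mathsf{R}}(\Omega^-)$ so that \eqref{CCR} yields $(\ast)$, and derive the adjointness relation $(\ast\ast)$ between $\{\gamma_{\mathsf{R}}\}\mathcal{L}_{\mathsf{T}}$ and $\{\gamma_{\mathsf{T}}\}\mathcal{L}_{\mathsf{R}}$ directly from the two-sided Green identity of \Cref{cor. second green identity} and the jump relations. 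This is more self-contained (no appeal to \cite{claeys2018first}), conceptually cleaner, and would generalise verbatim to any operator fitting the Costabel framework, whereas the paper's proof leans on the specific classical potentials appearing in \eqref{eq: wedge potential}. Your reading of ``$\ker\mathcal{B}_{\mathsf{T}}$'' as $\ker\mathcal{B}_{\mathsf{R}}$ is correct and matches what the paper's own proof actually uses (it invokes the characterisation \eqref{eq:ker vee vee}); this is a typo in the statement. The exterior Green formula is indeed the only point requiring care, and your truncation argument, backed by the $L^2$ mapping property in \Cref{lem: mapping properties potentials} and the decay in \Cref{lem: decay condition M=0}, is adequate.
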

\begin{proof}
Following the strategy found in the proofs of \cite[Lem. 4]{claeys2018first} and \cite[Lem. 8]{claeys2018first}, we use \eqref{eq: wedge potential} to directly evaluate
\begin{align*}
\ell_{\mathsf{R}}(\vecd) &= \llangle \frac{1}{2}\veca - \{\traceR\}\LT(\veca),\vecd\rrangle_{\Gamma}\\
&= \llangle \frac{1}{2}\veca,\vecd\rrangle_{\Gamma}-\langle\{\gamma_n\}\,\curl \Upsilon (a_2), d_0\rangle_{\Gamma}+\langle \mathsf{K}'(a_0), d_0\rangle_{\Gamma} \\ &\qquad\qquad\qquad\qquad\qquad\qquad\qquad\qquad\qquad+
\langle \bl{a}_1, \mathsf{C}(\bl{b}_1)\rangle_{\Gamma}-\langle \mathsf{K}(a_2), \mathcal{b}_2\rangle_{\Gamma}\\
&=\langle (\frac{1}{2}\id-\mathsf{K}')a_0, d_0\rangle_{\Gamma} + \langle\{\gamma_n\}\,\curl \Upsilon (a_2), d_0\rangle_{\Gamma}\\ &\qquad\qquad\qquad\qquad\qquad\qquad\qquad+
\langle \bl{a}_1, (\frac{1}{2}\id+\mathsf{C})\bl{b}_1\rangle_{\Gamma}+\langle (\frac{1}{2}\id+\mathsf{K})a_2, \mathcal{b}_2\rangle_{\Gamma},
\end{align*}
where we recognize the ``Maxwell double layer boundary integral operator" $\mathsf{C}$, and the double layer boundary integral operator $\mathsf{K}$ for the Laplacian.

Locally constant functions are trivially harmonic. They can thus be written using the classical representation formula for the scalar Laplacian in which the Neumann trace vanishes to yield $d_0 = \gamma\,(\frac{1}{2}\id-\mathsf{K})d_0$. Since $\mathsf{K}$ is dual to $\mathsf{K}'$, the first term on the right-hand side vanishes because of the compatibility condition $\eqref{CCR}$.

The second term also evaluates to zero. On the one hand, $\ker\surfcurlbl = \ker\surfgrad$. On the other hand, $\gamma_n\,\curl= \surfcurl\,\gamma_t$ in $\Hcurl$, and $\surfcurl$ is dual to $\surfcurlbl$.

The third and fourth terms are shown to vanish in \cite[Lem.4]{claeys2018first} and \cite[Lem.3]{claeys2018first} with similar arguments.
\end{proof}

In the framwork of \Cref{surface dirac operators}, a standard result is the Poincar\'e inequality: $\exists\,C>0$, only depending on $\Gamma$, such that \cite{leopardi2016abstract,arnold2018finite}
\begin{align}\label{eq: Poincare inequality}
    \norm{\vecb}_{\HR} \leq C \norm{\bl{d}_{\Gamma}\vecb}_{\HR},\qquad\qquad \forall\, \vecb\in\mathfrak{K},
\end{align}
where $\mathfrak{K}:=\left(\ker\mathbf{d}_{\Gamma}\right)^{\perp}\cap\text{dom}(\bl{d}_{\Gamma})$ and orthogonality is taken in the non-local inner products introduced in \Cref{Sec: boundary integral operators}. From the complex \eqref{de Rham cochain},
\begin{equation}
    \text{dom}(\bl{d}_{\Gamma}) = H^{1/2}\bra{\Gamma}\times \mathbf{H}^{-1/2}(\text{curl}_\Gamma,\Gamma)
    \times H^{-1/2}\bra{\Gamma},
\end{equation}
and thus
\begin{equation}
\mathfrak{K}= \mathfrak{K}_{0}\times\mathfrak{K}_{1}\times\mathfrak{K}_{2} \in\HT
\end{equation}
with
\begin{align*}
    \mathfrak{K}_{0} 
    := \ker\surfgrad,
    &&\mathfrak{K}_{1} 
    :=\ker\surfcurl\cap\left(\surfgrad\Hhalf\right)^{\perp},
    &&\mathfrak{K}_{2} 
    :=\left(\surfcurl\,\Hhalfcurl\right)^{\perp}.\label{eq: KT2 def}
\end{align*}
It is routine to verify from \eqref{eq:ker vee vee} that $\mathfrak{K}=\ker\mathcal{B}_{\mathsf{R}}$. Hence, due to the inf-sup inequality supplied in \cite[Thm. 2.4]{leopardi2016abstract}, the problem of finding $\vecb\in\HT$ and $\vec{\bl{p}}\in\mathfrak{K}$ such that
\begin{align}
\begin{split}\label{pb: MBVR}
    \color{red}\mathcal{B}_{\mathsf{R}}\left(\vecb,\vecd\right) + \llangle\vec{\bl{p}},\vecd\rrangle_{\Gamma} &\color{red}= \ell_{\mathsf{R}}(\vecd)  \qquad\,\,\forall\,\vecd\in\HT,\\
    \color{red}\llangle\vec{\bl{b}},\vec{\bl{g}}\rrangle_{\Gamma}&\color{red}= 0 \,\qquad\qquad\forall\,\vec{\bl{g}}\in\ker\mathcal{B}_{\mathsf{R}}
    \end{split}\tag{MBV\textsf{R}}
\end{align}
is well-posed.

Similarly, the problem of solving
\begin{align}
\begin{split}\label{pb: MBVT}
    \color{blue}\mathcal{B}_{\mathsf{T}}\left(\veca,\vecc\right) + \llangle\vec{\bl{q}},\vecc\rrangle_{\Gamma} &\color{blue}= \ell_{\mathsf{T}}(\vecc),  \qquad\,\,\forall\,\vecc\in\HT,\\
    \color{blue}\llangle\vec{\bl{a}},\vec{\bl{g}}\rrangle_{\Gamma}&\color{blue}= 0, \,\qquad\qquad\forall\,\vec{\bl{g}}\in\ker\mathcal{B}_{\mathsf{T}}
    \end{split}\tag{MBV\textsf{T}}
\end{align}
for the unknown pair $(\veca,\vec{\bl{q}})\in\HR\times\ker\mathcal{B}_{\mathsf{T}}$ is well-posed.

\begin{theorem}
The mixed variational problems \eqref{pb: MBVR} has a unique solution $\vecb\in\HT$ such that $\vecb\perp\ker\mathcal{B}_{\mathsf{R}}$. Moreover,
\begin{equation}\label{eq: stability continuous BVR}
\norm{\vecb}_{-1/2}+ \norm{\vec{\bl{p}}}_{-1/2}\lesssim \norm{ \frac{1}{2}\veca - \{\traceR\}\LT(\veca)}_{\HR},
\end{equation}
    where the constant depends only on the constant in the Poincar\'e inequality \eqref{eq: Poincare inequality}. If $\veca$ satisfies \eqref{CCR}, then this result extends to the variational problem \eqref{pb: BVR} and \eqref{eq: stability continuous BVR} holds with $\vec{\bl{p}}=0$.
    
    Similarly, the mixed variational problems \eqref{pb: MBVT} has a unique solution $\veca\in\HR$ such that $\veca\perp\ker\mathcal{B}_{\mathsf{T}}$. Moreover,
\begin{equation}\label{eq: stability continuous BVT}
\norm{\veca}_{-1/2}+ \norm{\vec{\bl{q}}}_{-1/2}\lesssim \norm{ \frac{1}{2}\vecb - \{\traceT\}\LR(\vecb)}_{\HT},
\end{equation}
    where the constant depends only on the constant in the Poincar\'e inequality for $\bm{\delta}_{\Gamma}$. If $\vecb$ satisfies \eqref{CCT}, then this result extends to the variational problem \eqref{pb: BVT} and \eqref{eq: stability continuous BVT} holds with $\vec{\bl{q}}=0$.
\end{theorem}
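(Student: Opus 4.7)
The plan is to recognise the two mixed variational problems as abstract mixed Hodge--Dirac problems on Hilbert complexes and to invoke the abstract well-posedness theorem \cite[Thm. 2.4]{leopardi2016abstract}. The decisive structural identification has already been made in \Cref{surface dirac operators}: by \eqref{bilinear T is Dirac T}--\eqref{bilinear R is Dirac R}, the symmetric forms $\mathcal{B}_{\mathsf{R}}$ and $\mathcal{B}_{\mathsf{T}}$ coincide with those induced by the surface Dirac operators $\mathsf{D}^{\Gamma}_{\mathsf{R}}=\mathbf{d}_{\Gamma}+\mathbf{d}_{\Gamma}^{*}$ and $\mathsf{D}^{\Gamma}_{\mathsf{T}}=\bm{\delta}_{\Gamma}+\bm{\delta}_{\Gamma}^{*}$ on the surface domain complexes \eqref{de Rham cochain}--\eqref{de Rham chain}, equipped with the non-local inner products \eqref{Dirac scalar prod}--\eqref{Dirac R prod}. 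Since those inner products induce norms equivalent to the standard ones on the trace spaces (cf.\ \Cref{sec: T-coercivity} and \cite[Sec. 4.1]{buffa2002maxwell}), the resulting bounded Hilbert complex shares its domain with the usual surface de Rham complex, and the Poincar\'e-type estimate \eqref{eq: Poincare inequality} supplies the required closed range property.

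Granting this identification, the abstract inf--sup theorem directly produces a unique pair $(\vec{\bl{b}},\vec{\bl{p}})\in\HT\times\mathfrak{K}$ solving \eqref{pb: MBVR}, and the inf--sup stability estimate tracked through its proof yields \eqref{eq: stability continuous BVR} with a constant depending only on the Poincar\'e constant for $\mathbf{d}_{\Gamma}$, which is the unique problem-dependent input in the abstract argument. The constraint $\llangle\vec{\bl{b}},\vec{\bl{g}}\rrangle_{\Gamma}=0$ for all $\vec{\bl{g}}\in\ker\mathcal{B}_{\mathsf{R}}$ is precisely the orthogonality condition $\vec{\bl{b}}\perp\ker\mathcal{B}_{\mathsf{R}}$. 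The analogous statement for \eqref{pb: MBVT} follows either by repeating the same argument with the dual complex, or, more economically, by transporting the $\mathsf{R}$-result through the isomorphism $\Xi$ of \Cref{sec: Duality and symmetry}, which identifies \eqref{pb: BVT} with a problem of type \eqref{pb: BVR}.

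To pass from the mixed to the non-mixed problem \eqref{pb: BVR}, I would use \Cref{lem: compatibility rhs T}. Assume the datum $\vec{\bl{a}}$ satisfies \eqref{CCR}; the proposition then ensures $\ell_{\mathsf{R}}(\vec{\bl{g}})=0$ for every $\vec{\bl{g}}\in\ker\mathcal{B}_{\mathsf{R}}=\mathfrak{K}$. Testing the first equation of \eqref{pb: MBVR} with $\vec{\bl{d}}=\vec{\bl{p}}\in\mathfrak{K}$ and using that $\mathcal{B}_{\mathsf{R}}(\vec{\bl{b}},\vec{\bl{p}})=\mathcal{B}_{\mathsf{R}}(\vec{\bl{p}},\vec{\bl{b}})=0$ by symmetry of $\mathcal{B}_{\mathsf{R}}$ and $\vec{\bl{p}}\in\ker\mathcal{B}_{\mathsf{R}}$ leaves $\llangle\vec{\bl{p}},\vec{\bl{p}}\rrangle_{\Gamma}=0$. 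On $\mathfrak{K}\subset\HT$ this pairing reduces to the positive definite non-local inner product of the underlying complex, so the multiplier must vanish, $\vec{\bl{p}}=\vec{\bl{0}}$. Hence $\vec{\bl{b}}$ solves \eqref{pb: BVR} and \eqref{eq: stability continuous BVR} holds with $\vec{\bl{p}}=0$. The $\mathsf{T}$-case is treated identically after swapping the roles of \eqref{CCR}/\eqref{CCT} and of the two surface complexes.

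The main obstacle I foresee is verifying rigorously that the algebraic kernel $\ker\mathcal{B}_{\mathsf{R}}$ characterised by \eqref{eq:ker vee vee} coincides with the abstract harmonic space $\mathfrak{K}=(\ker\mathbf{d}_{\Gamma})^{\perp}\cap\mathrm{dom}(\bl{d}_{\Gamma})$ intrinsic to the surface Hodge--Dirac complex (and analogously for $\mathcal{B}_{\mathsf{T}}$), so that the abstract inf--sup inequality indeed controls the norms appearing in \eqref{eq: stability continuous BVR} and \eqref{eq: stability continuous BVT}. This identification is essentially implicit in \Cref{surface dirac operators}, but its explicit verification is where the surface de Rham structure of $\Gamma$, the Hodge decomposition in the non-local inner products, and the characterisations of the harmonic fields must be assembled.
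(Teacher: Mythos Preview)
Your proposal is correct and follows essentially the same approach as the paper. The paper does not supply a separate proof environment for this theorem; the argument is the discussion immediately preceding the statement, which invokes the identification \eqref{bilinear T is Dirac T}--\eqref{bilinear R is Dirac R}, the Poincar\'e inequality \eqref{eq: Poincare inequality}, the verification that $\mathfrak{K}=\ker\mathcal{B}_{\mathsf{R}}$, and the abstract inf--sup result \cite[Thm.~2.4]{leopardi2016abstract}, together with \Cref{lem: compatibility rhs T} for the passage to the non-mixed problem---exactly the ingredients you assemble. Your explicit test with $\vec{\bl{d}}=\vec{\bl{p}}$ to force the multiplier to vanish is a useful elaboration the paper leaves implicit, and the technical point you flag at the end (that $\ker\mathcal{B}_{\mathsf{R}}$ coincides with the abstract harmonic space) is precisely what the paper dismisses as ``routine to verify from \eqref{eq:ker vee vee}''.
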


\section{Conclusion}
First-kind boundary integral equations are appealing to the numerical analysis community because they lead to variational problems posed in natural ``energy" trace spaces that are generally well-suited for Galerkin discretization. Therefore, on the one hand, the new equations pave the way for  development of new Galerkin boundary element methods. On the other hand, our results simultaneously open a new perspective towards the recent developments in boundary integral equations for Hodge-Laplace problems. As it stands, the rich theories of Hilbert complexes and nilpotent operators not only support our observations with the help of already established abstract inf-sup conditions, but in fact also supply the framework and analysis tools needed to relate the studied non-standard surface Dirac operators to the mixed variational formulations associated with the first-kind boundary integral operators for the Hodge-Laplacian. In fact, this insight already led us to observe that the variational formulation \cite[Eq. 25]{claeys2018first} is associated with the Laplace-Beltrami of the Hilbert complex \eqref{base complex t}. We note that \cite[Eq. 34]{claeys2018first} also appears to be related to higher-order differential forms on surfaces. The significant observation that our integral operators arise as ``non-standard'' surface Dirac operators associated to trace Hilbert complexes suggests a new analysis of Hodge-Dirac and Hodge-Laplace related first-kind boundary integral equations which has yet to be explored.

\bibliographystyle{siamplain}
\bibliography{bibliography}
\end{document}